\documentclass{article}%
\usepackage{amssymb}
\usepackage{amsmath}
\usepackage{amsfonts}
\usepackage{graphicx}%
\setcounter{MaxMatrixCols}{30}
\providecommand{\U}[1]{\protect\rule{.1in}{.1in}}
\newtheorem{theorem}{Theorem}[section]

\newtheorem{corollary}[theorem]{Corollary}

\newtheorem{lemma}[theorem]{Lemma}

\newtheorem{problem}[theorem]{Problem}
\newtheorem{proposition}[theorem]{Proposition}
\newtheorem{remark}[theorem]{Remark}

\newenvironment{proof}[1][Proof]{\noindent\textbf{#1.} }{\ \rule{0.5em}{0.5em}}
\setlength{\textwidth}{5.3in}
\begin{document}

\author{Vadim E. Levit\\Department of Mathematics\\Ariel University, Israel\\levitv@ariel.ac.il
\and Eugen Mandrescu\\Department of Computer Science\\Holon Institute of Technology, Israel\\eugen\_m@hit.ac.il}
\title{On the Number of Vertices/Edges whose Deletion Preserves the
K\"{o}nig-Egerv\'{a}ry Property}
\date{}
\maketitle

\begin{abstract}
Let $\alpha(G)$ denote the cardinality of a maximum independent set, while
$\mu(G)$ be the size of a maximum matching in $G=\left(  V,E\right)  $. If
$\alpha(G)+\mu(G)=\left\vert V\right\vert $, then $G$ is a
\textit{K\"{o}nig-Egerv\'{a}ry graph}.

The number $d\left(  G\right)  =\max\{\left\vert A\right\vert -\left\vert
N\left(  A\right)  \right\vert :A\subseteq V\}$ is the \textit{critical
difference} of the graph $G$, where $N\left(  A\right)  =\left\{  v:v\in
V,N\left(  v\right)  \cap A\neq\emptyset\right\}  $. Every set $B\subseteq V$
satisfying $d\left(  G\right)  =\left\vert B\right\vert -\left\vert N\left(
B\right)  \right\vert $ is \textit{critical}. Let $\varepsilon\left(
G\right)  =\left\vert \mathrm{\ker}(G)\right\vert $ and $\xi\left(  G\right)
=\left\vert \emph{core}\left(  G\right)  \right\vert $, where $\mathrm{\ker
}(G)$ is the intersection of all critical independent sets, and \emph{core}%
$\left(  G\right)  $ is the intersection of all maximum independent sets. It
is known that $\mathrm{\ker}(G)\subseteq$ \emph{core}$\left(  G\right)  $
holds for every graph.

Let $\varrho_{v}\left(  G\right)  =\left\vert \left\{  v\in V:G-v\text{ is a
K\"{o}nig-Egerv\'{a}ry graph}\right\}  \right\vert $ and

$\varrho_{e}\left(  G\right)  =\left\vert \left\{  e\in E:G-e\text{ is a
K\"{o}nig-Egerv\'{a}ry graph}\right\}  \right\vert $. Clearly, $\varrho
_{v}\left(  G\right)  =\left\vert V\right\vert $ and $\varrho_{e}\left(
G\right)  =\left\vert E\right\vert $ for bipartite graphs. Unlike the
bipartiteness, the property of being a K\"{o}nig-Egerv\'{a}ry graph is not hereditary.

In this paper, we show that%
\[
\varrho_{v}\left(  G\right)  =\left\vert V\right\vert -\xi\left(  G\right)
+\varepsilon\left(  G\right)  \text{ and }\varrho_{e}\left(  G\right)
\geq\left\vert E\right\vert -\xi\left(  G\right)  +\varepsilon\left(
G\right)
\]
for every K\"{o}nig-Egerv\'{a}ry graph $G$.

\textbf{Keywords:} maximum independent set, maximum matching, critical
independent set, K\"{o}nig-Egerv\'{a}ry graph, bipartite graph, $\alpha
$-critical vertex/edge, $\mu$-critical vertex/edge.

\end{abstract}

\section{Introduction}

Throughout this paper $G=(V,E)$ is a finite, undirected, loopless graph
without multiple edges, with vertex set $V=V(G)$ of cardinality $\left\vert
V\left(  G\right)  \right\vert =n\left(  G\right)  $, and edge set $E=E(G)$ of
size $\left\vert E\left(  G\right)  \right\vert =m\left(  G\right)  $.

If $X\subset V$, then $G[X]$ is the subgraph of $G$ induced by $X$. By $G-X$
we mean the subgraph $G[V-X]$, for $X\subseteq V\left(  G\right)  $, and we
write $G-v$ instead of $G-\left\{  v\right\}  $. If $A,B$ $\subseteq V\left(
G\right)  $ and $A\cap B=\emptyset$, then $(A,B)$ stands for the set
$\{e=ab:a\in A,b\in B,e\in E\left(  G\right)  \}$. The \textit{neighborhood}
of a vertex $v\in V\left(  G\right)  $ is the set $N(v)=\{w:w\in V$ and $vw\in
E\}$. The \textit{neighborhood} of $A\subseteq V$ is $N(A)=\{v\in V:N(v)\cap
A\neq\emptyset\}$. If $\left\vert N(A)\right\vert >\left\vert A\right\vert $
for every independent set $A$, then $G$ is a \textit{regularizable
graph}\emph{ }\cite{Berge1982}\emph{.}

A set $S\subseteq V$ is \textit{independent} if no two vertices belonging to
$S$ are adjacent. Let $\mathrm{Ind}(G)$ denote the family off all independent
sets of $G$. The \textit{independence number} $\alpha(G)$ is the size of a
largest independent set (i.e., of a \textit{maximum independent set}) of $G$.
Let $\Omega(G)=\{S:S$ \textit{is a maximum independent set of} $G\}$ and
$\mathrm{core}(G)=\cap\{S:S\in\Omega(G)\}$, while $\xi(G)=\left\vert
\mathrm{core}(G)\right\vert $ \cite{LevMan2002a}. A vertex $v\in V(G)$ is
$\alpha$-\textit{critical }provided $\alpha(G-v)<\alpha(G)$. Clearly,
$\mathrm{core}(G)$ is the set of all $\alpha$-critical vertices of $G$. An
edge $e\in E(G)$ is $\alpha$-\textit{critical }provided $\alpha(G)<\alpha
(G-e)$. Let $\eta\left(  G\right)  $ denote the number of $\alpha$-critical
edges of graph $G$ \cite{LevMan2006}. Notice that there are graphs in which
every edge is $\alpha$-critical (e.g., all $C_{2k+1}$ for $k\geq1$) or no edge
is $\alpha$-critical (e.g., all $C_{2k}$ for $k\geq2$).

A \textit{matching} in a graph $G=(V,E)$ is a set of edges $M\subseteq E$ such
that no two edges of $M$ share a common vertex. A matching of maximum
cardinality $\mu(G)$ is a \textit{maximum matching}, and a \textit{perfect
matching} is one saturating all vertices of $G$. Given a matching $M$ in $G$,
a vertex $v\in V\left(  G\right)  $ is $M$-\textit{saturated} if there exists
an edge $e\in M$ incident with $v$. An edge $e\in E(G)$ is $\mu$%
-\textit{critical }provided $\mu(G-e)<\mu(G)$. A vertex $v\in V(G)$ is $\mu
$-\textit{critical }provided $\mu(G-v)<\mu(G)$, i.e., $v$ is $M$-saturated by
every maximum matching $M$ of $G$.

It is known that
\[
\left\lfloor \frac{n\left(  G\right)  }{2}\right\rfloor +1\leq\alpha
(G)+\mu(G)\leq n\left(  G\right)  \leq\alpha(G)+2\mu(G)
\]
hold for every graph $G$ \cite{BGL2002}. If $\alpha(G)+\mu(G)=n\left(
G\right)  $, then $G$ is called a K\"{o}nig-Egerv\'{a}ry graph\textit{
}\cite{dem,ster}, while if $\alpha(G)+\mu(G)=n\left(  G\right)  -1$, then $G$
is called a $1$-K\"{o}nig-Egerv\'{a}ry graph \cite{LevMan2023}. For instance,
both $G_{1}$ and $G_{2}$ from Figure \ref{Fig123} are $1$%
-K\"{o}nig-Egerv\'{a}ry graphs.\begin{figure}[h]
\setlength{\unitlength}{1cm}\begin{picture}(5,1.2)\thicklines
\multiput(2,0)(1,0){4}{\circle*{0.29}}
\multiput(2,1)(1,0){3}{\circle*{0.29}}
\put(2,0){\line(1,0){3}}
\put(2,0){\line(0,1){1}}
\put(2,0){\line(1,1){1}}
\put(2,0){\line(2,1){2}}
\put(2,1){\line(1,-1){1}}
\put(2,1){\line(2,-1){2}}
\put(2,1){\line(3,-1){3}}
\put(3,0){\line(0,1){1}}
\put(3,0){\line(1,1){1}}
\put(3,1){\line(1,-1){1}}
\put(3,1){\line(2,-1){2}}
\put(4,0){\line(0,1){1}}
\put(4,1){\line(1,-1){1}}
\put(5.4,0){\makebox(0,0){$v_{1}$}}
\put(4.4,1){\makebox(0,0){$v_{2}$}}
\put(1,0.5){\makebox(0,0){$G_{1}$}}
\multiput(8,0)(1,0){5}{\circle*{0.29}}
\multiput(11,1)(1,0){2}{\circle*{0.29}}
\put(8,1){\circle*{0.29}}
\put(8,0){\line(1,0){4}}
\put(8,0){\line(0,1){1}}
\put(8,1){\line(1,-1){1}}
\put(11,1){\line(1,0){1}}
\put(10,0){\line(1,1){1}}
\put(12,0){\line(0,1){1}}
\put(9.5,0.2){\makebox(0,0){$e_{1}$}}
\put(10.55,0.2){\makebox(0,0){$e_{2}$}}
\put(7,0.5){\makebox(0,0){$G_{2}$}}
\end{picture}\caption{$G_{1}-v_{1}$, $G_{2}-e_{2}$ are K\"{o}nig-Egerv\'{a}ry
graphs, while $G_{1}-v_{2}$ and $G_{2}-e_{1}$ are not K\"{o}nig-Egerv\'{a}ry
graphs.}%
\label{Fig123}%
\end{figure}
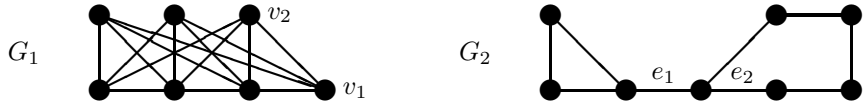

A graph is: \textit{(i)} \textit{unicyclic} if it has a unique cycle
\cite{LevMan2012b}, and \textit{(ii)} \textit{almost bipartite} if it has only
one odd

cycle \cite{LevMan2022}.

\begin{lemma}
\label{lem84}\cite{LevMan2022} If $G$ is an almost bipartite graph, then
$n(G)-1\leq\alpha(G)+\mu(G)\leq n(G)$.
\end{lemma}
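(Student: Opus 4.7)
The plan is to exploit the defining feature of an almost bipartite graph: a single odd cycle $C$. The upper bound $\alpha(G)+\mu(G)\leq n(G)$ is already stated in the introduction as a universal inequality, so no work is needed there. The entire proof reduces to establishing the lower bound $\alpha(G)+\mu(G)\geq n(G)-1$.

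For the lower bound, I would pick any vertex $v\in V(C)$. Since $C$ is the only odd cycle of $G$, the graph $G-v$ contains no odd cycle, hence is bipartite. By the classical K\"{o}nig--Egerv\'{a}ry theorem, every bipartite graph satisfies $\alpha(H)+\mu(H)=n(H)$, so
\[
\alpha(G-v)+\mu(G-v)=n(G-v)=n(G)-1.
\]

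Now I would use the elementary monotonicity facts under vertex deletion: any independent set of $G-v$ is independent in $G$, giving $\alpha(G)\geq\alpha(G-v)$; and any matching of $G-v$ is a matching in $G$, giving $\mu(G)\geq\mu(G-v)$. Adding these two inequalities yields
\[
\alpha(G)+\mu(G)\geq\alpha(G-v)+\mu(G-v)=n(G)-1,
\]
which completes the argument.

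There is no real obstacle here. The only thing to be alert to is justifying that $G-v$ is genuinely bipartite, which follows from the assumption that $C$ is the \emph{unique} odd cycle together with the fact that every odd cycle in $G$ must pass through $v$ (otherwise it would survive in $G-v$ and be a second odd cycle). No appeal to critical independent sets, the \emph{core}, or the kernel is required; the lemma is purely a consequence of K\"{o}nig's theorem applied to a well-chosen vertex deletion.
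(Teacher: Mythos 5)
Your argument is correct and complete: deleting a vertex of the unique odd cycle destroys every odd cycle (any surviving one would be a second odd cycle of $G$), so $G-v$ is bipartite, and the K\"{o}nig--Egerv\'{a}ry equality for $G-v$ combined with the monotonicity of $\alpha$ and $\mu$ under vertex deletion gives the lower bound, while the upper bound holds for all graphs. The paper itself states this lemma without proof, citing \cite{LevMan2022}, so there is no in-paper argument to compare against; your vertex-deletion proof is the natural self-contained derivation.
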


Consequently, one may say that each almost bipartite graph is either a
K\"{o}nig-Egerv\'{a}ry graph or a $1$-K\"{o}nig-Egerv\'{a}ry graph.

For $X\subseteq V(G)$, the number $\left\vert X\right\vert -\left\vert
N(X)\right\vert $ is the \textit{difference} of $X$, denoted $d(X)$. The
\textit{critical difference} $d(G)$ is $\max\left\{  d(I):I\subseteq
V(G)\right\}  =\max\{d(I):I\in\mathrm{Ind}(G)\}$ \cite{Zhang1990}. If
$A\in\mathrm{Ind}(G)$ with $d\left(  X\right)  =d(G)$, then $A$ is a
\textit{critical independent set} \cite{Zhang1990}. For a graph $G$, let
$\mathrm{MaxCritIndep}(G)=\{S:S$ \textit{is a maximum critical independent
set}$\}$ \cite{LevMan2022c}, $\mathrm{\ker}(G)$ be the intersection of all its
critical independent sets and $\varepsilon(G)=|\mathrm{\ker}(G)|$
\cite{LevMan2012a,LevMan2012c}.

\begin{theorem}
\label{th3}\emph{(i)} \cite{ButTruk2007} Each critical independent set is
included in some $S\in\Omega(G)$.

\emph{(ii)} \cite{Larson2007} Every critical independent set is contained in
some $S\in\mathrm{MaxCritIndep}(G)$.

\emph{(iii)} \cite{Larson2007} There is a matching from $N(S)$ into $S$ for
every critical independent set $S$.
\end{theorem}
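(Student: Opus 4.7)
I would prove the three parts in the order (iii), (i), (ii), since (iii) is the main tool. For (iii), I would apply Hall's theorem to the bipartite graph $H$ on classes $N(S)$ and $S$ whose edges are those of $G$ between them. To verify Hall's condition, pick any $T \subseteq N(S)$ and set $S' = S \setminus N_H(T)$, noting $N_H(T) \subseteq S$. Then every vertex of $S'$ sends its $G$-neighbors into $N(S) \setminus T$, so
\[
d(S') \;\geq\; \bigl(|S| - |N_H(T)|\bigr) - \bigl(|N(S)| - |T|\bigr) \;=\; d(S) + |T| - |N_H(T)|.
\]
Since $d(S) = d(G) \geq d(S')$, this forces $|N_H(T)| \geq |T|$, yielding the matching from $N(S)$ into $S$.

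For (i), I would use the matching $M$ from (iii) in an exchange argument. Pick any $T \in \Omega(G)$ and set $T^* = (T \setminus N(S)) \cup S$; this is independent because $T \setminus N(S)$ has no edges to $S$. Each $v \in T \cap N(S)$ is matched by $M$ to some $u_v \in S$, and the edge $u_v v \in E(G)$ together with the independence of $T$ forces $u_v \in S \setminus T$. Injectivity of the matching gives $|T \cap N(S)| \leq |S \setminus T|$, hence $|T^*| \geq |T|$, so $T^* \in \Omega(G)$ and $S \subseteq T^*$.

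For (ii), I would establish the union-closure of critical independent sets: if $A, B$ are critical independent sets, so is $A \cup B$. Criticality of $A \cup B$ follows from the submodular inequality
\[
d(A \cup B) + d(A \cap B) \;\geq\; d(A) + d(B) \;=\; 2 d(G),
\]
obtained from $N(A \cup B) = N(A) \cup N(B)$ and $N(A \cap B) \subseteq N(A) \cap N(B)$, combined with $d(\cdot) \leq d(G)$, which forces equality in both summands. Independence of $A \cup B$ comes from an alternating-path argument on the matchings of (iii) for $A$ and $B$, ruling out any edge between $A \setminus B$ and $B \setminus A$. Then the union of all critical independent sets is the unique maximum critical independent set, trivially containing $S$. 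The main obstacle is part (iii): one must identify the perturbation $S' = S \setminus N_H(T)$ that converts the extremality of $d(S) = d(G)$ into Hall's condition; parts (i) and (ii) are then clean exchange and closure arguments.
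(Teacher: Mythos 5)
The paper offers no proof of Theorem~\ref{th3} to compare against: all three parts are quoted from \cite{ButTruk2007} and \cite{Larson2007}. Judged on their own, your arguments for (iii) and (i) are correct. The perturbation $S'=S\setminus N_H(T)$ does convert the extremality $d(G)=d(S)\geq d(S')$ into Hall's condition $|N_H(T)|\geq|T|$, and the exchange $T^*=(T\setminus N(S))\cup S$, combined with the matching from (iii) to get $|T\cap N(S)|\leq|S\setminus T|$, yields $|T^*|\geq\alpha(G)$ as claimed.

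Part (ii) has a genuine gap: the union of two critical independent sets need not be independent, so the closure property your argument rests on is false. Already in $K_2$ (where $d(K_2)=0$) the two singletons are critical independent sets whose union is an edge; in $C_4$ the sets $\{1,3\}$ and $\{2,4\}$ are both critical and independent, yet their union is all of $V$. The same examples show that maximum critical independent sets are not unique, so the final step ``the union of all critical independent sets is the unique maximum critical independent set'' fails on both counts, and no alternating-path argument can rule out edges between $A\setminus B$ and $B\setminus A$, because such edges genuinely occur. Your supermodularity computation is fine and does show that $A\cup B$ and $A\cap B$ are critical \emph{sets} --- this is exactly the paper's Theorem~\ref{th4}\emph{(ii)} --- but only $A\cap B$ inherits independence. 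To prove (ii) you need a different construction, for instance Larson's: given a critical independent set $J$ and a maximum critical independent set $A$, show that $J\cup(A\setminus N[J])$ is again a critical independent set of size at least $|A|$, using the matching from (iii) to control $|A\cap N[J]|$.
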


\begin{theorem}
\label{th4}\cite{LevMan2012a} For a graph $G$, the following assertions are true:

\emph{(i)} $\mathrm{\ker}(G)\subseteq\emph{core}(G)$;

\emph{(ii)} if $A$ and $B$ are critical in $G$, then $A\cup B$ and $A\cap B$
are critical as well;

\emph{(iii)} $G$ has a unique minimal independent critical set, namely,
$\mathrm{\ker}(G)$.
\end{theorem}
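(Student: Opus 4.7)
The plan is to prove the three parts in the order (ii) $\to$ (iii) $\to$ (i), exploiting (ii) to obtain (iii) and then using the resulting structure of $\mathrm{\ker}(G)$ together with Theorem~\ref{th3} for (i).

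For (ii), I would use the supermodularity of the map $X \mapsto d(X)$. For arbitrary $A, B \subseteq V(G)$, the identity $|A \cup B| + |A \cap B| = |A| + |B|$ combines with $N(A \cup B) = N(A) \cup N(B)$ and $N(A \cap B) \subseteq N(A) \cap N(B)$---hence $|N(A \cup B)| + |N(A \cap B)| \leq |N(A)| + |N(B)|$---to yield
\[
d(A \cup B) + d(A \cap B) \geq d(A) + d(B).
\]
When $A$ and $B$ are critical, the right side equals $2d(G)$; since each summand on the left is $\leq d(G)$, both must attain $d(G)$.

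For (iii), I would apply (ii) inductively to the finite family of critical independent sets of $G$: their intersection $\mathrm{\ker}(G)$ remains critical, and is independent as a subset of any member. Because it sits inside every critical independent set by construction, no proper subset of it can itself be a critical independent set, and any other minimal critical independent set $S$ must contain $\mathrm{\ker}(G)$ and therefore equal it. This gives uniqueness and minimality at once.

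For (i), the delicate part, take $v \in \mathrm{\ker}(G)$ and a maximum independent set $M$. Setting $K := \mathrm{\ker}(G)$ and using Theorem~\ref{th3}(iii) to fix a matching $\phi : N(K) \to K$, I would form $M^{\ast} := (M \setminus N(K)) \cup K$. A routine check shows $M^{\ast}$ is independent (any edge from $K$ into $M \setminus N(K)$ would place the latter endpoint in $N(K)$), and $\phi$ restricts to an injection $M \cap N(K) \hookrightarrow K \setminus M$ (otherwise two adjacent vertices would lie in $M$). A vertex count then yields $|M^{\ast}| \geq |M|$, forcing equality and in particular a perfect matching between $M \cap N(K)$ and $K \setminus M$.

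The remaining---and in my view hardest---step is to upgrade ``$K \subseteq M^{\ast}$'' to ``$K \subseteq M$''. My plan is to exploit the minimality of $K$ from (iii): because no proper independent subset of $K$ can be critical, a short computation of $d(K \setminus \{v\}) = d(G) - 1 + |P(v)|$ (where $P(v) = \{u \in N(K): N(u) \cap K = \{v\}\}$) forces $|P(v)| = 0$ for every $v \in K$, so every $u \in N(K)$ has at least two neighbors inside $K$. Feeding this rigidity into the perfect matching on the bipartite graph $G[(M \cap N(K)) \cup (K \setminus M)]$ should produce---assuming $K \setminus M \neq \emptyset$---an alternating-path modification of $K$ yielding a critical independent set that omits some vertex of $K \setminus M$, contradicting its membership in $\mathrm{\ker}(G)$. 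The main obstacle I anticipate is verifying that this alternating extraction genuinely lands inside the critical set family rather than merely inside $\mathrm{Ind}(G)$; this is where the careful interplay between $\phi$, the absence of private neighbors, and the supermodular inequality from (ii) must be bookkept precisely.
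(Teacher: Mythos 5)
The paper imports Theorem \ref{th4} from \cite{LevMan2012a} without proof, so your argument has to stand on its own. Parts \emph{(ii)} and \emph{(iii)} do: the supermodularity $d(A\cup B)+d(A\cap B)\geq d(A)+d(B)$, obtained from $N(A\cup B)=N(A)\cup N(B)$ and $N(A\cap B)\subseteq N(A)\cap N(B)$, is exactly the right tool (and works for arbitrary, not necessarily independent, critical sets, as the statement requires), and the deduction of \emph{(iii)} from \emph{(ii)} by finite intersection is sound. The setup of \emph{(i)} is also correct as far as it goes: $M^{\ast}=(M\setminus N(K))\cup K$ is independent, the matching of Theorem \ref{th3}\emph{(iii)} restricts to an injection of $M\cap N(K)$ into $K\setminus M$, and maximality of $M$ forces $\left\vert M\cap N(K)\right\vert =\left\vert K\setminus M\right\vert $.

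The gap is the final step of \emph{(i)}, which you yourself flag: the ``alternating-path modification'' that is supposed to produce a critical independent set omitting a vertex of $K\setminus M$ is never constructed, and nothing in the sketch certifies that the modified set attains difference $d(G)$; the private-neighbor computation giving $|P(v)|=0$ is correct but is left dangling, feeding into no completed argument. The step can be closed without alternating paths at all: since every $u\in M\cap N(K)$ lies in the independent set $M$, it has no neighbor in $K\cap M$, so $N(K\cap M)\subseteq N(K)-\left(  M\cap N(K)\right)  $, and hence
\[
d(K\cap M)=\left\vert K\cap M\right\vert -\left\vert N(K\cap M)\right\vert
\geq\left(  \left\vert K\right\vert -\left\vert K\setminus M\right\vert
\right)  -\left(  \left\vert N(K)\right\vert -\left\vert M\cap N(K)\right\vert
\right)  =d(G),
\]
using your equality $\left\vert M\cap N(K)\right\vert =\left\vert K\setminus M\right\vert $. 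Thus $K\cap M$ is a critical independent set, and the minimality of $K=\ker(G)$ established in \emph{(iii)} forces $K\cap M=K$, i.e., $K\subseteq M$ for every $M\in\Omega(G)$. Until something of this kind is written down, part \emph{(i)} is not proved.
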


\begin{theorem}
\label{th22}\cite{LevMan2013c,LevMan2022b} If $G$ is bipartite, or an almost
bipartite non-K\"{o}nig-Egerv\'{a}ry graph, then $\mathrm{\ker}%
(G)=\mathrm{core}(G)$.
\end{theorem}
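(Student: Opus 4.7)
The inclusion $\ker(G) \subseteq \mathrm{core}(G)$ is Theorem \ref{th4}(i); it remains to show $\mathrm{core}(G) \subseteq \ker(G)$. I would split the argument according to the two hypotheses.

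\textbf{Case 1 (bipartite $G$).} For every $S \in \Omega(G)$, K\"{o}nig's theorem gives $N(S) = V(G) \setminus S$ with $|N(S)| = \mu(G)$, so $d(S) = \alpha(G) - \mu(G)$. Extending any critical independent set to a maximum one (Theorem \ref{th3}(i)) and bounding its neighborhood through the matching of Theorem \ref{th3}(iii) yields $d(G) = \alpha(G) - \mu(G)$, and hence every maximum independent set is critical. Set $K := \ker(G)$; by Theorem \ref{th4}(ii), $K$ is itself critical with $d(K) = d(G)$, and Theorem \ref{th3}(iii) supplies a matching from $N(K)$ into $K$. Consider the induced bipartite subgraph $H := G - (K \cup N(K))$. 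Because $K \subseteq \mathrm{core}(G) \subseteq S$ for every $S \in \Omega(G)$, the independence of $S$ forces $S \cap N(K) = \emptyset$, so $S = K \cup (S \cap V(H))$ with $S \cap V(H)$ an independent set of $H$ of size $\alpha(G) - |K|$. Thus $\alpha(H) = \alpha(G) - |K|$, and combining this with $d(G) = \alpha(G) - \mu(G)$ yields $\alpha(H) = |V(H)|/2$, so $H$ admits a perfect matching. Hence $\Omega(G) = \{K \cup I : I \in \Omega(H)\}$, and because a bipartite graph with a perfect matching has its two color classes as distinct maximum independent sets, $\mathrm{core}(H) = \emptyset$; therefore $\mathrm{core}(G) = K = \ker(G)$.

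\textbf{Case 2 (almost bipartite non-K\"{o}nig-Egerv\'{a}ry $G$).} Let $C$ be the unique odd cycle. By Lemma \ref{lem84}, $\alpha(G) + \mu(G) = n(G) - 1$. For each $v \in V(C)$, the subgraph $G - v$ is bipartite, so K\"{o}nig gives $\alpha(G - v) + \mu(G - v) = n(G) - 1$; combined with $\mu(G - v) \leq \mu(G)$ and $\alpha(G - v) \leq \alpha(G)$, this forces $\alpha(G - v) = \alpha(G)$ and $\mu(G - v) = \mu(G)$, so $\Omega(G - v) \subseteq \Omega(G)$. Since every $S \in \Omega(G)$ satisfies $|S \cap V(C)| \leq \alpha(C) = (|V(C)| - 1)/2$, at least one vertex of $V(C)$ is missing from $S$, giving $\Omega(G) = \bigcup_{v \in V(C)} \Omega(G - v)$ and therefore $\mathrm{core}(G) = \bigcap_{v \in V(C)} \mathrm{core}(G - v)$. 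By Case 1 applied to each bipartite $G - v$, $\mathrm{core}(G - v) = \ker(G - v)$, so $\mathrm{core}(G) = \bigcap_{v \in V(C)} \ker(G - v)$. An analogous identity $\ker(G) = \bigcap_{v \in V(C)} \ker(G - v)$, established from $d(G - v) = d(G)$ together with the lattice structure of critical sets (Theorem \ref{th4}(ii)), closes the proof.

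\textbf{Main obstacle.} The technical heart is Case 1, where the equality $d(G) = \alpha(G) - \mu(G)$ and the matching from $N(K)$ into $K$ must be combined to force $H$ to have a perfect matching and $\Omega(G)$ to decompose as $\{K \cup I : I \in \Omega(H)\}$. In Case 2, the subtle point is the kernel identity $\ker(G) = \bigcap_{v \in V(C)} \ker(G - v)$, since a critical set of $G - v$ need not be critical in $G$; one must combine Theorem \ref{th4}(ii) with a delicate analysis of how $d$ behaves under vertex deletion.
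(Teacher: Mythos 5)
First, a point of reference: the paper does not prove this theorem at all --- it is imported from \cite{LevMan2013c,LevMan2022b} --- so there is no internal argument to compare yours against, and your proof has to stand on its own. Most of it does. Case 1 is correct: the identities $d(G)=\alpha(G)-\mu(G)$ and ``every $S\in\Omega(G)$ is critical'' are available directly as Theorem \ref{th2}\emph{(iii)} and Theorem \ref{th715}\emph{(iv)} (no need to rederive them), the count $|V(H)|=n(G)-|K|-|N(K)|=2\alpha(G)-2|K|=2\alpha(H)$ does force a perfect matching in the bipartite graph $H$, and the decomposition $\Omega(G)=\{K\cup I:I\in\Omega(H)\}$ together with $\mathrm{core}(H)=\emptyset$ gives $\mathrm{core}(G)=K$. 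The first half of Case 2 ($\Omega(G)=\bigcup_{v\in V(C)}\Omega(G-v)$, hence $\mathrm{core}(G)=\bigcap_{v\in V(C)}\ker(G-v)$) is also correct and is a nice reduction.

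The genuine gap is the closing sentence of Case 2. You assert $d(G-v)=d(G)$ and the identity $\ker(G)=\bigcap_{v\in V(C)}\ker(G-v)$, but Theorem \ref{th4}\emph{(ii)} does not deliver either: a priori $d(G-v)$ could exceed $d(G)$, and a critical set of $G-v$ need not be critical in $G$, as you yourself note. Here is how to close it. Since $G-v$ is bipartite, $d(G-v)=\alpha(G-v)-\mu(G-v)=\alpha(G)-\mu(G)\leq d(G)$ by Corollary \ref{cor1}; so by Proposition \ref{prop9}\emph{(i)}, for each $v\in V(C)$ either $d(G-v)=d(G)$ or $v\in\ker(G)$. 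Because $\ker(G)$ is independent it cannot contain the whole odd cycle, so $d(G)=\alpha(G)-\mu(G)$, whence $d(G-v)=d(G)$ and $v\notin\ker(G)$ for \emph{every} $v\in V(C)$. Moreover $v\notin N(\ker(G))$, since otherwise $d_{G-v}(\ker(G))=d(G)+1>d(G-v)$. Therefore $\ker(G)$ is a critical independent set of $G-v$, and since $\ker(G-v)$ lies in every critical independent set of $G-v$, we get $\ker(G-v)\subseteq\ker(G)$. This is all you need: fixing any one $v\in V(C)$, your identity plus Theorem \ref{th4}\emph{(i)} yields $\mathrm{core}(G)\subseteq\ker(G-v)\subseteq\ker(G)\subseteq\mathrm{core}(G)$. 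In particular the full intersection identity you proposed, while true a posteriori, is stronger than necessary and is the wrong thing to aim at directly.
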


If $S$ is an independent set of a graph $G$ and $A=V\left(  G\right)  -S$,
then we write $G=S\ast A$. For instance, if $E(G[A])=\emptyset$, then $G=S\ast
A$ is bipartite; if $G[A]$ is a complete graph, then $G=S\ast A$ is a split graph.

\begin{theorem}
\label{th715}For a graph $G$, the following properties are equivalent:

\emph{(i)} $G$ is a \textit{K\"{o}nig-Egerv\'{a}ry graph};

\emph{(ii) }\cite{LevMan2002a} $G=S\ast A$, where $S\in$ $\mathrm{Ind}(G)$,
$\left\vert S\right\vert \geq\left\vert A\right\vert $, and $\left(
S,A\right)  $ contains a matching $M$ with $\left\vert M\right\vert
=\left\vert A\right\vert $;

\emph{(iii) }\cite{LevMan2013b} each maximum matching of $G$ matches $V\left(
G\right)  -S$ into $S$, for every $S\in\Omega(G)$;

\emph{(iv)} \cite{LevMan2012a} every $S\in%
\Omega
(G)$ is critical;

\emph{(v)} \cite{Larson2011} there is some $S\in\Omega(G)$, such that $S$ is critical.
\end{theorem}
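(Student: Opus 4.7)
The plan is to prove the five equivalences in the order (i)$\Leftrightarrow$(ii), (i)$\Leftrightarrow$(iii), (iv)$\Rightarrow$(v)$\Rightarrow$(i), and finally the subtlest step (i)$\Rightarrow$(iv). The first two pairs will be pure double-counting; (v)$\Rightarrow$(i) is one application of Theorem~\ref{th3}(iii); and (i)$\Rightarrow$(iv) will reuse (iii) together with Theorem~\ref{th3}(i) to control how an arbitrary critical independent set sits inside a maximum independent set.

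For (i)$\Leftrightarrow$(ii), I would fix $S\in\Omega(G)$, set $A=V(G)\setminus S$, and note that since $S$ is maximum (hence maximal) independent, every vertex of $A$ has a neighbor in $S$, so $N(S)=A$. If $G$ is K\"onig--Egerv\'ary, then $|A|=n(G)-\alpha(G)=\mu(G)$, and since $S$ is independent each of the $\mu(G)$ edges of a maximum matching uses at least one vertex of $A$, so counting forces a perfect pairing of $A$ with distinct vertices of $S$. Conversely, the matching $M$ from (ii) gives $\alpha(G)+\mu(G)\geq|S|+|A|=n(G)$, and the standard upper bound yields equality. The same double count handles (i)$\Leftrightarrow$(iii): splitting any maximum matching as $M_{1}\cup M_{2}$ with $M_{1}\subseteq E(G[A])$ and $M_{2}\subseteq(S,A)$, the $\mu(G)$ edges saturate $2|M_{1}|+|M_{2}|\leq|A|=n(G)-\alpha(G)$ vertices of $A$; under the K\"onig--Egerv\'ary hypothesis $|M_{1}|+|M_{2}|=n(G)-\alpha(G)$, which forces $M_{1}=\emptyset$ and hence (iii). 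The converse direction is immediate from $\mu(G)\geq|V(G)\setminus S|=n(G)-\alpha(G)$.

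The implication (iv)$\Rightarrow$(v) is immediate since $\Omega(G)\neq\emptyset$. For (v)$\Rightarrow$(i), apply Theorem~\ref{th3}(iii) to a critical $S\in\Omega(G)$: it supplies a matching from $N(S)$ into $S$, and the maximality of $S$ again gives $N(S)=V(G)\setminus S$, so $\mu(G)\geq n(G)-\alpha(G)$.

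The main obstacle is (i)$\Rightarrow$(iv): I must show every $S\in\Omega(G)$ is critical. Using $N(S)=V(G)\setminus S$ one computes $d(S)=2\alpha(G)-n(G)$, so $d(G)\geq d(S)=2\alpha(G)-n(G)$. For the reverse inequality I would take a critical independent set $X$ with $d(X)=d(G)$ and, by Theorem~\ref{th3}(i), enlarge $X$ to some $S^{\prime}\in\Omega(G)$. Because $X\subseteq S^{\prime}$ and $N(X)\subseteq N(S^{\prime})$,
\[
d(S^{\prime})-d(X)=|S^{\prime}\setminus X|-|N(S^{\prime})\setminus N(X)|,
\]
so it suffices to inject $N(S^{\prime})\setminus N(X)$ into $S^{\prime}\setminus X$. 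This is where I invoke (iii), already equivalent to (i): the maximum matching matches $V(G)\setminus S^{\prime}$ into $S^{\prime}$, and its restriction to $N(S^{\prime})\setminus N(X)$ is an injection whose images must lie in $S^{\prime}\setminus X$, since a vertex of $N(S^{\prime})\setminus N(X)$ has no neighbor in $X$. Thus $d(S^{\prime})\geq d(X)=d(G)$, and combined with $d(S^{\prime})=d(S)=2\alpha(G)-n(G)$ this gives $d(S)=d(G)$, so $S$ is critical. The one delicate point to verify is precisely this adjacency argument that forces the matching partners to avoid $X$.
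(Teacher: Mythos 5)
Your proof is correct. Note that the paper itself gives no proof of Theorem~\ref{th715}: each equivalence is quoted from the literature (\cite{LevMan2002a}, \cite{LevMan2013b}, \cite{LevMan2012a}, \cite{Larson2011}), so there is no in-paper argument to compare against; what you have produced is a self-contained derivation relying only on the elementary bounds $\alpha(G)+\mu(G)\leq n(G)$ and Theorem~\ref{th3}. The counting steps for (i)$\Leftrightarrow$(ii) and (i)$\Leftrightarrow$(iii) are sound (each of the $\mu(G)=|A|$ disjoint matching edges meets $A$ in at least one vertex, forcing exactly one), and the logical ordering is coherent: you establish (i)$\Leftrightarrow$(iii) before invoking (iii) inside the proof of (i)$\Rightarrow$(iv), so the cycle (i)$\Rightarrow$(iv)$\Rightarrow$(v)$\Rightarrow$(i) closes without circularity. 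The ``delicate point'' you flag also holds: if $v\in N(S')\setminus N(X)$ is matched by the maximum matching to $u\in S'$, then $u\in X$ would give $v\in N(X)$, a contradiction, so the matching partners of $N(S')\setminus N(X)$ indeed lie in $S'\setminus X$, yielding $d(S')\geq d(X)=d(G)$ and hence $d(G)=2\alpha(G)-n(G)=d(S)$ for every $S\in\Omega(G)$. The only implicit fact worth stating explicitly is that $N(S)=V(G)\setminus S$ for every $S\in\Omega(G)$, which follows from maximality of $S$; you use it in (i)$\Leftrightarrow$(ii), (v)$\Rightarrow$(i), and the computation $d(S)=2\alpha(G)-n(G)$.
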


Some useful properties of K\"{o}nig-Egerv\'{a}ry graphs are recalled in the following.

\begin{theorem}
\label{th2} If $G$ is a \textit{K\"{o}nig-Egerv\'{a}ry graph, then}

\emph{(i)} \cite{LevMan2003} $N\left(  \mathrm{core}(G)\right)  $ is matched
into $\mathrm{core}(G)$ by every maximum matching of $G$;

\emph{(ii) }\cite{LevMan2006} $G-N\left[  \mathrm{core}(G)\right]  $ is a
\textit{K\"{o}nig-Egerv\'{a}ry graph with a perfect matching;}

\emph{(iii)} \cite{LevMan2012a}\textit{ }$\left\vert \mathrm{core}%
(G)\right\vert -\left\vert N\left(  \mathrm{core}(G)\right)  \right\vert
=\alpha\left(  G\right)  -\mu\left(  G\right)  =d\left(  G\right)  $.
\end{theorem}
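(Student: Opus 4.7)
The three items are intertwined, so I would establish them in the order (iii), (i), (ii).

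For (iii), Theorem \ref{th715}(iv) says every $S\in\Omega(G)$ is a critical independent set, and Theorem \ref{th4}(ii) says intersections of critical independent sets remain critical, so $\mathrm{core}(G)$ is itself critical and $d(G)=|\mathrm{core}(G)|-|N(\mathrm{core}(G))|$. To identify $d(G)$ with $\alpha(G)-\mu(G)$, pick any $S\in\Omega(G)$. By Theorem \ref{th715}(iii) a maximum matching of $G$ matches $V(G)-S$ into $S$, so $V(G)-S\subseteq N(S)$; the reverse inclusion holds because $S$ is independent. Hence $|N(S)|=n(G)-\alpha(G)=\mu(G)$, and since $S$ is critical we obtain $d(G)=|S|-|N(S)|=\alpha(G)-\mu(G)$.

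For (i), fix a maximum matching $M$ and a vertex $v\in N(\mathrm{core}(G))$. Every $S\in\Omega(G)$ contains $\mathrm{core}(G)$, so $v\in N(\mathrm{core}(G))\subseteq V(G)-S$, and Theorem \ref{th715}(iii) yields some $u_S\in S$ with $vu_S\in M$. Because $M$ is a matching, the partner of $v$ in $M$ is a single vertex independent of $S$; that vertex therefore lies in every $S\in\Omega(G)$, i.e., in $\mathrm{core}(G)$.

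For (ii), put $H=G-N[\mathrm{core}(G)]$ and let $M'$ be the set of edges of $M$ avoiding $N[\mathrm{core}(G)]$. Part (i) forces every edge of $M$ that meets $N[\mathrm{core}(G)]$ to join $\mathrm{core}(G)$ to $N(\mathrm{core}(G))$, so $M'\subseteq E(H)$ is a matching. The $M$-unsaturated vertices of $G$, numbering $n(G)-2\mu(G)=\alpha(G)-\mu(G)$, lie in every $S\in\Omega(G)$ because $V(G)-S$ is fully $M$-saturated into $S$, hence in $\mathrm{core}(G)$, so every vertex of $V(H)$ is $M$-saturated and $M'$ is a perfect matching of $H$. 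A short count using (iii) gives $n(H)=2(\alpha(G)-\xi(G))$, while $S-\mathrm{core}(G)$ is independent in $H$ and witnesses $\alpha(H)\geq n(H)/2$; combined with the perfect matching, this forces $\alpha(H)+\mu(H)=n(H)$, so $H$ is K\"{o}nig-Egerv\'{a}ry. The main delicacy is step (i): it is the uniqueness of the $M$-partner of $v$ across all choices of $S\in\Omega(G)$ that traps it inside $\mathrm{core}(G)$, and everything else then follows by combining this with Theorem \ref{th715}(iii) and the criticality of $\mathrm{core}(G)$.
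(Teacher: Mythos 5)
Your argument is correct, but note that the paper does not prove Theorem \ref{th2} at all: all three items are imported as known results with citations to earlier work, so there is no internal proof to compare against. What you have written is a legitimate self-contained derivation from the toolkit the paper does provide. Part (iii) correctly combines Theorem \ref{th715}(iv) with Theorem \ref{th4}(ii) to make $\mathrm{core}(G)$ critical, and the identification $N(S)=V(G)-S$ for $S\in\Omega(G)$ (matching gives one inclusion, independence the other) cleanly yields $d(G)=\alpha(G)-\mu(G)$. The key observation in part (i) --- that the single $M$-partner of $v\in N(\mathrm{core}(G))$ must lie in every $S\in\Omega(G)$ simultaneously, hence in their intersection --- is exactly the right mechanism. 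In part (ii) your bookkeeping also checks out: every $M$-edge touching $N[\mathrm{core}(G)]$ lies in $\left(\mathrm{core}(G),N(\mathrm{core}(G))\right)$, the $M$-unsaturated vertices are confined to $\mathrm{core}(G)$, so the restriction $M'$ perfectly matches $H$, and the count $n(H)=2\left(\alpha(G)-\xi(G)\right)$ together with the independent set $S-\mathrm{core}(G)$ forces $\alpha(H)+\mu(H)=n(H)$. One small point worth making explicit if you write this up: $S-\mathrm{core}(G)\subseteq V(H)$ because $N(\mathrm{core}(G))$ is disjoint from $S$ (a vertex of $S$ adjacent to $\mathrm{core}(G)\subseteq S$ would violate independence); you use this silently when you call $S-\mathrm{core}(G)$ an independent set of $H$.
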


\begin{corollary}
\label{lem2}If $G$ a \textit{K\"{o}nig-Egerv\'{a}ry graph} without isolated
vertices an\textit{d }$\mathrm{core}(G)=\{v\}$, then $G$ has a perfect
matching and $v$ is a leaf.
\end{corollary}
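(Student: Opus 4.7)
The plan is to squeeze the two conclusions out of the single identity in Theorem \ref{th2}(iii) combined with the basic inequality $\alpha(G)\geq\mu(G)$ valid for K\"{o}nig-Egerv\'{a}ry graphs.

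First I would apply Theorem \ref{th2}(iii) directly with $\mathrm{core}(G)=\{v\}$ to obtain
\[
1-\left\vert N(v)\right\vert =\left\vert \mathrm{core}(G)\right\vert -\left\vert N(\mathrm{core}(G))\right\vert =\alpha(G)-\mu(G).
\]
Next, I would note that in any K\"{o}nig-Egerv\'{a}ry graph $G$ one has $\alpha(G)\geq\mu(G)$: indeed, by Theorem \ref{th715}(ii), $G=S\ast A$ with $|S|\geq|A|$, so $\alpha(G)=|S|\geq|A|=n(G)-\alpha(G)$, whence $\alpha(G)\geq n(G)/2\geq\mu(G)$. Hence the right-hand side of the displayed identity is nonnegative, giving $|N(v)|\leq 1$.

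Since $G$ has no isolated vertices, $|N(v)|\geq 1$, and therefore $|N(v)|=1$; this is exactly the statement that $v$ is a leaf. Plugging back, $\alpha(G)-\mu(G)=0$, and combining with $\alpha(G)+\mu(G)=n(G)$ yields $\mu(G)=n(G)/2$, which means that $G$ admits a perfect matching.

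There is no real obstacle here; the only subtlety worth mentioning explicitly is deriving $\alpha(G)\geq\mu(G)$ from the K\"{o}nig-Egerv\'{a}ry hypothesis (via Theorem \ref{th715}(ii)), so that the sign of $\alpha(G)-\mu(G)$ can be used to pinch $|N(v)|$ between $1$ and $1$.
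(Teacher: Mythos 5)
Your argument is correct, but it routes both conclusions through the single numerical identity of Theorem \ref{th2}\emph{(iii)}, whereas the paper splits the work between the two structural parts of that theorem: it gets $\left\vert N(\mathrm{core}(G))\right\vert=1$ (hence the leaf) from Theorem \ref{th2}\emph{(i)} --- a matching of $N(\mathrm{core}(G))$ into the single vertex $v$ forces $\left\vert N(\mathrm{core}(G))\right\vert\leq1$ --- and the perfect matching from Theorem \ref{th2}\emph{(ii)}. Your version instead pinches $\left\vert N(v)\right\vert$ between $1$ and $1$ using $1-\left\vert N(v)\right\vert=\alpha(G)-\mu(G)\geq0$, then reads off $\alpha(G)=\mu(G)$ and combines it with $\alpha(G)+\mu(G)=n(G)$. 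Both are sound two-line proofs; yours is purely arithmetic and needs only part \emph{(iii)}, while the paper's is more structural and makes the leaf edge and the matching visible. Two small remarks: the inequality $\alpha(G)\geq\mu(G)$ follows even more directly from $\alpha(G)+\mu(G)=n(G)\geq2\mu(G)$, with no appeal to Theorem \ref{th715}\emph{(ii)}; and in your derivation via Theorem \ref{th715}\emph{(ii)} you should write $\alpha(G)\geq\left\vert S\right\vert$ rather than $\alpha(G)=\left\vert S\right\vert$, since that statement only asserts $S\in\mathrm{Ind}(G)$ --- the chain $\alpha(G)\geq\left\vert S\right\vert\geq n(G)/2\geq\mu(G)$ still closes.
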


\begin{proof}
Since $G$ has no isolated vertices, by Theorem \ref{th2}\emph{(i)}, it follows
that $\left\vert N\left(  \mathrm{core}(G)\right)  \right\vert =1$, and hence,
$v$ is a leaf. Further, Theorem \ref{th2}\emph{(ii)} implies $\alpha\left(
G\right)  =\mu\left(  G\right)  $, which ensures that $G$ has a perfect matching.
\end{proof}

\begin{proposition}
\label{prop6}If $G$ is a non-bipartite K\"{o}nig-Egerv\'{a}ry graph, then
there exists an induced subgraph $H$ of $G$, such that $H$ is not a
K\"{o}nig-Egerv\'{a}ry graph.
\end{proposition}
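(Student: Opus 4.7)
The plan is to exhibit an induced odd cycle of $G$ as the required non-K\"{o}nig-Egerv\'{a}ry subgraph. It is worth noting up front that the K\"{o}nig-Egerv\'{a}ry hypothesis is never actually used; the argument works for any non-bipartite graph. The statement remains meaningful in context, however, because it contrasts the K\"{o}nig-Egerv\'{a}ry property with bipartiteness and shows that, unlike the latter, it is not hereditary.

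First I would pick a shortest odd cycle in $G$, say $C=v_{0}v_{1}\cdots v_{2k}v_{0}$ of length $2k+1\geq 3$, which exists because $G$ is non-bipartite. The key point is then to verify that $C$ is induced in $G$: if a chord $v_{i}v_{j}$ existed, it would partition $C$ into two smaller cycles whose lengths sum to $(2k+1)+2=2k+3$; since this sum is odd, one of those cycles is odd and strictly shorter than $2k+1$, contradicting the minimality of $C$. Hence $H:=G[V(C)]$ is isomorphic to $C_{2k+1}$.

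Finally, the routine equalities $\alpha(C_{2k+1})=k=\mu(C_{2k+1})$ yield
\[
\alpha(H)+\mu(H)=2k<2k+1=n(H),
\]
so $H$ is not a K\"{o}nig-Egerv\'{a}ry graph, as required. The only nontrivial ingredient is the shortest-odd-cycle-is-induced observation, a standard graph-theoretic trick, so I anticipate no real obstacle in executing this plan.
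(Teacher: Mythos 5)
Your proof is correct and follows essentially the same route as the paper: exhibit an induced odd cycle and note that $\alpha(C_{2k+1})+\mu(C_{2k+1})=2k<2k+1$. The only difference is that you justify the existence of an induced odd cycle via the shortest-odd-cycle argument, a step the paper simply asserts.
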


\begin{proof}
Since $G$ is non-bipartite, there exists an odd cycle $C_{2k+1}$ as an induced
subgraph of $G$. Since $\alpha(G[V\left(  C_{2k+1}]\right)  )+\mu(G[V\left(
C_{2k+1}]\right)  )<\left\vert V\left(  C_{2k+1}\right)  \right\vert $, we
infer that $H=C_{2k+1}$ is not a K\"{o}nig-Egerv\'{a}ry graph.
\end{proof}

In other words, Proposition \ref{prop6} says that, unlike bipartite graphs or
forests, being a K\"{o}nig-Egerv\'{a}ry graph is not a hereditary property.

\begin{corollary}
A graph is hereditary K\"{o}nig-Egerv\'{a}ry if and only if it is bipartite.
\end{corollary}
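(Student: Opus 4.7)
The plan is to handle the two implications separately, with the reverse direction being essentially a restatement of Proposition \ref{prop6}.

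For the forward direction, suppose $G$ is bipartite. Then every induced subgraph $H$ of $G$ is also bipartite (bipartiteness is hereditary, since removing vertices cannot create an odd cycle). By K\"{o}nig's classical theorem, every bipartite graph satisfies $\alpha(H)+\mu(H)=n(H)$, hence every such $H$ is K\"{o}nig-Egerv\'{a}ry. Therefore $G$ is hereditary K\"{o}nig-Egerv\'{a}ry.

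For the reverse direction, I would argue by contrapositive: assume $G$ is non-bipartite and show some induced subgraph fails to be K\"{o}nig-Egerv\'{a}ry. Since $G$ is non-bipartite, $G$ contains an odd cycle, and a shortest odd cycle $C_{2k+1}$ in $G$ is necessarily chordless, hence induced. For such a cycle one computes directly $\alpha(C_{2k+1})+\mu(C_{2k+1})=k+k=2k<2k+1=n(C_{2k+1})$, so $C_{2k+1}$ is an induced subgraph of $G$ that is not K\"{o}nig-Egerv\'{a}ry. (Equivalently, if $G$ happens to be K\"{o}nig-Egerv\'{a}ry itself, this is exactly the content of Proposition \ref{prop6}; if $G$ is not K\"{o}nig-Egerv\'{a}ry, then $G$ itself is the desired induced subgraph.) In either case $G$ is not hereditary K\"{o}nig-Egerv\'{a}ry, completing the contrapositive.

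There is no genuine obstacle here; the only small point to be careful about is to notice that one must use a \emph{shortest} odd cycle (or equivalently an induced odd cycle) rather than an arbitrary odd cycle, so that the witness really is an induced subgraph. Everything else reduces to the arithmetic $\alpha(C_{2k+1})+\mu(C_{2k+1})=2k$ already used in Proposition \ref{prop6}.
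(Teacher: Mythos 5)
Your proof is correct and follows essentially the same route as the paper: the forward direction via heredity of bipartiteness plus K\H{o}nig's theorem, and the reverse direction via an induced odd cycle $C_{2k+1}$ with $\alpha+\mu=2k<2k+1$, which is exactly the content of Proposition \ref{prop6} invoked by the paper. Your extra care about taking a \emph{shortest} (hence chordless) odd cycle, and about the trivial case where $G$ itself is not K\"{o}nig-Egerv\'{a}ry, only makes explicit what the paper leaves implicit.
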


\begin{proof}
Assume, to the contrary, that $G$ is hereditary K\"{o}nig-Egerv\'{a}ry, but is
not a bipartite graph. According to Proposition \ref{prop6}, there is some
subgraph $H$ of $G$, such that $H$ is not a K\"{o}nig-Egerv\'{a}ry graph,
which contradicts the hypothesis on $G$.

The converse is clear, since every subgraph of a bipartite graph is bipartite
as well, and hence it is a K\"{o}nig-Egerv\'{a}ry graph.
\end{proof}

Let $\varrho_{v}\left(  G\right)  $ denote the number of vertices $v\in
V\left(  G\right)  $, such that $G-v$ is a K\"{o}nig-Egerv\'{a}ry graph. For
example, $\varrho_{v}\left(  C_{2k+1}\right)  =2k+1$ for each $k\geq1$, while
$\varrho_{v}\left(  K_{n}\right)  =n-2$, for $n\geq3$.

Let $\varrho_{e}\left(  G\right)  $ equal the number of edges $e\in E\left(
G\right)  $ satisfying $G-e$ is a K\"{o}nig-Egerv\'{a}ry graph. For instance,
$\varrho_{e}\left(  G\right)  =m\left(  G\right)  $, where is $G$ a bipartite
graph or $G=C_{2k+1}$ for each $k\geq1$. Notice that, unlike the bipartite
graphs, deleting an edge of a K\"{o}nig-Egerv\'{a}ry graph may result in a
non-K\"{o}nig-Egerv\'{a}ry graph; for instance, the graph $G_{2}$ from Figure
\ref{fig333} is K\"{o}nig-Egerv\'{a}ry graph, while $G_{2}-ab$ is not
K\"{o}nig-Egerv\'{a}ry graph.

In this paper, we substantially update the current knowledge on critical
independent sets in order to show that for a K\"{o}nig-Egerv\'{a}ry graph $G$:

\begin{itemize}
\item the level of vertex heredity\textbf{ }$\varrho_{v}\left(  G\right)  $ is
equal to\textbf{ }$n\left(  G\right)  -\xi\left(  G\right)  +\varepsilon
\left(  G\right)  $;

\item the level of edge heredity\textbf{ }$\varrho_{e}\left(  G\right)  $ is
greater or equal to $m\left(  G\right)  -\xi\left(  G\right)  +\varepsilon
\left(  G\right)  $ and this bound is tight.
\end{itemize}

\section{Critical independent sets}

\begin{theorem}
\label{Theorem1}Let $A$ be a critical independent set of the graph $G$ and
$X=A\cup N\left(  A\right)  $. Then the following assertions are true:

\emph{(i)} $G\left[  X\right]  $ is a K\"{o}nig-Egerv\'{a}ry graph;

\emph{(ii)} $\alpha\left(  G-X\right)  \leq\mu\left(  G-X\right)  $;

\emph{(iii)} $\alpha(G)=\alpha(G[X])+\alpha(G-X)$;

\emph{(iv)} $\mu\left(  G\left[  X\right]  \right)  +\mu\left(  G-X\right)
=\mu\left(  G\right)  $ and, in particular, each maximum matching of $G\left[
X\right]  $ can be enlarged to a maximum matching of $G$.
\end{theorem}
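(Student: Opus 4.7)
My plan is to establish the four parts in the order (i), (iii), (ii), (iv), since (iii) only needs (i), while (ii) and (iv) rely on criticality of $A$ together with the structure given by (i).

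For (i), I would combine Theorem \ref{th3}(iii), which supplies a matching in $G[X]$ from $N(A)$ into $A$ and hence gives $\mu(G[X]) \geq |N(A)|$, with the fact that every edge of $G[X]$ meets $N(A)$ (since $A$ is independent), which gives $\mu(G[X]) \leq |N(A)|$. So $\mu(G[X]) = |N(A)|$, and together with $\alpha(G[X]) \geq |A|$ and the general inequality $\alpha(H) + \mu(H) \leq n(H)$, one obtains $\alpha(G[X]) = |A|$ and $\alpha(G[X]) + \mu(G[X]) = |X|$, so $G[X]$ is K\"{o}nig-Egerv\'{a}ry. Part (iii) is then essentially bookkeeping: any independent set $I$ of $G$ splits as $(I \cap X) \cup (I \cap (V - X))$ with independent restrictions, yielding $\alpha(G) \leq \alpha(G[X]) + \alpha(G - X)$; for the opposite direction, no vertex of $V - X$ has a neighbor in $A$ (otherwise it would lie in $N(A) \subseteq X$), so the union of $A$ with a maximum independent set of $G - X$ is independent in $G$.

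The heart of the argument is (ii). For any independent set $T$ of $G - X$, the observation just made shows $A \cup T$ is independent in $G$, and criticality of $A$ yields $d(A \cup T) \leq d(G) = |A| - |N(A)|$. Because $N(T) \cap A = \emptyset$ and $N(T) \subseteq N(A) \cup (V - X)$, the left side expands to $|A| + |T| - |N(A)| - |N_{G - X}(T)|$, and comparison gives $|T| \leq |N_{G - X}(T)|$. This bound holds for every independent subset of $V - X$; in particular, for every subset of a maximum independent set $T^*$ of $G - X$ (each such subset being independent), so it is exactly Hall's condition for the bipartite subgraph of $G - X$ between $T^*$ and $N_{G - X}(T^*)$. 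Hall's theorem then produces a matching of $G - X$ saturating $T^*$, whence $\mu(G - X) \geq \alpha(G - X)$.

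For (iv), the inequality $\mu(G) \geq \mu(G[X]) + \mu(G - X)$ and the enlargement claim are immediate: any maximum matchings of $G[X]$ and $G - X$ have disjoint vertex sets, so their union is a matching of $G$ of that size. For the reverse, I would decompose an arbitrary matching $M$ of $G$ as $M_X$ (both endpoints in $X$), $M_Y$ (both in $V - X$), and $M_{XY}$ (one in each). Every edge of $M_X$ meets $N(A)$ because $A$ is independent, and every edge of $M_{XY}$ has its $X$-endpoint in $N(A)$, since every neighbor of an $A$-vertex lies in $N(A) \subseteq X$. Counting distinct $N(A)$-vertices used, the matching property gives $|M_X| + |M_{XY}| \leq |N(A)| = \mu(G[X])$, together with $|M_Y| \leq \mu(G - X)$, so $|M| \leq \mu(G[X]) + \mu(G - X)$. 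The subtlest point I anticipate is the Hall step in (ii): the bound $|T| \leq |N_{G - X}(T)|$ must be applied not only to a maximum independent set but to each of its subsets, which is legitimate precisely because every subset of an independent set is itself independent.
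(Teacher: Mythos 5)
Your proof is correct, and while parts (i)--(iii) travel close to the paper's route, the overall packaging differs in two worthwhile ways. For (ii) the paper first invokes Theorem \ref{th3}\emph{(i)} to embed $A$ in some $S\in\Omega(G)$, verifies Hall's condition for subsets of $S-A$, and then separately proves $\alpha(G-X)=|S-A|$; you instead apply the criticality inequality $d(A\cup T)\le d(G)$ directly to subsets $T$ of a maximum independent set of $G-X$, which makes the argument intrinsic to $G-X$, removes the dependence on Theorem \ref{th3}\emph{(i)}, and likewise lets you prove (iii) by a clean two-sided inequality ($I\mapsto (I\cap X)\cup(I\cap(V-X))$ for the upper bound, $A\cup T^{*}$ for the lower) rather than through the set $S-A$. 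The more substantial divergence is in (iv): the paper fixes maximum matchings $M_{1}$ of $G[X]$ and $M_{2}$ of $G-X$ and argues, somewhat informally, that no edge of $\left(N(A),V(G)-S-N(A)\right)$ can augment $M_{1}\cup M_{2}$ because $M_{1}$ already saturates $N(A)$; you instead bound an \emph{arbitrary} matching $M$ of $G$ by splitting it into $M_{X}$, $M_{Y}$, $M_{XY}$ and charging each edge of $M_{X}\cup M_{XY}$ to a distinct vertex of $N(A)$, giving $|M|\le |N(A)|+\mu(G-X)=\mu(G[X])+\mu(G-X)$. Your version is more rigorous and self-contained at the cost of not exhibiting the explicit maximum matching structure the paper later reuses; both establish the "each maximum matching of $G[X]$ extends" clause, since the union $M_{1}\cup M_{2}$ attains your upper bound.
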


\begin{proof}
\emph{(i) }By Theorem \ref{th3}(\emph{iii}) and Theorem \ref{th715}%
\textit{(\emph{ii})}, we get that $G\left[  X\right]  $ is a
K\"{o}nig-Egerv\'{a}ry graph having $\alpha(G\left[  X\right]  )=\left\vert
A\right\vert $ and $\mu(G\left[  X\right]  )=\left\vert N(A)\right\vert $.

\emph{(ii) }According to Theorem \ref{th3}\emph{(i)}, there exists a maximum
independent set $S$ such that $A\subseteq S$. Suppose that $\left\vert
B\right\vert >\left\vert N\left(  B\right)  \right\vert $ holds for some
$B\subseteq S-A$. Then, it follows that
\[
\left\vert A\right\vert -\left\vert N\left(  A\right)  \right\vert <\left(
\left\vert A\right\vert -\left\vert N\left(  A\right)  \right\vert \right)
+\left(  \left\vert B\right\vert -\left\vert N\left(  B\right)  \right\vert
\right)  \leq\left\vert A\cup B\right\vert -\left\vert N\left(  A\cup
B\right)  \right\vert ,
\]
which contradicts the hypothesis on $A$, namely, the fact that $\left\vert
A\right\vert -\left\vert N\left(  A\right)  \right\vert =d(G)$. Hence
$\left\vert B\right\vert \leq\left\vert N\left(  B\right)  \right\vert $ is
true for every $B\subseteq S-A$. Consequently, by Hall's Theorem there exists
a matching from $S-A$ into $V\left(  G\right)  -S-N\left(  A\right)  $ that
implies $\left\vert S-A\right\vert \leq\mu\left(  G-X\right)  $.

It remains to show that $\alpha\left(  G-X\right)  =\left\vert S-A\right\vert
$. By way of contradiction, assume that
\[
\alpha\left(  G-X\right)  =\left\vert D\right\vert >\left\vert S-A\right\vert
\]
for some independent set $D\subseteq V-X$. Since $D\cap N\left[  A\right]
=\emptyset$, the set $A\cup D$ is independent, and
\[
\left\vert A\cup D\right\vert =\left\vert A\right\vert +\left\vert
D\right\vert >\left\vert A\right\vert +\left\vert S-A\right\vert
=\alpha\left(  G\right)  ,
\]
which is impossible.

\emph{(iii) }By Part\emph{(i)}, we know\ that $\alpha(G\left[  X\right]
)=\left\vert A\right\vert $. By Theorem \ref{th3}\emph{(i)}, there exists
$S\in\Omega\left(  G\right)  $, such that $A\subseteq S$. Clearly, $S-A$ is an
independent set in $G-X$. Hence, $\alpha(G-X])\geq\left\vert S-A\right\vert $.
Assume that there is an independent set $B$ in $G-X]$, such that $\left\vert
B\right\vert >\left\vert S-A\right\vert $. First, $A\cup B$ is independent in
$G$ and $A\cap B=\emptyset$, since $B\cap X=\emptyset$. Second,
\[
\left\vert A\cup B\right\vert =\left\vert A\right\vert +\left\vert
B\right\vert >\left\vert A\right\vert +\left\vert S-A\right\vert =\left\vert
S\right\vert =\alpha(G),
\]
which is a contradiction. Thus,
\[
\alpha(G\left[  X\right]  )+\alpha(G-X])=\left\vert A\right\vert +\left\vert
S-A\right\vert =\left\vert S\right\vert =\alpha(G).
\]

\emph{(iv) }Let $M_{1}$ be a maximum matching of $H$ and $M_{2}$ be a maximum
matching of $G-X$. We claim that $M_{1}\cup M_{2}$ is a maximum matching of
$G$. \begin{figure}[h]
\setlength{\unitlength}{1.0cm} \begin{picture}(5,4)\thicklines
\put(6.7,1){\oval(11.7,1.5)}
\put(5,1){\oval(4.5,1)}
\put(5,3){\oval(4.5,1)}
\multiput(3.5,1)(1,0){2}{\circle*{0.29}}
\multiput(3.5,3)(1,0){2}{\circle*{0.29}}
\multiput(6.5,1)(0,2){2}{\circle*{0.29}}
\multiput(4.6,1)(0.35,0){5}{\circle*{0.1}}
\multiput(4.6,3)(0.35,0){5}{\circle*{0.1}}
\put(6.5,1){\line(0,1){2}}
\multiput(3.5,1)(1,0){2}{\line(0,1){2}}
\put(0.5,1){\makebox(0,0){$S$}}
\put(0.2,2.2){\makebox(0,0){$G$}}
\put(1.75,1){\makebox(0,0){$S-A$}}
\put(1.5,3){\makebox(0,0){$V-S-N(A)$}}
\put(10,1){\oval(3.3,1)}
\put(9.5,3){\oval(2.3,1)}
\multiput(9,1)(1,0){3}{\circle*{0.29}}
\multiput(9,3)(1,0){2}{\circle*{0.29}}
\put(9,1){\line(0,1){2}}
\put(10,1){\line(0,1){2}}
\put(12,1){\makebox(0,0){$A$}}
\put(11.2,3){\makebox(0,0){$N(A)$}}
\put(8.2,2.2){\makebox(0,0){$M_{1}$}}
\put(2.5,2.2){\makebox(0,0){$M_{2}$}}
\end{picture}\caption{$S\in\Omega(G)$ and $A$ is a critical independent set of
$G${.}}%
\label{fig44}%
\end{figure}
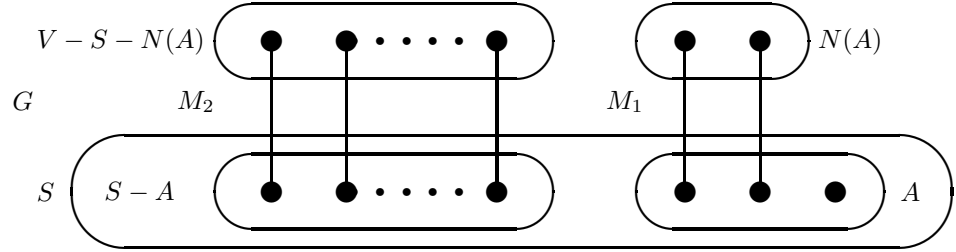

The only edges that may enlarge $M_{1}\cup M_{2}$ belong to the set $\left(
N\left(  A\right)  ,V\left(  G\right)  -S-N\left(  A\right)  \right)  $. The
matching $M_{1}$ covers all the vertices of $N\left(  A\right)  $ in
accordance with Theorem \ref{th715}\emph{(iii)} and Part \emph{(i)}.
Therefore, to choose an edge from the set $\left(  N\left(  A\right)
,V\left(  G\right)  -S-N\left(  A\right)  \right)  $ means to loose an edge
from $M_{1}$. In other words, no matching different from $M_{1}\cup M_{2}$ may
overstep $\left\vert M_{1}\cup M_{2}\right\vert $.

Consequently, each maximum matching of $G\left[  X\right]  $ can find its
counterpart in $G-X$ in order to build a maximum matching of $G$.
\end{proof}

Let us mention that Part \emph{(iii) }and Part \emph{(i) }of Theorem
\ref{Theorem1} generalize Part \emph{(i) }and Part \emph{(iii) }of Theorem
\ref{th10} for an arbitrary critical independent set.

\begin{theorem}
\label{th10}\cite{Larson2011} For any graph $G$, there is a unique set
$X\subseteq V(G)$ such that

\emph{(i)} $\alpha(G)=\alpha\left(  G\left[  X\right]  \right)  +\alpha\left(
G-X\right)  $;

\emph{(ii)} $X=N\left[  A\right]  $\ for every $A\in\mathrm{MaxCritIndep}(G)$; \ 

\emph{(iii)} $G\left[  X\right]  $ is a \textit{K\"{o}nig-Egerv\'{a}ry} graph.

\emph{(iv) }$G-X$ has only $\emptyset$ as a critical independent set.
\end{theorem}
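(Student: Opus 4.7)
The plan is to build the set $X$ explicitly from a maximum critical independent set and then invoke Theorem \ref{Theorem1}, which has already done most of the structural work. Take any $A\in\mathrm{MaxCritIndep}(G)$ and set $X=N[A]=A\cup N(A)$. Then part (iii) is immediate from Theorem \ref{Theorem1}(i), and part (i) is immediate from Theorem \ref{Theorem1}(iii). This also ensures existence.

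For part (ii) and the uniqueness of $X$, I would first observe that $\mathrm{MaxCritIndep}(G)$ is in fact a singleton. Indeed, if $A_{1},A_{2}\in\mathrm{MaxCritIndep}(G)$, then Theorem \ref{th4}(ii) says $A_{1}\cup A_{2}$ is critical, so $|A_{1}\cup A_{2}|$ does not exceed the maximum critical size $|A_{1}|$; combined with $|A_{1}\cup A_{2}|\geq|A_{1}|$ this forces $A_{2}\subseteq A_{1}$, and symmetrically $A_{1}=A_{2}$. Hence $X=N[A]$ is unambiguously defined, proving (ii) and the overall uniqueness claim.

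The only real obstacle is part (iv). Suppose, for contradiction, that $D\subseteq V-X$ is a nonempty independent set of $G-X$ with $|D|-|N_{G-X}(D)|=d(G-X)\geq 0$. I would compute $|A\cup D|-|N_{G}(A\cup D)|$. The key observation is that since $D\cap N[A]=\emptyset$, no vertex of $D$ is adjacent to $A$, so $N_{G}(D)\cap A=\emptyset$; consequently $N_{G}(D)\cap X\subseteq N(A)$, and since $N(A)\subseteq X$ while $N_{G-X}(D)\subseteq V-X$ are disjoint, one gets
\[
|N_{G}(A\cup D)|=|N(A)|+|N_{G-X}(D)|,
\]
and therefore $|A\cup D|-|N_{G}(A\cup D)|=d(G)+d(G-X)$. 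If $d(G-X)>0$ this exceeds $d(G)$, contradicting the definition of the critical difference. If $d(G-X)=0$ and $D\neq\emptyset$, then $A\cup D$ is a critical independent set strictly larger than $A$, contradicting the maximality of $A$. Either way we reach a contradiction, so $\emptyset$ is the only critical independent set of $G-X$.

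The delicate point in this plan is isolating the inclusion $N_{G}(D)\cap X\subseteq N(A)$; everything else reduces to a short counting argument together with Theorem \ref{Theorem1} and Theorem \ref{th4}(ii). I expect no further obstacles, since the uniqueness of $A$ makes (ii) automatic and the two alternatives $d(G-X)>0$ versus $d(G-X)=0$ in the argument for (iv) are handled by the same computation.
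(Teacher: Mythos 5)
The paper does not prove Theorem \ref{th10} itself (it is quoted from Larson), so there is no internal proof to compare with; judged on its own, your proposal is sound for parts (i), (iii) and (iv) but has a genuine gap at part (ii). Parts (i) and (iii) do follow from Theorem \ref{Theorem1}, and your counting argument for (iv) is correct: since $D\subseteq V-N[A]$, one has $N_G(D)\cap A=\emptyset$ and $N_G(D)\cap X\subseteq N(A)$, the sets $N(A)$ and $N_{G-X}(D)$ are disjoint, so $|A\cup D|-|N_G(A\cup D)|=d(G)+d_{G-X}(D)$ with $A\cup D$ independent; the case $d_{G-X}(D)>0$ contradicts the definition of $d(G)$ and the case $d_{G-X}(D)=0$, $D\neq\emptyset$ contradicts the maximality of $A$ among critical independent sets.

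The gap: your claim that $\mathrm{MaxCritIndep}(G)$ is a singleton is false. Already for $G=K_2$ with $V=\{u,w\}$ we have $d(K_2)=0$, and both $\{u\}$ and $\{w\}$ are maximum critical independent sets; similarly $P_4$ has several. The flaw in your argument is that Theorem \ref{th4}\emph{(ii)} makes $A_1\cup A_2$ a critical \emph{set}, not a critical \emph{independent} set, and $A_1$ is maximum only within the family of critical independent sets; hence $|A_1\cup A_2|\leq |A_1|$ does not follow. (In $K_2$, $\{u\}\cup\{w\}=V$ is indeed critical, of size $2>1$.) What part (ii) actually asserts -- and what carries the uniqueness of $X$ -- is the weaker but still non-trivial statement that $N[A_1]=N[A_2]$ for any two maximum critical independent sets $A_1,A_2$. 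This is the real content of Larson's theorem and needs its own argument (for instance via the lattice structure of critical sets from Theorem \ref{th4}\emph{(ii)} together with the matching of Theorem \ref{th3}\emph{(iii)}); your proposal supplies none, so both (ii) and the uniqueness claim remain unproved.
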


\begin{remark}
The graph $H$ has the only $\emptyset$ as a critical independent set if and
only if $\left\vert N(A)\right\vert >\left\vert A\right\vert $ for every
independent set $A$, i.e., $H$ is regularizable \cite{Berge1982}.
\end{remark}

For instance, $K_{2p}$ has $\emptyset$ as a critical independent set, while
its every vertex is $\mu$-critical.

Theorem \ref{Theorem1} allows us to give an alternative proof of the following
inequality due to Lorentzen.

\begin{corollary}
\label{cor1}\cite{Lorentzen1966,Schrijver2003,LevMan2012c}, The inequality
$d\left(  G\right)  \geq\alpha\left(  G\right)  -\mu\left(  G\right)  $ holds
for every graph $G$.
\end{corollary}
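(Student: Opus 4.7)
The plan is to apply Theorem \ref{Theorem1} directly to any critical independent set $A$ of $G$ (such a set always exists, since $\emptyset$ is critical whenever $d(G)=0$, and otherwise any $A$ achieving $d(G)$ serves). Setting $X = A \cup N(A)$, part (i) tells us that $G[X]$ is König-Egerváry with $\alpha(G[X]) = |A|$ and $\mu(G[X]) = |N(A)|$, so that the "$X$-contribution" to $\alpha - \mu$ is exactly $|A| - |N(A)|$, which equals $d(G)$ since $A$ is critical.

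Next, I would combine part (iii), which says $\alpha(G) = \alpha(G[X]) + \alpha(G-X)$, with part (iv), which says $\mu(G) = \mu(G[X]) + \mu(G-X)$. Subtracting yields
\[
\alpha(G) - \mu(G) = \bigl(|A| - |N(A)|\bigr) + \bigl(\alpha(G-X) - \mu(G-X)\bigr) = d(G) + \bigl(\alpha(G-X) - \mu(G-X)\bigr).
\]
Finally, part (ii) gives $\alpha(G-X) \leq \mu(G-X)$, so the second bracket is non-positive and the inequality $\alpha(G) - \mu(G) \leq d(G)$ follows.

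There is essentially no obstacle here, since Theorem \ref{Theorem1} has already done all the structural work: it provides both the clean decomposition of $\alpha$ and $\mu$ across $X$ and $G-X$ and the sign of the residual term on $G-X$. The only minor check is the degenerate case $A = \emptyset$ (which occurs precisely for regularizable $G$, by the remark following Theorem \ref{th10}); here $X = \emptyset$, $d(G) = 0$, and the desired inequality reduces to $\alpha(G) \leq \mu(G)$, which is exactly part (ii) applied to $A = \emptyset$. So the argument handles all graphs uniformly and recovers Lorentzen's inequality as an immediate corollary of the decomposition theorem.
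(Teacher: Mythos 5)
Your argument is correct and is essentially the paper's own proof: both take a critical independent set $A$, set $X=A\cup N(A)$, and combine the decomposition $\alpha(G)=\alpha(G[X])+\alpha(G-X)$, $\mu(G)=\mu(G[X])+\mu(G-X)$ with $\alpha(G[X])-\mu(G[X])=|A|-|N(A)|=d(G)$ and $\alpha(G-X)\leq\mu(G-X)$ from Theorem \ref{Theorem1}. Your subtract-and-bound presentation is, if anything, slightly cleaner than the paper's chain of inequalities, and your remark on the $A=\emptyset$ case is a harmless extra check.
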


\begin{proof}
Let $A$ be a critical independent set of $G$, and $X=A\cup N\left(  A\right)
$.

By Theorem \ref{Theorem1}\emph{(ii)}, we get $\alpha\left(  G-X\right)
-\mu\left(  G-X\right)  \leq0$. Hence it follows that%
\[
\alpha\left(  G\left[  X\right]  \right)  -\mu\left(  G\left[  X\right]
\right)  \geq\left(  \alpha\left(  G\left[  X\right]  \right)  +\alpha\left(
G-X\right)  \right)  -\left(  \mu\left(  G\left[  X\right]  \right)
+\mu\left(  G-X\right)  \right)  .
\]

Theorem \ref{Theorem1}\emph{(iii) }claims that $\mu\left(  G\left[  X\right]
\right)  +\mu\left(  G-X\right)  =\mu\left(  G\right)  $.

Since $A$ is a critical independent set, there exists some $S\in\Omega\left(
G\right)  $ such that $A\subseteq S$, and $\alpha\left(  G\left[  X\right]
\right)  =\left\vert A\right\vert $, by Theorem \ref{th3}\emph{(i)}. Hence we
have%
\[
\alpha\left(  G\left[  X\right]  \right)  +\alpha\left(  G-X\right)
=\left\vert A\right\vert +\left\vert S-A\right\vert =\alpha\left(  G\right)
.
\]
In addition,\emph{ }Theorem \ref{Theorem1}\emph{(i)} and Theorem
\ref{th715}\emph{(iii) }imply that $\mu\left(  G\left[  X\right]  \right)
=\left\vert N(A)\right\vert $.

Finally, we obtain
\begin{align*}
d\left(  G\right)   &  =\max\left\{  \left\vert I\right\vert -\left\vert
N(I)\right\vert :I\in\mathrm{Ind}(G)\right\}  =\left\vert A\right\vert
-\left\vert N(A)\right\vert =\\
&  =\alpha\left(  G\left[  X\right]  \right)  -\mu\left(  G\left[  X\right]
\right)  \geq\alpha\left(  G\right)  -\mu\left(  G\right)  ,
\end{align*}
and this completes the proof.
\end{proof}

The graph $G$ from Figure \ref{fig24} has $\mathrm{\ker}(G)=\left\{
a,b,c\right\}  $. Notice that $\mathrm{\ker}(G)\subseteq\emph{core}(G)$;
$S=\left\{  a,b,c,v\right\}  $ is a largest critical independent set, and
neither $S\subseteq\mathrm{core}(G)$ nor $\emph{core}(G)\subseteq S$. In
addition, $\mathrm{core}(G)$ is not a critical set of $G$. 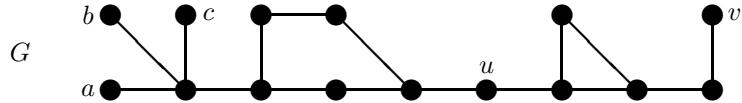
\begin{figure}[h]
\setlength{\unitlength}{1cm}\begin{picture}(5,1.2)\thicklines
\multiput(3,0)(1,0){9}{\circle*{0.29}}
\multiput(3,1)(1,0){4}{\circle*{0.29}}
\multiput(9,1)(2,0){2}{\circle*{0.29}}
\put(3,0){\line(1,0){8}}
\put(4,0){\line(0,1){1}}
\put(3,1){\line(1,-1){1}}
\put(5,0){\line(0,1){1}}
\put(5,1){\line(1,0){1}}
\put(6,1){\line(1,-1){1}}
\put(9,0){\line(0,1){1}}
\put(9,1){\line(1,-1){1}}
\put(11,0){\line(0,1){1}}
\put(2.7,0){\makebox(0,0){$a$}}
\put(2.7,1){\makebox(0,0){$b$}}
\put(4.3,1){\makebox(0,0){$c$}}
\put(8,0.3){\makebox(0,0){$u$}}
\put(11.3,1){\makebox(0,0){$v$}}
\put(1.8,0.5){\makebox(0,0){$G$}}
\end{picture}\caption{$G$ is a non-K\"{o}nig-Egerv\'{a}ry graph with
\textrm{core}$(G)=\{a,b,c,u\}$.}%
\label{fig24}%
\end{figure}

\begin{proposition}
\cite{LevMan2013a}\label{prop9} For a vertex $v$ in a graph $G$, the following
assertions hold:

\emph{(i)} $d(G-v)=d(G)-1$ if and only if $v\in\ker(G)$;

\emph{(ii)} if $v\in\ker(G)$, then $\ker(G-v)\subseteq\ker(G)-\{v\}$.
\end{proposition}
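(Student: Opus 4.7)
The plan is to establish (i) first and then deduce (ii) from it via a uniform ``remove $v$ from each critical set'' argument, exploiting that $v$ lies in every critical independent set.

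For (i), the forward direction ($d(G-v)=d(G)-1\Rightarrow v\in\ker(G)$) I would handle by contrapositive: if $v\notin\ker(G)$, some critical independent set $A$ of $G$ misses $v$, so $A$ remains independent in $G-v$ with $N_{G-v}(A)\subseteq N_G(A)$, giving $d(G-v)\geq d_{G-v}(A)\geq d_G(A)=d(G)$ and ruling out $d(G-v)=d(G)-1$. For the converse, writing $K:=\ker(G)$, the lower bound $d(G-v)\geq d(G)-1$ is immediate from the witness $K\setminus\{v\}$, which is independent in $G-v$ and satisfies $N_{G-v}(K\setminus\{v\})\subseteq N_G(K)$. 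The upper bound $d(G-v)\leq d(G)-1$ is the delicate step: assuming toward a contradiction that some $C$ is critical in $G-v$ with $d_{G-v}(C)\geq d(G)$, the identity $|N_G(C)|=|N_{G-v}(C)|+[v\in N_G(C)]$ (since $v\notin C$) gives $d_G(C)=d_{G-v}(C)-[v\in N_G(C)]$. If $v\notin N_G(C)$, then $C$ is already a critical independent set of $G$ avoiding $v$, contradicting $v\in K$. In the remaining case $v\in N_G(C)$ and $d_G(C)=d(G)-1$; here I would invoke the submodularity of $d$,
\[
d(A\cup B)+d(A\cap B)\geq d(A)+d(B),
\]
applied to $C$ and $K$, to obtain $d(C\cup K)+d(C\cap K)\geq 2d(G)-1$. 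If $d(C\cap K)=d(G)$, then $C\cap K$ is a critical independent set contained in $K$, so by Theorem~\ref{th4}(iii), $K\subseteq C\cap K\subseteq C$, forcing $v\in C$ --- contradiction. Hence $d(C\cup K)=d(G)$, and I would finish by extracting an independent subset of $C\cup K$ of the same $d$-value avoiding $v$, obtained by deleting a K\"{o}nig-type vertex cover of the bipartite ``crossing edges'' between $C\setminus K$ and $K\setminus C$, whose existence is guaranteed by the matching in Theorem~\ref{th3}(iii) applied to $K$.

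For (ii), I combine (i) with the minimality of $\ker$ furnished by Theorem~\ref{th4}(iii). Let $A$ be any critical independent set of $G$; since $v\in K\subseteq A$, the set $A':=A\setminus\{v\}$ is independent in $G-v$, and because $A$ is independent we have $v\notin N_G(A')$, whence $N_{G-v}(A')=N_G(A')\subseteq N_G(A)$. Therefore
\[
d_{G-v}(A')\geq(|A|-1)-|N_G(A)|=d(G)-1=d(G-v)
\]
by part (i), forcing $A'$ to be critical in $G-v$. Theorem~\ref{th4}(iii) applied in $G-v$ gives $\ker(G-v)\subseteq A'=A\setminus\{v\}$. Intersecting over all critical independent sets $A$ of $G$, each of which contains $v$, yields $\ker(G-v)\subseteq\bigcap_{A}(A\setminus\{v\})=\ker(G)\setminus\{v\}$, as required.

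The main obstacle I expect is the final cleanup in the backward direction of (i): converting the numerical equality $d(C\cup K)=d(G)$ --- valid for the possibly non-independent set $C\cup K$ --- into the existence of an honest \emph{independent} set with $d$-value $d(G)$ that avoids $v$. Once that technical step is pinned down, everything else reduces to careful bookkeeping of open neighborhoods combined with the kernel minimality in Theorem~\ref{th4}(iii).
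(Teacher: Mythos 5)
The paper itself offers no proof of Proposition \ref{prop9}; it is imported from \cite{LevMan2013a}, so there is no in-text argument to compare yours against. Judged on its own, your treatment of part (ii), of the forward direction of (i), and of the lower bound $d(G-v)\geq d(G)-1$ is correct and complete. The supermodularity step is also legitimate: with the paper's definition $d(G)=\max\{d(X):X\subseteq V(G)\}$, the bounds $d(C\cup K)\leq d(G)$ and $d(C\cap K)\leq d(G)$ hold even though $C\cup K$ need not be independent, and the case $d(C\cap K)=d(G)$ is correctly eliminated by the minimality of $\ker(G)$ from Theorem \ref{th4}(iii).

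The genuine gap is exactly where you flag it. Your proposed mechanism --- delete a vertex cover $T$ of the crossing edges between $C\setminus K$ and $K\setminus C$ --- does not deliver the inequality you need: removing $T$ lowers $\left\vert C\cup K\right\vert$ by $\left\vert T\right\vert$, but nothing guarantees that $\left\vert N((C\cup K)-T)\right\vert$ drops by at least $\left\vert T\right\vert$, since a vertex of $T$ may well remain a neighbor of what is left, and the matching from Theorem \ref{th3}(iii) applied to $K$ does not control this. The standard repair is the isolated-vertex extraction that underlies the equality $\max\{d(X):X\subseteq V\}=\max\{d(I):I\in\mathrm{Ind}(G)\}$: for any $X\subseteq V$, let $I$ be the set of vertices of $X$ having no neighbor inside $X$; then $I$ is independent, and $X-I$ and $N(I)$ are disjoint subsets of $N(X)$, whence $d(I)\geq d(X)$. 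Applying this to $X=C\cup K$ with $d(X)=d(G)$ yields a critical independent set $I$, and $v\notin I$ automatically because in your remaining case $v\in N_G(C)$, so $v\in K$ has a neighbor in $C\subseteq X$. That $I$ contradicts $v\in\ker(G)$ and closes the argument; with this substitution your proof is complete.
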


\begin{theorem}
\cite{LevMan2013a} \label{Th11} For a critical independent set $A$ in a graph
$G$, the equality $A=$ $\ker(G)$ holds if and only if for each $v\in A$ there
exists a matching from $N(A)$ into $A-\left\{  v\right\}  $.
\end{theorem}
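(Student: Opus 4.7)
The plan is to leverage Theorem \ref{th4}\emph{(iii)}, which identifies $\ker(G)$ as the unique minimum critical independent set, together with Hall's marriage theorem and the matching from $N(S)$ into $S$ guaranteed for every critical $S$ by Theorem \ref{th3}\emph{(iii)}.

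For the forward implication, I would fix $v\in A=\ker(G)$ and assume, toward a contradiction, that no matching from $N(A)$ into $A-\{v\}$ exists. Hall's theorem then supplies some $T\subseteq N(A)$ with $|N(T)\cap(A-\{v\})|<|T|$, while Theorem \ref{th3}\emph{(iii)} provides a matching from $N(A)$ into $A$ and hence $|N(T)\cap A|\geq |T|$. These two inequalities force $v\in N(T)$ and $|N(T)\cap A|=|T|$. Writing $D=N(T)\cap A$, every $t\in T$ has all of its $A$-neighbors inside $D$, so $T\cap N(A-D)=\emptyset$, whence $|N(A-D)|\leq |N(A)|-|T|$. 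Combined with $|A-D|=|A|-|T|$ this gives $|A-D|-|N(A-D)|\geq |A|-|N(A)|=d(G)$, so $A-D$ is a critical independent set. Since $v\in D$, however, $A-D\subsetneq A$, contradicting the minimality of $\ker(G)$ recorded in Theorem \ref{th4}\emph{(iii)}.

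For the reverse implication, I would show that $A$ admits no proper critical subset, after which $A=\ker(G)$ follows from Theorem \ref{th4}\emph{(iii)}. Suppose $A'\subsetneq A$ were critical. Then $|A'|-|N(A')|=|A|-|N(A)|=d(G)$ together with $N(A')\subseteq N(A)$ yields $|N(A)-N(A')|=|A|-|A'|=:k\geq 1$, and each $w\in N(A)-N(A')$ has all of its $A$-neighbors in $A-A'$. Choose $v\in A-A'$; the hypothesis produces a matching $M_v$ from $N(A)$ into $A-\{v\}$. Restricted to $N(A)-N(A')$, this matching injects $k$ vertices into $(A-A')-\{v\}$, a set of size $k-1$ --- an immediate contradiction.

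The main obstacle is the forward direction: the hypothesis delivers only a Hall-defect witness, and one must manufacture from it a strictly smaller critical subset of $A$. The key observation is that the pre-existing matching of Theorem \ref{th3}\emph{(iii)} forces the Hall defect on $A-\{v\}$ to be exactly one, which both pins $v$ inside $N(T)\cap A$ and ensures that deleting $N(T)\cap A$ from $A$ preserves the critical-difference value $d(G)$.
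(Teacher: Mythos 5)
Your proof is correct. Note, however, that the paper does not prove Theorem \ref{Th11} at all: it is imported verbatim from \cite{LevMan2013a}, so there is no in-paper argument to compare yours against. Taken on its own terms, your argument is complete. In the forward direction, the combination of the Hall defect on $A-\{v\}$ with the matching from $N(A)$ into $A$ guaranteed by Theorem \ref{th3}\emph{(iii)} correctly pins the defect at exactly one, forces $v\in N(T)$, and the set $D=N(T)\cap A$ satisfies $|A-D|-|N(A-D)|\geq d(G)$ because $N(A-D)\subseteq N(A)-T$; this produces a critical independent set properly contained in $\ker(G)$, contradicting the fact that $\ker(G)$, being the intersection of all critical independent sets, is contained in every one of them (Theorem \ref{th4}\emph{(iii)}). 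In the reverse direction, the counting argument that a proper critical subset $A'$ forces $k$ vertices of $N(A)-N(A')$ to be matched injectively into the $(k-1)$-element set $(A-A')-\{v\}$ is clean and airtight; and ruling out proper critical subsets of $A$ does yield $A=\ker(G)$, since $\ker(G)$ is itself a critical subset of $A$. The only stylistic remark is that you invoke Theorem \ref{th4}\emph{(iii)} where what you really use is the containment of $\ker(G)$ in every critical independent set, which is immediate from its definition as their intersection together with Theorem \ref{th4}\emph{(ii)}; it would be worth saying so explicitly.
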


\begin{lemma}
\label{lem4}If $A$ is a vertex subset of a graph $G$, and $C\subseteq
B\subseteq A$ are such that $N\left(  B\right)  =N\left(  B-C\right)  $, then
$N\left(  A\right)  =N\left(  A-C\right)  $.
\end{lemma}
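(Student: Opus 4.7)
The plan is to prove the set equality $N(A)=N(A-C)$ by establishing the two containments separately. The inclusion $N(A-C)\subseteq N(A)$ is free from the monotonicity of the neighborhood operator, since $A-C\subseteq A$ immediately yields $N(A-C)\subseteq N(A)$. So the content of the lemma is the reverse inclusion $N(A)\subseteq N(A-C)$, and this is what I would focus the argument on.

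To establish $N(A)\subseteq N(A-C)$, I would fix an arbitrary $v\in N(A)$ and pick some $a\in A$ with $va\in E(G)$, splitting into two cases according to whether $a\in C$ or not. If $a\in A-C$, then $v\in N(A-C)$ directly. If instead $a\in C$, then using $C\subseteq B$ I get $v\in N(C)\subseteq N(B)$, and the hypothesis $N(B)=N(B-C)$ promotes this to $v\in N(B-C)$. At this point I invoke the relation $B\subseteq A$ from the given chain $C\subseteq B\subseteq A$, which gives $B-C\subseteq A-C$ and hence, again by monotonicity, $v\in N(A-C)$. In both cases $v\in N(A-C)$, completing the inclusion.

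The argument is purely a bookkeeping exercise in set membership, so there is no substantive obstacle. The only subtlety worth mentioning explicitly is the step $B-C\subseteq A-C$, which depends on having $B\subseteq A$ (and not merely $C\subseteq B$); this is where the full chain $C\subseteq B\subseteq A$ in the hypothesis is used. I would present the proof in just a few lines, without introducing any additional machinery beyond the definitions of $N(\cdot)$ and monotonicity.
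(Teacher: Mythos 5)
Your proof is correct, and it is essentially the paper's argument rendered element-by-element: the paper simply writes the one-line equality chain $N(A-C)=N(B-C)\cup N(A-B)=N(B)\cup N(A-B)=N(A)$, which packages your two inclusions and your case split (witness in $C$ versus not) into the union decomposition $A-C=(B-C)\cup(A-B)$. The difference is purely presentational, so nothing further is needed.
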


\begin{proof}
The result follows from the equalities
\[
N\left(  A-C\right)  =N\left(  B-C\right)  \cup N\left(  A-B\right)  =N\left(
B\right)  \cup N\left(  A-B\right)  =N\left(  A\right)  .
\]

\end{proof}

\begin{corollary}
\label{cor118}If $v\in\ker(G)\subseteq A$, then $N\left(  A\right)  =N\left(
A-\left\{  v\right\}  \right)  $.
\end{corollary}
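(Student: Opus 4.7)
The plan is to reduce the claim, via Lemma \ref{lem4}, to the special case $A = \ker(G)$. Setting $B = \ker(G)$ and $C = \{v\}$, we have $C \subseteq B \subseteq A$ by hypothesis, so once we establish
\[
N(\ker(G)) = N(\ker(G) - \{v\}),
\]
Lemma \ref{lem4} will immediately deliver $N(A) = N(A - \{v\})$. Thus all the work sits in proving this equality for $\ker(G)$ itself.

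First I would observe the easy inclusion $N(\ker(G) - \{v\}) \subseteq N(\ker(G))$, which holds trivially since the neighborhood of a subset is contained in the neighborhood of the superset. The substantive direction is the reverse inclusion. The key leverage is Theorem \ref{Th11}: because $\ker(G)$ is itself a critical independent set (in fact the unique minimal one, by Theorem \ref{th4}(iii)), for every $v \in \ker(G)$ there exists a matching $M_v$ from $N(\ker(G))$ into $\ker(G) - \{v\}$.

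The main (and only real) step is then a short argument by contradiction. Suppose some $u \in N(\ker(G)) \setminus N(\ker(G) - \{v\})$ exists. Then $u$'s only neighbor in $\ker(G)$ is $v$. But $M_v$ saturates $u$ by matching it to some $w \in \ker(G) - \{v\}$, which forces $uw \in E(G)$ and hence $u \in N(w) \subseteq N(\ker(G) - \{v\})$, contradicting the choice of $u$. This yields $N(\ker(G)) \subseteq N(\ker(G) - \{v\})$, completing the equality.

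I expect no real obstacle here: the proof is essentially a one-line application of Lemma \ref{lem4} to a one-line consequence of Theorem \ref{Th11}. The only point that requires care is recognizing that Theorem \ref{Th11} applies to $\ker(G)$ itself — i.e., that $\ker(G)$ is a critical independent set, which is guaranteed by Theorem \ref{th4}(iii). Once that is invoked, the matching $M_v$ produced by Theorem \ref{Th11} does all the combinatorial work, and Lemma \ref{lem4} does all the bookkeeping needed to extend the conclusion from $\ker(G)$ to an arbitrary $A \supseteq \ker(G)$.
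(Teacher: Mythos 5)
Your proof is correct and follows essentially the same route as the paper: apply Theorem \ref{Th11} (noting via Theorem \ref{th4}(iii) that $\ker(G)$ is critical) to obtain $N(\ker(G))=N(\ker(G)-\{v\})$, then invoke Lemma \ref{lem4} with $B=\ker(G)$, $C=\{v\}$. You merely make explicit the small step—left implicit in the paper—that the matching from $N(\ker(G))$ into $\ker(G)-\{v\}$ forces every vertex of $N(\ker(G))$ to have a neighbor in $\ker(G)-\{v\}$.
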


\begin{proof}
Since $v\in\ker(G)$, by Theorem \ref{Th11}, $N\left(  \ker(G)\right)
=N\left(  \ker(G)-\left\{  v\right\}  \right)  $. Now, by Lemma \ref{lem4}, we
obtain $N\left(  A-\left\{  v\right\}  \right)  =N\left(  A\right)  $.
\end{proof}

\begin{proposition}
\label{prop13}Let $H=G\left[  N\left[  A\right]  \right]  $, where
$\ker\left(  G\right)  \subseteq A\subseteq V\left(  G\right)  $ for some
graph $G$. Then the following assertions are true:

\emph{(i) }$d\left(  H\right)  \geq d\left(  G\right)  $.

\emph{(ii) }If $A$ is a critical independent set in $G$, then $d\left(
H\right)  =d\left(  G\right)  $ and $\ker\left(  H\right)  =\ker\left(
G\right)  $.
\end{proposition}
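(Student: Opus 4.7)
The plan is to treat $\ker(G)$ as a universal test set and let the structure theorems of Theorem \ref{Theorem1} and Theorem \ref{th4} do the heavy lifting. The recurring bookkeeping point is that whenever an independent subset $B$ we care about happens to sit inside $A$, none of its $G$-neighbors can escape $N[A] = V(H)$, so $N_H(B) = N_G(B)$, and critical-difference calculations in $G$ and in $H$ agree.

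For (i), I would exhibit $\ker(G)$ itself as a witness. Since $\ker(G) \subseteq A$, every $G$-neighbor of $\ker(G)$ lies in $N_G(A) \subseteq N[A] = V(H)$, so $N_H(\ker(G)) = N_G(\ker(G))$. The defining identity $d(G) = |\ker(G)| - |N_G(\ker(G))|$ therefore transfers verbatim to $H$, giving $d(H) \geq |\ker(G)| - |N_H(\ker(G))| = d(G)$.

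For (ii), I would first pin down $d(H) = d(G)$. Because $A$ is independent and critical, $X = A \cup N(A) = N[A] = V(H)$, so Theorem \ref{Theorem1}\emph{(i)} tells us $H$ is a K\"onig-Egerv\'ary graph with $\alpha(H) = |A|$ and $\mu(H) = |N(A)|$. Applying Theorem \ref{th2}\emph{(iii)} to $H$, we get $d(H) = \alpha(H) - \mu(H) = |A| - |N(A)| = d(G)$, the last equality because $A$ is critical in $G$. Combined with (i), this pins down $d(H) = d(G)$.

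The equality $\ker(H) = \ker(G)$ then splits into two inclusions. The direction $\ker(H) \subseteq \ker(G)$ is immediate: the computation of (i) already showed that $\ker(G)$ is a critical independent set in $H$, so by the minimality part of Theorem \ref{th4}\emph{(iii)}, $\ker(H) \subseteq \ker(G)$. The reverse inclusion is the step I expect to be the most delicate, and I would handle it by showing $\ker(H)$ is critical in $G$. The key observation is that $A \in \Omega(H)$ (since $\alpha(H) = |A|$), hence $\ker(H) \subseteq \mathrm{core}(H) \subseteq A$ by Theorem \ref{th4}\emph{(i)}. Being contained in $A$, the set $\ker(H)$ has all its $G$-neighbors inside $N_G(A) \subseteq V(H)$, so $N_G(\ker(H)) = N_H(\ker(H))$; consequently $|\ker(H)| - |N_G(\ker(H))| = d(H) = d(G)$, which makes $\ker(H)$ critical in $G$, and Theorem \ref{th4}\emph{(iii)} yields $\ker(G) \subseteq \ker(H)$. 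The main obstacle is just the neighborhood-preservation argument $N_G(\cdot) = N_H(\cdot)$ for subsets of $A$; once that is isolated, both parts reduce to one-line manipulations of the defining identities.
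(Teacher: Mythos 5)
Your proof is correct and follows essentially the same route as the paper's: witness $\ker(G)$ inside $H$ for part \emph{(i)}, use Theorem \ref{Theorem1}\emph{(i)} (plus Theorem \ref{th2}\emph{(iii)}) to get $d(H)=d(G)$, and then obtain $\ker(H)=\ker(G)$ by showing each of $\ker(G)$ and $\ker(H)$ is critical in the other graph via the neighborhood-preservation observation $N_H(B)=N_G(B)$ for $B\subseteq A$. The only cosmetic difference is that you reach $\ker(H)\subseteq A$ through $\mathrm{core}(H)$ and $A\in\Omega(H)$, whereas the paper gets it from the already-established inclusion $\ker(H)\subseteq\ker(G)\subseteq A$; both are fine.
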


\begin{proof}
\emph{(i) }By the definition\emph{ }of the graph $H$, we know that
\[
N_{H}\left[  \ker\left(  G\right)  \right]  =N_{G}\left[  \ker\left(
G\right)  \right]  \subseteq N_{G}\left[  A\right]  =V\left(  H\right)  .
\]

Hence, we obtain
\[
d\left(  H\right)  =d_{H}\left(  \ker\left(  H\right)  \right)  \geq
d_{H}\left(  \ker\left(  G\right)  \right)  =d_{G}\left(  \ker\left(
G\right)  \right)  =d\left(  G\right)  .
\]

\emph{(ii) }By Theorem \ref{Theorem1}(\emph{i}), $H$ is a
K\"{o}nig-Egerv\'{a}ry graph. Therefore,%
\[
d\left(  G\right)  =d_{G}\left(  A\right)  =d_{H}\left(  A\right)  =d\left(
H\right)  .
\]
Thus, $d\left(  G\right)  =d\left(  H\right)  $. Consequently, $\ker\left(
G\right)  $ is a critical set in $H$ (because $d_{H}\left(  \ker\left(
G\right)  \right)  =d\left(  H\right)  $), which implies $\ker\left(
H\right)  \subseteq\ker\left(  G\right)  $. Moreover, $\ker\left(  H\right)  $
is a critical set in $G$ (since $d_{G}\left(  \ker\left(  H\right)  \right)
=d\left(  G\right)  $), and this implies $\ker\left(  G\right)  \subseteq
\ker\left(  H\right)  $. Therefore, $\ker\left(  H\right)  =\ker\left(
G\right)  $, as required.
\end{proof}

If $G$ is a K\"{o}nig-Egerv\'{a}ry graph, then, by Theorem \ref{th4}%
\emph{(ii)} and\emph{ }Theorem\emph{ }\ref{th715}\emph{(iii)}, $\emph{core}%
\left(  G\right)  $ is critical in $G$. Together with Proposition
\ref{prop13}\emph{(ii) }it implies the following.

\begin{corollary}
Let $H=G\left[  N\left[  \mathrm{core}\left(  G\right)  \right]  \right]  $.
Then the equalities $d\left(  H\right)  =d\left(  G\right)  $ and $\ker\left(
H\right)  =\ker\left(  G\right)  $ hold for every K\"{o}nig-Egerv\'{a}ry graph
$G$.
\end{corollary}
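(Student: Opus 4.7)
The plan is to deduce this corollary directly from Proposition \ref{prop13}(ii) by checking that $A=\mathrm{core}(G)$ satisfies the required hypotheses. Proposition \ref{prop13}(ii) needs two things of $A$: (a) $\ker(G)\subseteq A\subseteq V(G)$, and (b) $A$ is a critical independent set of $G$. Condition (a) is immediate from Theorem \ref{th4}(i), which tells us $\ker(G)\subseteq\mathrm{core}(G)$, so the only real content is verifying (b) for our particular $G$.

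For step (b), I would use the K\"onig-Egerv\'ary hypothesis on $G$ crucially. By Theorem \ref{th715}(iv), every $S\in\Omega(G)$ is a critical independent set. Since $\mathrm{core}(G)=\bigcap\{S:S\in\Omega(G)\}$ is an intersection of independent sets, it is itself independent. And by iterating Theorem \ref{th4}(ii), which states that the intersection of critical sets is critical, we conclude that $\mathrm{core}(G)$ is critical. (The paper in fact flags this observation in the paragraph immediately preceding the corollary, so I can simply cite it.) This is really the only non-cosmetic step of the proof.

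Once (a) and (b) are verified, Proposition \ref{prop13}(ii) applies with $A=\mathrm{core}(G)$ and $H=G[N[\mathrm{core}(G)]]$, yielding at once the two desired equalities $d(H)=d(G)$ and $\ker(H)=\ker(G)$. I do not anticipate any obstacle here, since both the hypothesis-checking and the invocation of Proposition \ref{prop13}(ii) are routine; the corollary is essentially a labeled specialization. The only mildly delicate point is remembering that we need the K\"onig-Egerv\'ary assumption precisely to ensure $\mathrm{core}(G)$ is critical, without which the whole argument collapses (as illustrated by the non-K\"onig-Egerv\'ary example in Figure \ref{fig24}, where $\mathrm{core}(G)$ fails to be critical).
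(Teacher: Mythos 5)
Your proposal is correct and follows essentially the same route as the paper: the preceding paragraph in the paper derives the criticality of $\mathrm{core}(G)$ from Theorem \ref{th4}\emph{(ii)} together with Theorem \ref{th715} (you correctly cite part \emph{(iv)}, whereas the paper's reference to part \emph{(iii)} appears to be a typo), and then invokes Proposition \ref{prop13}\emph{(ii)} exactly as you do. Your explicit check that $\ker(G)\subseteq\mathrm{core}(G)$ via Theorem \ref{th4}\emph{(i)} and your remark on Figure \ref{fig24} are correct, if left implicit in the paper.
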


Consider the graphs in Figure \ref{Fig1444}: $\left\{  x,y,z\right\}  $ is
independent in $G_{1}$ and $d\left(  G_{1}\left[  N\left[  \left\{
x,y,z\right\}  \right]  \right]  \right)  >d\left(  G_{1}\right)  $, while
$\left\{  a,b,c\right\}  $ is independent in $G_{2}$\ and $d\left(
G_{2}\left[  N\left[  \left\{  a,b,c\right\}  \right]  \right]  \right)
>d\left(  G_{2}\right)  $.

\begin{figure}[h]
\setlength{\unitlength}{1cm}\begin{picture}(5,1.2)\thicklines
\multiput(3,0)(1,0){5}{\circle*{0.29}}
\multiput(4,1)(1,0){3}{\circle*{0.29}}
\put(3,0){\line(1,0){4}}
\put(4,0){\line(0,1){1}}
\put(5,0){\line(0,1){1}}
\put(5,1){\line(1,0){1}}
\put(6,1){\line(1,-1){1}}
\put(3,0.35){\makebox(0,0){$x$}}
\put(3.65,1){\makebox(0,0){$y$}}
\put(4.65,0.35){\makebox(0,0){$z$}}
\put(2.3,0.5){\makebox(0,0){$G_{1}$}}
\multiput(9,0)(1,0){4}{\circle*{0.29}}
\multiput(10,1)(1,0){3}{\circle*{0.29}}
\put(9,0){\line(1,0){3}}
\put(10,0){\line(0,1){1}}
\put(11,1){\line(1,0){1}}
\put(11,0){\line(0,1){1}}
\put(12,0){\line(0,1){1}}
\put(9,0.35){\makebox(0,0){$a$}}
\put(9.65,1){\makebox(0,0){$b$}}
\put(10.65,0.35){\makebox(0,0){$c$}}
\put(8.3,0.5){\makebox(0,0){$G_{2}$}}
\end{picture}\caption{$d\left(  G_{1}\right)  =d\left(  G_{2}\right)  =1$,
while only $G_{1}$ is not a K\"{o}nig-Egerv\'{a}ry graph.}%
\label{Fig1444}%
\end{figure}
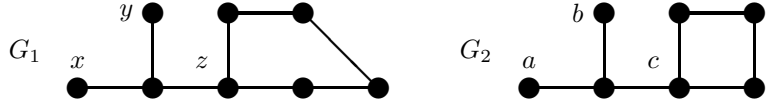

\begin{lemma}
\label{lem6}Let $A$ and $S$ be independent sets in a graph $G$, such that $A$
is critical and $A\subseteq S$. Then there is a matching from $S-A$ into
$V\left(  G\right)  -S-N(A)$.
\end{lemma}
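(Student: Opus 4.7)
The plan is to apply Hall's Marriage Theorem to the bipartite graph $B$ whose parts are $S-A$ and $V(G)-S-N(A)$, with edges being exactly those of $G$ connecting these two sets. If Hall's condition holds, then the desired matching from $S-A$ into $V(G)-S-N(A)$ exists.

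To verify Hall's condition, I would fix an arbitrary $T \subseteq S-A$ and estimate the size of $N_G(T) \cap (V(G)-S-N(A))$. Since $S$ is independent, $N_G(T) \cap S = \emptyset$, so
\[
N_G(T) \cap (V(G)-S-N(A)) = N_G(T) \setminus N_G(A).
\]
Thus the task reduces to showing $\lvert N_G(T) \setminus N_G(A)\rvert \geq \lvert T\rvert$.

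The key step is to leverage the criticality of $A$ against the independent set $A \cup T$. Since $T \subseteq S - A$ and $S$ is independent, $A \cup T$ is independent and $\lvert A \cup T\rvert = \lvert A\rvert + \lvert T\rvert$. By definition of the critical difference,
\[
\lvert A\rvert - \lvert N(A)\rvert = d(G) \geq \lvert A \cup T\rvert - \lvert N(A \cup T)\rvert.
\]
Using $N(A \cup T) = N(A) \cup N(T)$ and therefore $\lvert N(A \cup T)\rvert = \lvert N(A)\rvert + \lvert N(T) \setminus N(A)\rvert$, substituting and simplifying yields precisely
\[
\lvert N(T) \setminus N(A)\rvert \geq \lvert T\rvert,
\]
which is Hall's condition for $B$.

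There is no serious obstacle here: the argument is essentially a repetition of the first half of the proof of Theorem \ref{Theorem1}(ii), isolated as a standalone lemma for later reuse. The only minor care needed is to confirm that the neighborhood set we are matching into coincides with $N(T) \setminus N(A)$, which follows immediately from the independence of $S$.
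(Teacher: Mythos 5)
Your proof is correct and follows essentially the same route as the paper's: both verify Hall's condition for an arbitrary subset of $S-A$ by playing the criticality of $A$ against the independent set $A\cup T$ and using $\left\vert N(A\cup T)\right\vert =\left\vert N(A)\right\vert +\left\vert N(T)\setminus N(A)\right\vert$, together with the observation that $N(T)$ avoids $S$ because $S$ is independent. No issues.
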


\begin{proof}
Let $B\subseteq S-A$. Then $\left\vert A\cup B\right\vert -\left\vert N(A\cup
B)\right\vert \leq\left\vert A\right\vert -\left\vert N(A)\right\vert $,
because $A$ is critical. Hence,
\[
\left\vert B\right\vert =\left\vert A\cup B\right\vert -\left\vert
A\right\vert \leq\left\vert N(A\cup B)\right\vert -\left\vert N(A)\right\vert
\leq\left\vert N(B)\cap\left(  V\left(  G\right)  -S-N(A)\right)  \right\vert
,
\]
which is the Hall condition between $S-A$ and $V\left(  G\right)  -S-N(A)$.
Consequently, there is a matching from $S-A$ into $V\left(  G\right)  -S-N(A)$.
\end{proof}

\begin{proposition}
\label{prop17}Let $v\in\mathrm{core}\left(  G\right)  -\ker\left(  G\right)
$. Then the following assertions are true:

\emph{(i)} $\ker\left(  G\right)  $ is critical in $G-v$;

\emph{(ii)} $d\left(  G-v\right)  =d\left(  G\right)  $;

\emph{(iii)} $d\left(  G-v\right)  =\alpha\left(  G\right)  -\mu\left(
G\right)  $, if in addition, $G$ is a K\"{o}nig-Egerv\'{a}ry graph.
\end{proposition}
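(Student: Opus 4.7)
The plan is to prove part (ii) first, then derive (i) as a direct corollary, and obtain (iii) at once from (ii) via Theorem \ref{th2}(iii). The starting observation is that $\mathrm{core}(G)$, being an intersection of independent sets, is itself independent, and $\ker(G)\subseteq\mathrm{core}(G)$ by Theorem \ref{th4}(i); hence $v\in\mathrm{core}(G)$ has no neighbor in $\ker(G)$. Consequently $N_{G-v}(\ker(G))=N_G(\ker(G))$, which yields
\[
d_{G-v}(\ker(G))=|\ker(G)|-|N_G(\ker(G))|=d(G),
\]
so in particular $d(G-v)\geq d(G)$.

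The heart of the proof is the reverse inequality $d(G-v)\leq d(G)$, which I would establish by contradiction. Suppose an independent set $B$ of $G-v$ satisfies $d_{G-v}(B)>d(G)$, and regard $B$ as an independent set of $G$ as well. If $v\notin N_G(B)$, then $d_G(B)=d_{G-v}(B)>d(G)$, contradicting the maximality of $d(G)$; so $v$ must be adjacent to some vertex of $B$, and then $d_G(B)=d_{G-v}(B)-1\geq d(G)$, forcing $d_G(B)=d(G)$, i.e.\ $B$ is critical in $G$. By Theorem \ref{th3}(i) there is some $S\in\Omega(G)$ with $B\subseteq S$, and because $v\in\mathrm{core}(G)\subseteq S$ one has $B\cup\{v\}\subseteq S$, which contradicts the independence of $S$ (since $v$ has a neighbor in $B$). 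Hence $d(G-v)=d(G)$, which proves (ii).

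Given (ii), part (i) is immediate from the calculation above: the independent set $\ker(G)$ in $G-v$ attains $d_{G-v}(\ker(G))=d(G)=d(G-v)$, so $\ker(G)$ is critical in $G-v$. For (iii), the K\"{o}nig-Egerv\'{a}ry hypothesis supplies $d(G)=\alpha(G)-\mu(G)$ via Theorem \ref{th2}(iii), and combining this equality with (ii) finishes the proof.

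The main obstacle is the reverse-inequality step in (ii); that is precisely where the hypothesis $v\in\mathrm{core}(G)$ is indispensable, since embedding a hypothetical too-critical set $B$ together with $v$ into a common maximum independent set is what produces the adjacency contradiction. Proposition \ref{prop9}(i) by itself only rules out $d(G-v)=d(G)-1$, so some extra input beyond that is needed, and the \emph{core}-membership of $v$ is exactly the right lever.
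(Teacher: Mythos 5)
Your proof is correct and follows essentially the same route as the paper's: both rest on the observations that $v\notin N[\ker(G)]$ (so $d_{G-v}(\ker(G))=d(G)$) and that any independent set $B$ of $G-v$ with $d_{G-v}(B)>d(G)$ would have to be critical in $G$, whereupon Theorem \ref{th3}(i) together with $v\in\mathrm{core}(G)$ forces $B$ and $v$ into a common maximum independent set and yields the contradiction. The only difference is cosmetic: you prove (ii) first and read off (i), while the paper argues (i) directly and notes that (ii) is its numerical restatement.
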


\begin{proof}
\emph{(i) }Suppose, on the contrary, that $d_{G-v}(B)>d_{G-v}\left(
\ker\left(  G\right)  \right)  $ for some independent set $B\subseteq V\left(
G\right)  -\left\{  v\right\}  $. First, $v\notin N\left[  \ker\left(
G\right)  \right]  $, because $v\in\mathrm{core}\left(  G\right)  -\ker\left(
G\right)  $. Hence, $d_{G}\left(  \ker\left(  G\right)  \right)
=d_{G-v}\left(  \ker\left(  G\right)  \right)  $. Clearly, $d_{G}(B)+1\geq
d_{G-v}(B)$ for every independent set $B\subseteq V\left(  G\right)  -\left\{
v\right\}  $. Therefore, $B$ should be critical in $G$, otherwise,
\[
d_{G}(B)<d(G)=d_{G}\left(  \ker\left(  G\right)  \right)  =d_{G-v}\left(
\ker\left(  G\right)  \right)  <d_{G-v}(B)\leq d_{G}(B)+1,
\]
in other words,%
\[
d_{G}(B)+2\leq d(G)+1=d_{G}\left(  \ker\left(  G\right)  \right)
+1=d_{G-v}\left(  \ker\left(  G\right)  \right)  +1<d_{G-v}(B)+1\leq
d_{G}(B)+2,
\]
which is impossible.

Thus, by Theorem \ref{th3}, there exists $S\in\Omega\left(  G\right)  $, such
that $B\subseteq S$. It implies that $v\notin N_{G}\left[  B\right]  $, i.e.,
\[
d_{G-v}\left(  B\right)  =d_{G}\left(  B\right)  =d_{G}\left(  \ker\left(
G\right)  \right)  =d_{G-v}\left(  \ker\left(  G\right)  \right)  ,
\]
which contradicts the hypothesis on $B$.

\emph{(ii)} Numerically, Part \emph{(i) }says that\emph{ }$d\left(
G-v\right)  =d\left(  G\right)  $.

\emph{(iii)} Since $G$ is a K\"{o}nig-Egerv\'{a}ry graph, by Theorem
\ref{th2}\emph{(iii) }we know that $d\left(  G\right)  =\alpha\left(
G\right)  -\mu\left(  G\right)  $. Now Part \emph{(ii)} completes the proof.
\end{proof}

\section{The vertex heredity equality}

\begin{proposition}
Let $G$ be a K\"{o}nig-Egerv\'{a}ry graph and $v\in V(G)$ be such that $G-v$
is still a K\"{o}nig-Egerv\'{a}ry graph. Then $v\in\emph{core}\left(
G\right)  $ if and only if there exists a maximum matching that does not
saturate $v$.
\end{proposition}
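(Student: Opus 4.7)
The plan is to dispatch the two directions separately, with the backward direction following almost immediately from Theorem \ref{th715}(iii) and the forward direction reducing to a short arithmetic using the König-Egerváry equality applied to both $G$ and $G-v$.

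For the backward implication, suppose $M$ is a maximum matching of $G$ that does not saturate $v$. The plan is to argue by contradiction: if $v \notin \emph{core}(G)$, then there exists some $S \in \Omega(G)$ with $v \in V(G) - S$. Since $G$ is a König-Egerváry graph, Theorem \ref{th715}(iii) says that $M$ matches $V(G) - S$ into $S$. In particular, every vertex of $V(G) - S$, including $v$, is $M$-saturated, contradicting the choice of $M$. Hence $v$ belongs to every maximum independent set, i.e., $v \in \emph{core}(G)$. No use of the hypothesis that $G - v$ is K-E is needed here.

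For the forward implication, suppose $v \in \emph{core}(G)$, so that $v$ is $\alpha$-critical and $\alpha(G-v) = \alpha(G) - 1$. Using the König-Egerváry identity on both $G$ and $G-v$, I would compute
\[
\mu(G-v) = (n(G)-1) - \alpha(G-v) = (n(G)-1) - (\alpha(G)-1) = n(G) - \alpha(G) = \mu(G).
\]
A maximum matching $M'$ of $G-v$ is then a matching of $G$ of size $\mu(G)$ that leaves $v$ unsaturated, which is the required maximum matching.

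There is no real obstacle here: the assumption that both $G$ and $G-v$ are König-Egerváry is precisely what is needed to equate $\mu(G-v)$ with $\mu(G)$ via the simple subtraction above, and Theorem \ref{th715}(iii) essentially packages the ``only if'' direction for free. The only thing to double-check during write-up is that the hypothesis $G-v$ is König-Egerváry is genuinely used (it is used in the forward direction to upgrade $\mu(G-v) \geq \mu(G) - 1$ to equality) and that no degenerate case, such as $v$ being isolated, breaks the argument — but isolated vertices trivially lie in $\emph{core}(G)$ and are unsaturated by every matching, so the statement holds in that case as well.
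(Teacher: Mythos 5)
Your proposal is correct. The forward direction is identical to the paper's: both apply the K\"{o}nig-Egerv\'{a}ry identity to $G$ and to $G-v$, subtract to get $\mu(G-v)=\mu(G)$, and observe that a maximum matching of $G-v$ is then a maximum matching of $G$ missing $v$. The backward direction, however, takes a genuinely different route. The paper argues via the structural decomposition of Theorem \ref{th2}: it splits $V(G)-\mathrm{core}(G)$ into $N(\mathrm{core}(G))$, whose vertices are saturated because every maximum matching matches $N(\mathrm{core}(G))$ into $\mathrm{core}(G)$, and $V(G)-N[\mathrm{core}(G)]$, whose vertices are saturated because $G-N[\mathrm{core}(G)]$ carries a perfect matching inside every maximum matching of $G$. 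You instead pick, for $v\notin\mathrm{core}(G)$, a maximum independent set $S$ avoiding $v$ and invoke Theorem \ref{th715}\emph{(iii)} directly: every maximum matching matches $V(G)-S$ into $S$, so $v$ is saturated by every maximum matching --- contradiction. Your argument is shorter, avoids the case split, and relies only on the basic matching characterization of K\"{o}nig-Egerv\'{a}ry graphs rather than on the $\mathrm{core}$/perfect-matching decomposition; the paper's version has the mild advantage of locating exactly where the saturation comes from relative to $\mathrm{core}(G)$, which fits the surrounding development. Your observations that the hypothesis on $G-v$ is used only in the forward direction and that isolated vertices cause no trouble are both accurate.
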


\begin{proof}
Since $v\in$ $\emph{core}\left(  G\right)  $, it follows that $\alpha
(G-v)=\alpha(G)-1$. Consequently, we have $\alpha(G)+%
\mu
(G)-1=n(G)-1=n(G-v)=\alpha(G-v)+%
\mu
(G-v)$ which implies that $%
\mu
(G)=%
\mu
(G-v)$. In other words, there is a maximum matching in $G$ not saturating $v$.

Conversely, suppose that there exists a maximum matching in $G$ that does not
saturate $v$. Since, by Theorem \ref{th2}\emph{(i)}, $N(\emph{core}\left(
G\right)  )$ is matched into $\emph{core}\left(  G\right)  $ by every maximum
matching, it follows that $v\notin N(\emph{core}\left(  G\right)  )$. Assume
that $v\notin\emph{core}\left(  G\right)  $. By Theorem \ref{th2}\emph{(ii)},
$H=G-N[\emph{core}\left(  G\right)  ]$ is a K\"{o}nig-Egerv\'{a}ry graph, $H$
has a perfect matching and every maximum matching $M$ of $G$ is of the form
$M=M_{1}\cup M_{2}$, where $M_{1}$ matches $N(\emph{core}\left(  G\right)  )$
into $\emph{core}\left(  G\right)  $, while $M_{2}$ is a perfect matching of
$H$. Consequently, $v$ is saturated by every maximum matching of $G$, in
contradiction with the hypothesis on $v$.
\end{proof}

Consider the graphs from Figure \ref{fig55}. Notice that $G_{1}-v_{2}%
,G_{1}-v_{3},G_{1}-v_{4}$ are K\"{o}nig-Egerv\'{a}ry graphs, but $G_{1}-v_{1}$
is not a K\"{o}nig-Egerv\'{a}ry graph, while $G_{3}-x$ is a
K\"{o}nig-Egerv\'{a}ry graph for each $x\in V\left(  G_{3}\right)  $.

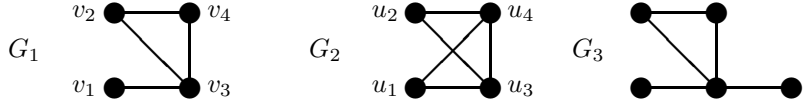
\begin{figure}[h]
\setlength{\unitlength}{1cm}\begin{picture}(5,1.2)\thicklines
\multiput(3,0)(1,0){2}{\circle*{0.29}}
\multiput(3,1)(1,0){2}{\circle*{0.29}}
\put(3,0){\line(1,0){1}}
\put(3,1){\line(1,0){1}}
\put(3,1){\line(1,-1){1}}
\put(4,0){\line(0,1){1}}
\put(2.6,0){\makebox(0,0){$v_{1}$}}
\put(2.6,1){\makebox(0,0){$v_{2}$}}
\put(4.4,0){\makebox(0,0){$v_{3}$}}
\put(4.4,1){\makebox(0,0){$v_{4}$}}
\put(1.8,0.5){\makebox(0,0){$G_{1}$}}
\multiput(7,0)(1,0){2}{\circle*{0.29}}
\multiput(7,1)(1,0){2}{\circle*{0.29}}
\put(7,0){\line(1,0){1}}
\put(7,0){\line(1,1){1}}
\put(7,1){\line(1,0){1}}
\put(7,1){\line(1,-1){1}}
\put(8,0){\line(0,1){1}}
\put(6.6,0){\makebox(0,0){$u_{1}$}}
\put(6.6,1){\makebox(0,0){$u_{2}$}}
\put(8.4,0){\makebox(0,0){$u_{3}$}}
\put(8.4,1){\makebox(0,0){$u_{4}$}}
\put(5.8,0.5){\makebox(0,0){$G_{2}$}}
\multiput(10,0)(1,0){3}{\circle*{0.29}}
\multiput(10,1)(1,0){2}{\circle*{0.29}}
\put(10,0){\line(1,0){2}}
\put(10,1){\line(1,0){1}}
\put(10,1){\line(1,-1){1}}
\put(11,0){\line(0,1){1}}
\put(9.3,0.5){\makebox(0,0){$G_{3}$}}
\end{picture}\caption{$G_{1}$, $G_{2}$ and $G_{3}$ are K\"{o}nig-Egerv\'{a}ry
graphs}%
\label{fig55}%
\end{figure}

\begin{theorem}
\label{th9}If $G$ is a K\"{o}nig-Egerv\'{a}ry graph, then the following
assertions are true:

\emph{(i)} $G-v$ is a K\"{o}nig-Egerv\'{a}ry graph for every $v\in V\left(
G\right)  -\mathrm{core}\left(  G\right)  $;

\emph{(ii)} $G-v$ is not a K\"{o}nig-Egerv\'{a}ry graph for every
$v\in\mathrm{core}\left(  G\right)  -\ker\left(  G\right)  $, moreover, $G-v$
is $1$-K\"{o}nig-Egerv\'{a}ry;

\emph{(iii)} $G-v$ is a K\"{o}nig-Egerv\'{a}ry graph for every $v\in
\ker\left(  G\right)  $;

\emph{(iv)} $\varrho_{v}\left(  G\right)  =n\left(  G\right)  -\xi\left(
G\right)  +\varepsilon\left(  G\right)  $.
\end{theorem}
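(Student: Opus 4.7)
My plan is to prove parts \emph{(i)}, \emph{(ii)}, \emph{(iii)} separately---they cover the disjoint partition $V(G) = (V(G) - \mathrm{core}(G)) \cup (\mathrm{core}(G) - \ker(G)) \cup \ker(G)$---and then obtain \emph{(iv)} by a direct count: the K\"{o}nig-Egerv\'{a}ry-friendly vertices are exactly those in the first and third classes, which yields $\varrho_v(G) = (n(G) - \xi(G)) + \varepsilon(G)$.

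For \emph{(i)}, since $v \notin \mathrm{core}(G)$, some $S \in \Omega(G)$ avoids $v$, hence $\alpha(G-v) = \alpha(G)$; combined with the trivial bound $\mu(G-v) \geq \mu(G) - 1$ and $\alpha(G) + \mu(G) = n(G)$, this forces $\alpha(G-v) + \mu(G-v) \geq n(G-v)$, and equality follows from the general upper bound $\alpha + \mu \leq n$. For \emph{(ii)}, $v \in \mathrm{core}(G)$ yields $\alpha(G-v) = \alpha(G) - 1$, so the two possible values of $\mu(G-v)$ leave $\alpha(G-v) + \mu(G-v) \in \{n(G-v),\, n(G-v) - 1\}$. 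I would rule out the K\"{o}nig-Egerv\'{a}ry alternative as follows: if $G-v$ were K\"{o}nig-Egerv\'{a}ry, Theorem \ref{th2}\emph{(iii)} applied to $G-v$ would give $d(G-v) = \alpha(G-v) - \mu(G-v) = d(G) - 1$, contradicting Proposition \ref{prop17}\emph{(ii)}, which states $d(G-v) = d(G)$ for $v \in \mathrm{core}(G) - \ker(G)$. Hence $G-v$ is $1$-K\"{o}nig-Egerv\'{a}ry.

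The technical heart is \emph{(iii)}, where I must produce a maximum matching of $G$ that avoids a prescribed vertex $v \in \ker(G)$. Set $A = \ker(G)$ and $X = A \cup N(A)$. Theorem \ref{Th11} supplies a matching $M_1$ from $N(A)$ into $A - \{v\}$; since Theorem \ref{Theorem1}\emph{(i)} (applied to the critical set $A$) gives $\mu(G[X]) = |N(A)|$, this $M_1$ is already a maximum matching of $G[X]$ that does not saturate $v$. Theorem \ref{Theorem1}\emph{(iv)} then allows me to extend $M_1$ by any maximum matching $M_2$ of $G-X$ to a maximum matching $M = M_1 \cup M_2$ of $G$; since $v \in A \subseteq X$, the vertex $v$ is not touched by $M_2$ either, so $M$ avoids $v$. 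Thus $\mu(G-v) = \mu(G)$, and combined with $\alpha(G-v) = \alpha(G) - 1$ (from $v \in \mathrm{core}(G)$) this gives $\alpha(G-v) + \mu(G-v) = n(G-v)$, i.e., $G-v$ is K\"{o}nig-Egerv\'{a}ry. I expect this matching construction to be the main obstacle: Theorem \ref{Th11} is what makes it possible to choose a maximum matching of $G[X]$ that skips the specified $v$, and Theorem \ref{Theorem1}\emph{(iv)} is what guarantees this skip survives when we glue to a matching of all of $G$.
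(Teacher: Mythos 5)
Your proof is correct. Parts \emph{(i)}, \emph{(ii)} and \emph{(iv)} follow essentially the same route as the paper: for \emph{(i)} both arguments rest on choosing $S\in\Omega(G)$ that avoids $v$ (your use of the trivial bound $\mu(G-v)\geq\mu(G)-1$ together with $\alpha+\mu\leq n$ is slightly cleaner bookkeeping than the paper's explicit computation of $\mu(G-v)$), and for \emph{(ii)} both derive the contradiction from Theorem \ref{th2}\emph{(iii)} combined with Proposition \ref{prop17} --- you contradict $d(G-v)=d(G)$ directly, the paper turns the same arithmetic into an impossible vertex count, and both then settle the $1$-K\"{o}nig-Egerv\'{a}ry claim by elimination. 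The genuine divergence is in \emph{(iii)}. The paper stays entirely on the critical-set side: using Proposition \ref{prop9}\emph{(i)} and Corollary \ref{cor118} it shows that for $S\in\Omega(G)$ the set $S-v$ satisfies $d_{G-v}(S-v)=d(G)-1=d(G-v)$, i.e., $S-v$ is a critical maximum independent set of $G-v$, and then invokes Larson's characterization, Theorem \ref{th715}\emph{(v)}. You instead work on the matching side: Theorem \ref{Th11} produces a maximum matching of $G\left[N\left[\ker(G)\right]\right]$ missing $v$, Theorem \ref{Theorem1}\emph{(iv)} extends it to a maximum matching of $G$ missing $v$, whence $\mu(G-v)=\mu(G)$, and the K\"{o}nig-Egerv\'{a}ry equality for $G-v$ is then checked numerically from $\alpha(G-v)=\alpha(G)-1$. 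Both are valid; yours makes the combinatorial mechanism explicit (deleting a kernel vertex costs one unit of $\alpha$ but nothing of $\mu$, which is the content of Corollary \ref{cor99}), while the paper's version is shorter once Corollary \ref{cor118} is in hand and does not need the decomposition machinery of Theorem \ref{Theorem1}\emph{(iv)}.
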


\begin{proof}
\textit{\emph{(i)}} Since $v\in V\left(  G\right)  -$ $\emph{core}\left(
G\right)  $, there exists some $S\in\Omega\left(  G\right)  $ such that $v\in
V\left(  G\right)  -S$.

By Theorem \ref{th715}\emph{(iii)},\ each maximum matching of $G$ matches
$V\left(  G\right)  -S$ into $S$, and consequently, $\mu(G)=\left\vert
V\left(  G\right)  -S\right\vert $.

Let $H=G-v$. Then $S\in\Omega\left(  H\right)  $ and $\mu\left(  H\right)
=\left\vert V\left(  G\right)  -S-\left\{  v\right\}  \right\vert $. Thus
\[
\alpha\left(  H\right)  +\mu\left(  H\right)  =\alpha\left(  G\right)
+\mu\left(  G\right)  -1=n\left(  G\right)  -1=\left\vert V\left(  H\right)
\right\vert .
\]
Hence, $H=G-v$ is a K\"{o}nig-Egerv\'{a}ry graph for each $v\in V\left(
G\right)  -$ $\mathrm{core}\left(  G\right)  $.

\emph{(ii)} Suppose that $G-v$ is K\"{o}nig-Egerv\'{a}ry. By Theorem
\ref{th2}\emph{(iii)} and Proposition \ref{prop17}\emph{(iii)},
\[
\alpha\left(  G-v\right)  -\mu\left(  G-v\right)  =d\left(  G-v\right)
=\alpha\left(  G\right)  -\mu\left(  G\right)  .
\]

Clearly, $\alpha\left(  G-v\right)  =\alpha\left(  G\right)  -1$, because
$v\in\mathrm{core}\left(  G\right)  $.

Hence,%
\[
\alpha\left(  G\right)  -1-\mu\left(  G-v\right)  =\alpha\left(  G\right)
-\mu\left(  G\right)  .
\]

Thus, $\mu\left(  G-v\right)  =\mu\left(  G\right)  -1$. Consequently,%
\[
n\left(  G-v\right)  =\alpha\left(  G-v\right)  +\mu\left(  G-v\right)
=\alpha\left(  G\right)  -1+\mu\left(  G\right)  -1=n\left(  G\right)
-2=n\left(  G-v\right)  -1\text{,}%
\]
which is impossible. Therefore, $G-v$ is not a K\"{o}nig-Egerv\'{a}ry graph.

If $\mu\left(  G-v\right)  =\mu\left(  G\right)  $, then
\[
\alpha\left(  G-v\right)  +\mu\left(  G-v\right)  =\alpha\left(  G\right)
-1+\mu\left(  G\right)  =n\left(  G\right)  -1=n\left(  G-v\right)  ,
\]
which is a contradiction, because $G-v$ is not K\"{o}nig-Egerv\'{a}ry. Thus,
$\mu\left(  G-v\right)  =\mu\left(  G\right)  -1$. Consequently, we have
\[
\alpha\left(  G-v\right)  +\mu\left(  G-v\right)  =\alpha\left(  G\right)
-1+\mu\left(  G\right)  -1=n\left(  G\right)  -2=n\left(  G-v\right)
-1\text{,}%
\]
which means that $G-v$ is a $1$-K\"{o}nig-Egerv\'{a}ry graph.

\emph{(iii)} By Proposition \ref{prop9}\emph{(i)}, $d(G-v)=d(G)-1$, because
$v\in\ker(G)$. Since $v\in\mathrm{core}\left(  G\right)  $, we get
$\alpha\left(  G-v\right)  =\alpha\left(  G\right)  -1$.

Let $S\in\Omega\left(  G\right)  $. Then $v\in S$ and $S-v\in\Omega\left(
G-v\right)  $, because $v\in\mathrm{core}\left(  G\right)  $.

By definition, $d_{G}(S)=\left\vert S\right\vert -\left\vert N_{G}\left(
S\right)  \right\vert $. Moreover, $d(G)=$ $d_{G}(S)$, because, by Theorem
\ref{th715}\emph{(iv)}, each maximum independent set of a
K\"{o}nig-Egerv\'{a}ry graph is critical.

It is clear that $N_{G-v}\left(  S-v\right)  =N_{G}\left(  S-v\right)  $.
Taking in account Corollary \ref{cor118}, we obtain%
\begin{gather*}
d_{G-v}(S-v)=\left\vert S-v\right\vert -\left\vert N_{G-v}\left(  S-v\right)
\right\vert =\left\vert S\right\vert -1-\left\vert N_{G}\left(  S-v\right)
\right\vert =\\
\left\vert S\right\vert -\left\vert N_{G}\left(  S\right)  \right\vert
-1+\left\vert N_{G}\left(  S\right)  \right\vert -\left\vert N_{G}\left(
S-v\right)  \right\vert =\\
d_{G}(S)-1+\left\vert N_{G}\left(  S\right)  \right\vert -\left\vert
N_{G}\left(  S-v\right)  \right\vert =d(G-v).
\end{gather*}

In conclusion, $S-v$ is critical in $G-v$. By Theorem \ref{th715}\emph{(v)},
$G-v$ is a K\"{o}nig-Egerv\'{a}ry graph.

\emph{(iv)} It directly follows from Parts \emph{(i),(ii),(iii)}.
\end{proof}

Theorem \ref{th9} directly implies that $\varrho_{v}\left(  G\right)  >0$ for
every K\"{o}nig-Egerv\'{a}ry graph $G$ of order at least $1$. In other words,
we arrive at the following.

\begin{corollary}
If $G$ is a K\"{o}nig-Egerv\'{a}ry graph with $n(G)\geq1$, then there exists a
vertex $v$ such that $G-v$ is K\"{o}nig-Egerv\'{a}ry as well.
\end{corollary}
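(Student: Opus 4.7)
The plan is to derive the corollary directly from the formula in Theorem \ref{th9}(iv), namely $\varrho_{v}(G) = n(G) - \xi(G) + \varepsilon(G)$, by showing this quantity is strictly positive whenever $n(G) \geq 1$. Since $\varepsilon(G) \geq 0$ and $\xi(G) \leq n(G)$, there is really only one subtle case to rule out, namely when $\xi(G) = n(G)$ and $\varepsilon(G) = 0$. The task reduces to exhibiting, in every scenario, at least one vertex $v$ whose removal preserves the K\"{o}nig-Egerv\'{a}ry property, equivalently to showing this degenerate situation cannot occur.

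I would split the argument into two cases. First, if $\mathrm{core}(G) \subsetneq V(G)$, then $n(G) - \xi(G) \geq 1$, so we may pick any $v \in V(G) - \mathrm{core}(G)$ and invoke Theorem \ref{th9}(i) to conclude that $G - v$ is K\"{o}nig-Egerv\'{a}ry. Second, if $\mathrm{core}(G) = V(G)$, then $V(G)$ itself is an independent set, which forces $E(G) = \emptyset$. For the edgeless graph on $n(G)$ vertices, $N(A) = \emptyset$ for every $A \subseteq V(G)$, so the unique subset achieving $d(G) = n(G)$ is $V(G)$ itself. Hence $\ker(G) = V(G)$ and $\varepsilon(G) = n(G) = \xi(G)$, giving $\varrho_{v}(G) = n(G) \geq 1$; alternatively one invokes Theorem \ref{th9}(iii) on any $v \in \ker(G) = V(G)$.

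The only (very mild) obstacle is the second case: one must notice that $\mathrm{core}(G) = V(G)$ is not vacuous but forces $G$ to be the empty graph on $n(G)$ vertices, and then check that $\varepsilon(G)$ compensates exactly for the vanishing of $n(G) - \xi(G)$. Together the two cases show $\varrho_{v}(G) \geq 1$, which is precisely the claim that some vertex deletion preserves the K\"{o}nig-Egerv\'{a}ry property.
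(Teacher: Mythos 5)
Your proposal is correct and follows the paper's route exactly: the paper simply states that Theorem \ref{th9} directly implies $\varrho_{v}(G)>0$ for $n(G)\geq 1$, which is the content of your argument. The only difference is that you explicitly verify the one potentially degenerate case ($\mathrm{core}(G)=V(G)$, forcing $G$ edgeless and $\ker(G)=V(G)$), a detail the paper leaves implicit; your verification of it is accurate.
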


Clearly, for bipartite graphs $\varrho_{v}\left(  G\right)  =n(G)$. Notice
that the equality from Theorem \ref{th9}\emph{(iv)} holds also for some
non-K\"{o}nig-Egerv\'{a}ry graphs; for instance, $\varrho_{v}\left(
C_{2k+1}\right)  =2k+1=n\left(  C_{2k+1}\right)  -\xi\left(  C_{2k+1}\right)
+\varepsilon\left(  C_{2k+1}\right)  $. However, there exist
non-K\"{o}nig-Egerv\'{a}ry graphs for which the equality from Theorem
\ref{th9}\emph{(iv)} is not true. For instance, consider the graphs from
Figure \ref{fig9}: $\mathrm{core}(G_{1})=\ker\left(  G_{1}\right)  =\emptyset$
and $\varrho_{v}\left(  G_{1}\right)  =4$; $\mathrm{core}(G_{2})=\ker\left(
G_{2}\right)  =\left\{  u_{1},u_{2}\right\}  $ and $\varrho_{v}\left(
G_{2}\right)  =3$, while $\varrho\left(  G_{2}\right)  =0$.

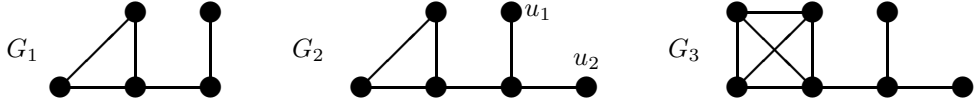
\begin{figure}[h]
\setlength{\unitlength}{1cm}\begin{picture}(5,1.2)\thicklines
\multiput(1,0)(1,0){3}{\circle*{0.29}}
\multiput(2,1)(1,0){2}{\circle*{0.29}}
\put(1,0){\line(1,0){2}}
\put(1,0){\line(1,1){1}}
\put(2,0){\line(0,1){1}}
\put(3,0){\line(0,1){1}}
\put(0.5,0.5){\makebox(0,0){$G_{1}$}}
\multiput(5,0)(1,0){4}{\circle*{0.29}}
\multiput(6,1)(1,0){2}{\circle*{0.29}}
\put(5,0){\line(1,0){3}}
\put(5,0){\line(1,1){1}}
\put(6,0){\line(0,1){1}}
\put(7,0){\line(0,1){1}}
\put(7.35,1){\makebox(0,0){$u_{1}$}}
\put(8,0.35){\makebox(0,0){$u_{2}$}}
\put(4.3,0.5){\makebox(0,0){$G_{2}$}}
\multiput(10,0)(1,0){4}{\circle*{0.29}}
\multiput(10,1)(1,0){3}{\circle*{0.29}}
\put(10,0){\line(1,0){3}}
\put(10,0){\line(1,1){1}}
\put(10,1){\line(1,0){1}}
\put(10,1){\line(1,-1){1}}
\put(11,0){\line(0,1){1}}
\put(12,0){\line(0,1){1}}
\put(10,0){\line(0,1){1}}
\put(9.3,0.5){\makebox(0,0){$G_{3}$}}
\end{picture}\caption{$G_{1}$, $G_{2}$ and $G_{3}$ are
non-K\"{o}nig-Egerv\'{a}ry graphs}%
\label{fig9}%
\end{figure}

\begin{proposition}
\cite{LevMan2013b}\label{prop10} Let $G$ be a K\"{o}nig-Egerv\'{a}ry graph and
$v\in V(G)$ be such that $G-v$ is still a K\"{o}nig-Egerv\'{a}ry graph. Then
$v\in\mathrm{core}(G)$ if and only if $v$ is not $\mu$-critical.
\end{proposition}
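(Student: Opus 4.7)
The plan is to exploit the König-Egerváry defining equation $\alpha(H)+\mu(H)=n(H)$ for both $G$ and $G-v$, and nothing else. Writing these two equations and subtracting yields
\[
\bigl[\alpha(G)-\alpha(G-v)\bigr] + \bigl[\mu(G)-\mu(G-v)\bigr] = 1.
\]
Since deleting a single vertex can destroy at most one edge of any matching we have $\mu(G)-\mu(G-v) \in \{0,1\}$, and since removing at most one element from a maximum independent set still leaves an independent set in $G-v$ we have $\alpha(G)-\alpha(G-v) \in \{0,1\}$. Combined with the sum being $1$, exactly one of the two differences equals $1$ and the other equals $0$.

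Now I would interpret each side. By the characterization recalled just after the definition of $\mathrm{core}(G)$ in the introduction, $v \in \mathrm{core}(G)$ is equivalent to $v$ being $\alpha$-critical, i.e., $\alpha(G-v)=\alpha(G)-1$. By the definition of $\mu$-critical, $v$ is not $\mu$-critical iff $\mu(G-v)=\mu(G)$. The dichotomy above says these two conditions are simultaneously true or simultaneously false, which is exactly the claimed equivalence.

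To write it up cleanly I would do both directions in one line each. For $(\Rightarrow)$: assuming $v\in\mathrm{core}(G)$, substitute $\alpha(G-v)=\alpha(G)-1$ into $\alpha(G-v)+\mu(G-v)=n(G)-1$ and use $\alpha(G)+\mu(G)=n(G)$ to conclude $\mu(G-v)=\mu(G)$. For $(\Leftarrow)$: assuming $\mu(G-v)=\mu(G)$, the same two equalities force $\alpha(G-v)=\alpha(G)-1$, so $v$ is $\alpha$-critical and hence lies in $\mathrm{core}(G)$.

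There is no real obstacle here: the hypothesis that \emph{both} $G$ and $G-v$ are König-Egerváry is what makes the numerical argument collapse to a single subtraction, so no appeal to matching structure (e.g., Theorem \ref{th2}(i), which was needed for the earlier, similar proposition without the $G-v$ König-Egerváry hypothesis) is required. The only thing worth pausing over is the trivial bounds $\alpha(G)-\alpha(G-v),\mu(G)-\mu(G-v)\in\{0,1\}$, which I would just state.
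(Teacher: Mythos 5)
Your proof is correct. Note first that the paper states this result as a citation but in fact proves an equivalent statement just before it (the unnumbered proposition opening Section 3), since by the paper's own definition ``$v$ is not $\mu$-critical'' means exactly ``some maximum matching does not saturate $v$.'' For the direction $v\in\mathrm{core}(G)\Rightarrow\mu(G-v)=\mu(G)$ your argument coincides with the paper's: substitute $\alpha(G-v)=\alpha(G)-1$ into the two K\"{o}nig-Egerv\'{a}ry identities. For the converse, however, you take a genuinely different and more elementary route. The paper argues structurally: by Theorem \ref{th2}\emph{(i)} every maximum matching saturates $N(\mathrm{core}(G))$, and by Theorem \ref{th2}\emph{(ii)} the subgraph $G-N[\mathrm{core}(G)]$ carries a perfect matching inside every maximum matching of $G$, so every vertex outside $\mathrm{core}(G)$ is $\mu$-critical --- an implication that holds for \emph{any} K\"{o}nig-Egerv\'{a}ry graph, with no hypothesis on $G-v$. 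You instead close the loop by pure counting: the subtraction $\bigl[\alpha(G)-\alpha(G-v)\bigr]+\bigl[\mu(G)-\mu(G-v)\bigr]=1$ together with the trivial bounds that each difference lies in $\{0,1\}$ forces the exclusive dichotomy, so both directions follow at once. Your version is shorter and self-contained, and it isolates exactly where the hypothesis that $G-v$ is K\"{o}nig-Egerv\'{a}ry enters; the paper's version buys a stronger one-sided statement (non-$\mu$-critical implies membership in $\mathrm{core}(G)$ in every K\"{o}nig-Egerv\'{a}ry graph, whether or not $G-v$ is one). One small inaccuracy in your commentary: the earlier, similar proposition in the paper \emph{does} carry the hypothesis that $G-v$ is K\"{o}nig-Egerv\'{a}ry; it is only its converse half whose proof does not use it. This does not affect the validity of your argument.
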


Theorem \ref{th9} and Proposition \ref{prop10} imply the following.

\begin{corollary}
\label{cor99}Let $G$ be a K\"{o}nig-Egerv\'{a}ry graph and $v\in V(G)$. Then
$v\in\ker(G)$ if and only if $v\in\mathrm{core}(G)$ and $v$ is not $\mu$-critical.
\end{corollary}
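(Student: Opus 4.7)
The plan is to combine Theorem \ref{th9} with Proposition \ref{prop10}, handling the two directions separately. Both rely on first establishing whether $G-v$ is K\"{o}nig-Egerv\'{a}ry, since Proposition \ref{prop10} only applies in that case.

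For the forward direction, I would start from $v\in\ker(G)$. By Theorem \ref{th4}\emph{(i)}, $\ker(G)\subseteq\mathrm{core}(G)$, hence $v\in\mathrm{core}(G)$. Moreover, Theorem \ref{th9}\emph{(iii)} guarantees that $G-v$ is still a K\"{o}nig-Egerv\'{a}ry graph, which places us in the hypothesis of Proposition \ref{prop10}. Applying Proposition \ref{prop10} then yields that $v$ is not $\mu$-critical, finishing this direction.

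For the converse, I would argue by contrapositive: assume $v\in\mathrm{core}(G)$ but $v\notin\ker(G)$, i.e., $v\in\mathrm{core}(G)-\ker(G)$. By Theorem \ref{th9}\emph{(ii)}, $G-v$ is not K\"{o}nig-Egerv\'{a}ry; in fact, it is $1$-K\"{o}nig-Egerv\'{a}ry, and the proof of that part already establishes $\mu(G-v)=\mu(G)-1$. Therefore $v$ is $\mu$-critical, contradicting the assumption. Hence every $v\in\mathrm{core}(G)$ that is not $\mu$-critical must belong to $\ker(G)$.

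No genuine obstacle is expected here: the whole content has been packaged into Theorem \ref{th9} and Proposition \ref{prop10}, and the corollary is essentially a repackaging of the three cases in Theorem \ref{th9} through the bridge provided by Proposition \ref{prop10}. The only subtle point to keep in mind is that Proposition \ref{prop10} requires $G-v$ to be K\"{o}nig-Egerv\'{a}ry, which is precisely why one must invoke Theorem \ref{th9}\emph{(iii)} in the forward direction before applying it.
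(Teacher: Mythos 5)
Your proof is correct and follows exactly the route the paper intends: the paper gives no separate argument, stating only that the corollary follows from Theorem \ref{th9} and Proposition \ref{prop10}, and your write-up is a faithful unpacking of that implication (using Theorem \ref{th4}\emph{(i)} and Theorem \ref{th9}\emph{(iii)} to enter Proposition \ref{prop10} in one direction, and Theorem \ref{th9}\emph{(ii)} to exclude $v\in\mathrm{core}(G)-\ker(G)$ in the other). The only cosmetic remark is that $\mu(G-v)=\mu(G)-1$ need not be pulled from the \emph{proof} of Theorem \ref{th9}\emph{(ii)}: it already follows from its statement that $G-v$ is $1$-K\"{o}nig-Egerv\'{a}ry together with $\alpha(G-v)=\alpha(G)-1$.
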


\begin{proposition}
\label{prop3}\cite{LevMan2006} If $G$ is a K\"{o}nig-Egerv\'{a}ry graph, then
$\eta\left(  G\right)  \leq\alpha\left(  G\right)  -\xi\left(  G\right)  $.
\end{proposition}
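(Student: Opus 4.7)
The plan is to reduce the problem to the subgraph $H := G - N[\mathrm{core}(G)]$, which by Theorem \ref{th2}(ii) is a K\"onig-Egerv\'ary graph with a perfect matching $M^*$, and then to apply a short pigeonhole argument in $H$. First I would show that every $\alpha$-critical edge $e = uv$ of $G$ satisfies $u, v \notin N[\mathrm{core}(G)]$, so lies in $E(H)$. If $e$ is $\alpha$-critical, then there exists an independent set $T$ of $G - e$ with $|T| = \alpha(G) + 1$ and $u, v \in T$, and both $T \setminus \{u\}$ and $T \setminus \{v\}$ are maximum independent sets of $G$. Hence $u, v \notin \mathrm{core}(G)$; and if, say, $v$ had a neighbor $w \in \mathrm{core}(G)$, then $T \setminus \{u\}$ would contain the adjacent pair $v, w$, a contradiction.

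Next I would use Theorem \ref{th2}(i),(iii) to observe that $|N(\mathrm{core}(G))| \leq \xi(G)$, so $\mathrm{core}(G)$ is a maximum independent set of $G[N[\mathrm{core}(G)]]$. Since there are no edges between $\mathrm{core}(G)$ and $V(H)$, this yields $\alpha(G) = \xi(G) + \alpha(H)$ and, for every $e \in E(H)$, $\alpha(G - e) = \xi(G) + \alpha(H - e)$. Consequently, an edge of $E(H)$ is $\alpha$-critical in $G$ if and only if it is $\alpha$-critical in $H$, so $\eta(G) = \eta(H)$, and the task reduces to showing $\eta(H) \leq \mu(H) = \alpha(H)$.

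The heart of the argument is the claim that every $\alpha$-critical edge of $H$ belongs to $M^*$. Given such an edge $e = xy$, pick an independent set $T$ of $H - e$ of size $\alpha(H) + 1 = \mu(H) + 1$ containing $x, y$. Classify the edges of $M^*$ by how many endpoints they have in $T$: let $k$, $\ell$, $m$ denote the number with $2$, $1$, $0$ endpoints, respectively. Since $M^*$ is perfect, $2k + \ell = |T| = \mu(H) + 1$, while $k + \ell + m = \mu(H)$, forcing $k = m + 1 \geq 1$. Any edge of $M^*$ with both endpoints in $T$ must equal $e$ itself (as $T$ is independent in $H - e$), so $e \in M^*$. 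Therefore $\eta(G) = \eta(H) \leq |M^*| = \mu(H) = \alpha(H) = \alpha(G) - \xi(G)$, as desired.

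I expect the main obstacle to be the localization step, i.e., showing that $\alpha$-critical edges avoid all of $N[\mathrm{core}(G)]$ rather than just $\mathrm{core}(G)$ itself; this needs a careful extraction of two different maximum independent sets from the independent set $T$ witnessing $\alpha$-criticality, combined with the defining property of the core. Once the reduction to $H$ is in place, the pigeonhole count against the perfect matching $M^*$ is essentially forced by the sizes involved.
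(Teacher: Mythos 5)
The paper does not prove Proposition \ref{prop3} at all; it imports the statement from \cite{LevMan2006}. Your argument is therefore a self-contained derivation, and after checking it step by step I find it correct, and it uses only tools already present in this paper. The localization step works: for an $\alpha$-critical edge $uv$, both $T\setminus\{u\}$ and $T\setminus\{v\}$ are maximum independent sets of $G$, which rules out $u,v\in\mathrm{core}(G)$ and, via the membership of $\mathrm{core}(G)$ in every maximum independent set, also rules out $u,v\in N(\mathrm{core}(G))$. The additivity $\alpha(G-e)=\xi(G)+\alpha(H-e)$ for $e\in E(H)$ holds because there are no edges between $\mathrm{core}(G)$ and $V(H)$ and because $\alpha(G[N[\mathrm{core}(G)]])=\xi(G)$; for that last equality your justification via Theorem \ref{th2}\emph{(i)} alone is a bit terse --- the clean reference is Theorem \ref{Theorem1}\emph{(i)},\emph{(iii)} applied to the critical set $A=\mathrm{core}(G)$ (critical by Theorem \ref{th4}\emph{(ii)} and Theorem \ref{th715}\emph{(iv)}), which gives both $\alpha(G[N[A]])=|A|$ and the additive decomposition directly. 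The pigeonhole count showing that every $\alpha$-critical edge of $H$ lies in the perfect matching $M^{*}$ is correct ($2k+\ell=\mu(H)+1$ and $k+\ell+m=\mu(H)$ force $k\geq1$, and the unique $M^{*}$-edge inside $T$ can only be $e$), and it is in the same spirit as the machinery the present paper develops for $\mu$-critical edges (Proposition \ref{prop18} and Theorem \ref{prop1}); in effect you are showing that $\alpha$-critical edges of a K\"{o}nig-Egerv\'{a}ry graph are $\mu$-critical edges of $G-N[\mathrm{core}(G)]$, which is close to how \cite{LevMan2006} itself proceeds. In short: correct, complete up to one citation that should be redirected, and compatible with the paper's framework.
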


\begin{proposition}
\label{prop4}\cite{BGL2002} If $G$ is a graph without isolated vertices, then
$\alpha\left(  G\right)  -\mu\left(  G\right)  \leq\xi\left(  G\right)  $.
\end{proposition}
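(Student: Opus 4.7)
The plan is to prove the chain
\[
\alpha(G) - \mu(G) \;\leq\; d(G) \;\leq\; \varepsilon(G) \;\leq\; \xi(G),
\]
each link of which follows from a result already established in the excerpt. The leftmost inequality is precisely Lorentzen's inequality (Corollary \ref{cor1}), which holds for every graph and was proved earlier via Theorem \ref{Theorem1}. So the first step is simply to invoke it.

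For the middle inequality, I would use the structural facts about $\ker(G)$ recorded in Theorem \ref{th4}. Parts \emph{(ii)} and \emph{(iii)} together imply that $\ker(G)$ is itself a critical independent set: being the intersection of all critical independent sets, it is critical by the intersection-closure in \emph{(ii)}, and \emph{(iii)} states explicitly that $\ker(G)$ is the unique minimal independent critical set. Consequently, by the definition of the critical difference,
\[
d(G) \;=\; |\ker(G)| - |N(\ker(G))| \;\leq\; |\ker(G)| \;=\; \varepsilon(G),
\]
since $|N(\ker(G))|\geq 0$. This inequality $d(G)\leq \varepsilon(G)$ is the heart of the argument; the main ``obstacle'' is really just the observation that $\ker(G)$ itself attains the critical difference, rather than merely being contained in some set that does.

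The rightmost inequality is Theorem \ref{th4}\emph{(i)}, $\ker(G) \subseteq \mathrm{core}(G)$, which immediately yields $\varepsilon(G) = |\ker(G)| \leq |\mathrm{core}(G)| = \xi(G)$. Concatenating the three inequalities gives $\alpha(G) - \mu(G) \leq \xi(G)$, as required.

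One side remark: as sketched, the argument never uses the hypothesis that $G$ has no isolated vertices; the chain is valid for every graph. The hypothesis is presumably an artifact of the original proof in \cite{BGL2002} and can be dropped once the ingredients $\ker(G) \subseteq \mathrm{core}(G)$ and the criticality of $\ker(G)$ are in hand.
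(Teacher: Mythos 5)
Your chain $\alpha(G)-\mu(G)\leq d(G)\leq\varepsilon(G)\leq\xi(G)$ is correct, and each link is supported by a result actually available in the paper: the first is Corollary \ref{cor1}; the second follows because $\ker(G)$ is itself a critical independent set (Theorem \ref{th4}\emph{(iii)}; the finite intersection-closure from \emph{(ii)} also gives this), so $d(G)=\left\vert \ker(G)\right\vert -\left\vert N(\ker(G))\right\vert \leq\left\vert \ker(G)\right\vert =\varepsilon(G)$; and the third is Theorem \ref{th4}\emph{(i)}. Note that the paper offers no proof of Proposition \ref{prop4} at all --- it is quoted from \cite{BGL2002} --- so your argument is necessarily a different route: a self-contained derivation inside the critical-set machinery the paper develops, which moreover yields the strictly sharper intermediate bound $\alpha(G)-\mu(G)\leq\varepsilon(G)$ rather than only $\alpha(G)-\mu(G)\leq\xi(G)$. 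Your side remark is also sound: an isolated vertex lies in every critical independent set (otherwise adjoining it would strictly increase the difference), hence in $\ker(G)$, and none of the three links invokes the no-isolated-vertices hypothesis, so that hypothesis is an artifact of the source rather than a necessity for this inequality. The one point to keep explicit is that the equality $d(G)=\left\vert \ker(G)\right\vert -\left\vert N(\ker(G))\right\vert$ rests precisely on $\ker(G)$ attaining the critical difference, which you correctly attribute to Theorem \ref{th4}\emph{(iii)}; with that said, the proof is complete.
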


\begin{corollary}
\label{cor11}If $G$ is a K\"{o}nig-Egerv\'{a}ry graph, then%
\[
\eta\left(  G\right)  +\mu\left(  G\right)  +\varepsilon\left(  G\right)
\leq\varrho_{v}\left(  G\right)  \leq2\mu\left(  G\right)  +\varepsilon\left(
G\right)  ,
\]
and the above inequalities turn out to be equalities if and only if
$\eta\left(  G\right)  =\mu\left(  G\right)  $.
\end{corollary}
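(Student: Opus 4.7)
The plan is to take the vertex heredity equality $\varrho_{v}(G)=n(G)-\xi(G)+\varepsilon(G)$ from Theorem \ref{th9}\emph{(iv)} and combine it with the K\"onig-Egerv\'ary identity $n(G)=\alpha(G)+\mu(G)$ to rewrite
\[
\varrho_{v}(G)=\alpha(G)+\mu(G)-\xi(G)+\varepsilon(G).
\]
Once this is in hand, each of the three assertions in the corollary reduces to bounding the single quantity $\alpha(G)-\xi(G)$ between $\eta(G)$ below and $\mu(G)$ above.

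For the lower bound, I would invoke Proposition \ref{prop3}, which gives $\eta(G)\leq\alpha(G)-\xi(G)$ for every K\"onig-Egerv\'ary graph $G$. Substituting this into the rewritten formula yields $\varrho_{v}(G)\geq\eta(G)+\mu(G)+\varepsilon(G)$. For the upper bound, I need $\alpha(G)-\xi(G)\leq\mu(G)$, i.e.\ $\alpha(G)-\mu(G)\leq\xi(G)$. Proposition \ref{prop4} delivers this whenever $G$ has no isolated vertices; to avoid this hypothesis I would prefer Theorem \ref{th2}\emph{(iii)}, which asserts $\xi(G)-|N(\mathrm{core}(G))|=\alpha(G)-\mu(G)$ for every K\"onig-Egerv\'ary graph, so $\alpha(G)-\mu(G)\leq\xi(G)$ unconditionally. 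Plugging this back in gives $\varrho_{v}(G)\leq 2\mu(G)+\varepsilon(G)$.

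Finally, for the equality characterization I would observe that the two displayed inequalities amount precisely to the sandwich
\[
\eta(G)\;\leq\;\alpha(G)-\xi(G)\;\leq\;\mu(G).
\]
Hence both inequalities become equalities simultaneously iff $\eta(G)=\alpha(G)-\xi(G)=\mu(G)$, and this in turn is equivalent to the single condition $\eta(G)=\mu(G)$: the forward direction is trivial, and the backward direction follows because $\eta(G)=\mu(G)$ forces the middle term $\alpha(G)-\xi(G)$ to equal both outer terms, thanks to the sandwich.

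I do not expect a serious obstacle here; the argument is essentially an algebraic rearrangement of Theorem \ref{th9}\emph{(iv)} together with the cited bounds on $\alpha(G)-\xi(G)$. The only delicate point is the isolated-vertex caveat in Proposition \ref{prop4}, which I would sidestep by appealing to Theorem \ref{th2}\emph{(iii)} instead so that the upper bound applies to arbitrary K\"onig-Egerv\'ary graphs.
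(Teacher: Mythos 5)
Your proposal is correct and follows essentially the same route as the paper: rewrite $\varrho_{v}(G)=\alpha(G)+\mu(G)-\xi(G)+\varepsilon(G)$ via Theorem \ref{th9}\emph{(iv)} and the K\"onig-Egerv\'ary identity, then sandwich $\alpha(G)-\xi(G)$ between $\eta(G)$ (Proposition \ref{prop3}) and $\mu(G)$. Your substitution of Theorem \ref{th2}\emph{(iii)} for Proposition \ref{prop4} to obtain $\alpha(G)-\mu(G)\leq\xi(G)$ without the isolated-vertex hypothesis is a minor but legitimate tightening of the paper's argument, which invokes Proposition \ref{prop4} without addressing that caveat; your explicit treatment of the equality case is also slightly more complete than the paper's.
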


\begin{proof}
By Proposition \ref{prop3}, we get
\[
\eta\left(  G\right)  +\mu\left(  G\right)  +\varepsilon\left(  G\right)
\leq\alpha\left(  G\right)  +\mu\left(  G\right)  -\xi\left(  G\right)
+\varepsilon\left(  G\right)  =n\left(  G\right)  -\xi\left(  G\right)
+\varepsilon\left(  G\right)  =\varrho_{v}\left(  G\right)  .
\]
Further, using Proposition \ref{prop4} and the fact that $G$ is a
K\"{o}nig-Egerv\'{a}ry graph, we obtain%
\begin{gather*}
\varrho_{v}\left(  G\right)  =n\left(  G\right)  -\xi\left(  G\right)
+\varepsilon\left(  G\right)  =\alpha\left(  G\right)  +\mu\left(  G\right)
-\xi\left(  G\right)  +\varepsilon\left(  G\right)  \leq\\
\alpha\left(  G\right)  +\mu\left(  G\right)  -\left(  \alpha\left(  G\right)
-\mu\left(  G\right)  \right)  +\varepsilon\left(  G\right)  =2\mu\left(
G\right)  +\varepsilon\left(  G\right)  ,
\end{gather*}
and this completes the proof.
\end{proof}

Notice that: $\eta\left(  K_{4}-e\right)  +\mu\left(  K_{4}-e\right)
+\varepsilon\left(  K_{4}-e\right)  =\varrho_{v}\left(  K_{4}-e\right)
$\ \ and $\varrho_{v}\left(  C_{4}\right)  =2\mu\left(  C_{4}\right)
+\varepsilon\left(  C_{4}\right)  $. In other words, the bounds in Corollary
\ref{cor11} are tight.

\section{The edge heredity inequality}

\begin{lemma}
\label{lem5}Suppose that $B\subseteq A$ are critical independent sets in a
graph $G$. Then there is a perfect matching between $A-B$ and $N(A)-N(B)$,
and, consequently, $G\left[  N\left[  A-B\right]  \right]  $ is a
K\"{o}nig-Egerv\'{a}ry graph.
\end{lemma}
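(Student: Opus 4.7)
The plan is to split the argument into two parts: first produce the perfect matching via Hall's theorem, then deduce the K\"{o}nig-Egerv\'{a}ry conclusion by applying Theorem~\ref{Theorem1} inside the K\"{o}nig-Egerv\'{a}ry graph $G[N[A]]$.

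For the matching, I would check Hall's condition on the bipartite subgraph between $A-B$ and $N(A)-N(B)$. Given any $C \subseteq A-B$, the set $B \cup C$ is independent because it sits inside the independent set $A$. Criticality of $B$ then yields
\[
|B| + |C| - |N(B \cup C)| = |B \cup C| - |N(B \cup C)| \leq d(G) = |B| - |N(B)|,
\]
which simplifies to $|C| \leq |N(B \cup C)| - |N(B)| = |N(C) \setminus N(B)|$ using $N(B \cup C) = N(B) \cup N(C)$. Since $C \subseteq A$ forces $N(C) \subseteq N(A)$, the right-hand side is $|N(C) \cap (N(A)-N(B))|$, which is exactly the Hall condition. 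So a matching saturating $A-B$ exists in the bipartite subgraph on $(A-B) \cup (N(A)-N(B))$. Simultaneous criticality of $A$ and $B$, combined with the inclusions $B \subseteq A$ and $N(B) \subseteq N(A)$, yields
\[
|A-B| = |A|-|B| = |N(A)|-|N(B)| = |N(A)-N(B)|,
\]
so both sides of the matching have the same cardinality and the matching is perfect.

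For the K\"{o}nig-Egerv\'{a}ry conclusion, I would pass to the auxiliary graph $H := G[N[A]]$, which is K\"{o}nig-Egerv\'{a}ry by Theorem~\ref{Theorem1}\emph{(i)}, with $\alpha(H)=|A|$, $\mu(H)=|N(A)|$, and $d(H)=d(G)$ by Theorem~\ref{th2}\emph{(iii)}. Since $N_G(B) \subseteq V(H)$, the set $B$ is critical in $H$ as well. Applying parts \emph{(iii)} and \emph{(iv)} of Theorem~\ref{Theorem1} to $B$ inside $H$ gives $\alpha(H - N_H[B]) = |A|-|B| = |A-B|$ and $\mu(H - N_H[B]) = |N(A)|-|N(B)| = |A-B|$, so on its $2|A-B|$ vertices $H - N_H[B] = G[(A-B) \cup (N(A)-N(B))]$ is K\"{o}nig-Egerv\'{a}ry, with the perfect matching from the first stage realizing its maximum matching.

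The main obstacle is bridging the gap between $G[(A-B) \cup (N(A)-N(B))]$ and $G[N[A-B]]$, which amounts to showing $N(A-B) \subseteq N(A)-N(B)$; the reverse inclusion is immediate from the perfect matching, so what is needed is that no vertex in $N(B)$ can be a neighbor of $A-B$. I would attack this by assuming a putative $v \in N(A-B) \cap N(B)$ with witnesses $a \in A-B$ and $b \in B$, and then trying to derive a contradiction by constructing an independent set whose deficiency exceeds $d(G)$, likely via a swap argument of the form $A \mapsto (A \setminus S) \cup \{v\}$ for an appropriate $S$ containing $v$'s neighbors in $A$; this criticality-based ruling out of "bridge vertices" is the delicate step that the whole lemma hinges on.
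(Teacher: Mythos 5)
Your first half is correct and complete: checking Hall's condition for $C\subseteq A-B$ by applying the criticality of $B$ to the independent set $B\cup C$, and then using $\left\vert A-B\right\vert =\left\vert A\right\vert -\left\vert B\right\vert =\left\vert N(A)\right\vert -\left\vert N(B)\right\vert =\left\vert N(A)-N(B)\right\vert$ to make the matching perfect, is sound and self-contained. It is a genuinely different route from the paper's, which instead takes a maximum matching of the K\"{o}nig-Egerv\'{a}ry graph $G\left[N\left[A\right]\right]$ (such a matching matches $N(A)$ into $A$), observes that no edge joins $B$ to $N(A)-N(B)$, and restricts that matching to $N(A)-N(B)$. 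Your version avoids invoking Theorem \ref{Theorem1} for this part; the paper's version has the advantage of producing the perfect matching as a restriction of an arbitrary maximum matching, which is the form actually used later in Theorem \ref{cor9}.

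The second half is where the genuine gap lies, and you located it precisely --- but the ``delicate step'' you postpone cannot be carried out, because the inclusion $N(A-B)\subseteq N(A)-N(B)$ is false in general, and with it the lemma's second conclusion. Take $V(G)=\{b_{1},b_{2},v,a,w\}$ with $v$ adjacent to all other vertices and one further edge $aw$. Then $d(G)=1$, and $B=\{b_{1},b_{2}\}$ and $A=\{b_{1},b_{2},a\}$ are critical independent sets with $B\subseteq A$; the perfect matching between $A-B=\{a\}$ and $N(A)-N(B)=\{w\}$ is the edge $aw$, yet $G\left[N\left[A-B\right]\right]=G[\{a,v,w\}]$ is a triangle, hence not K\"{o}nig-Egerv\'{a}ry. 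The bridge vertex $v\in N(B)\cap N(A-B)$ is exactly what your swap argument would have to exclude, and it cannot be excluded; note that this $G$ is itself K\"{o}nig-Egerv\'{a}ry, so restricting to that class does not help. What your argument \emph{does} establish --- that $G\left[(A-B)\cup\left(N(A)-N(B)\right)\right]$ is K\"{o}nig-Egerv\'{a}ry, since it has a perfect matching and the independent set $A-B$ covers half of its vertices --- is the correct salvage of the statement, and it is all that the subsequent results rely on. For what it is worth, the paper's own proof has the same defect: its last line tacitly identifies the vertex set of $G\left[N\left[A-B\right]\right]$ with $(A-B)\cup\left(N(A)-N(B)\right)$ when it deduces the K\"{o}nig-Egerv\'{a}ry property from $\alpha=\mu=\left\vert A-B\right\vert$.
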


\begin{proof}
Since $A$ and $B$ are critical in $G$, the graphs $H_{1}=G\left[  N\left[
A\right]  \right]  $ and $H_{2}=G\left[  N\left[  B\right]  \right]  $ are
K\"{o}nig-Egerv\'{a}ry, in accordance with Theorem \ref{Theorem1}\emph{(i)}.
Thus, we get
\[
\left\vert A\right\vert -\left\vert N(A)\right\vert =\left\vert B\right\vert
-\left\vert N(B)\right\vert \Leftrightarrow\left\vert A\right\vert -\left\vert
B\right\vert =\left\vert N(A)\right\vert -\left\vert N(B)\right\vert .
\]

By Theorem \ref{th3}\emph{(iii)}, every maximum matching of $H_{1}$ matches
$N\left(  A\right)  $ into $A$, and every maximum matching of $H_{2}$ matches
$N\left(  B\right)  $ into $B$. On the other hand, there is no edge $xy\in
E\left(  G\right)  $, such that $x\in B$ and $y\in N\left(  A\right)
-N\left(  B\right)  $. \ Therefore, every maximum matching $M_{1}$ of $H_{1}$
matches $N\left(  A\right)  -N\left(  B\right)  $ into $A-B$. Since
$\left\vert A\right\vert -\left\vert B\right\vert =\left\vert N(A)\right\vert
-\left\vert N(B)\right\vert $, we infer that the restriction $M_{2}$ of
$M_{1}$ to $N\left(  A\right)  -N\left(  B\right)  $\ defines a perfect
matching between $N\left(  A\right)  -N\left(  B\right)  $ and $A-B$.

Therefore,
\[
\mu\left(  G\left[  N\left[  A-B\right]  \right]  \right)  =\left\vert
M_{2}\right\vert =\left\vert A-B\right\vert =\alpha\left(  G\left[  N\left[
A-B\right]  \right]  \right)  ,
\]
which means, by Theorem \ref{th715}, that $G\left[  N\left[  A-B\right]
\right]  $ is a K\"{o}nig-Egerv\'{a}ry graph, because $A-B$ is independent.
\end{proof}

\begin{figure}[h]
\setlength{\unitlength}{1cm}\begin{picture}(5,1.2)\thicklines
\multiput(1,0)(1,0){4}{\circle*{0.29}}
\multiput(2,1)(1,0){2}{\circle*{0.29}}
\put(1,0){\line(1,0){3}}
\put(2,0){\line(0,1){1}}
\put(2,1){\line(1,-1){1}}
\put(2,1){\line(1,0){1}}
\put(0.5,0.5){\makebox(0,0){$G_{1}$}}
\multiput(6,0)(1,0){3}{\circle*{0.29}}
\multiput(6,1)(1,0){3}{\circle*{0.29}}
\put(6,0){\line(1,0){2}}
\put(6,0){\line(0,1){1}}
\put(7,1){\line(1,0){1}}
\put(7,1){\line(1,-1){1}}
\put(8,0){\line(0,1){1}}
\put(6.35,1){\makebox(0,0){$u_{1}$}}
\put(7,0.35){\makebox(0,0){$u_{2}$}}
\put(5.3,0.5){\makebox(0,0){$G_{2}$}}
\multiput(10,0)(1,0){4}{\circle*{0.29}}
\multiput(11,1)(1,0){3}{\circle*{0.29}}
\put(10,0){\line(1,0){3}}
\put(11,0){\line(0,1){1}}
\put(12,0){\line(1,1){1}}
\put(12,0){\line(0,1){1}}
\put(13,0){\line(0,1){1}}
\put(10,0.3){\makebox(0,0){$v_{1}$}}
\put(10.65,1){\makebox(0,0){$v_{2}$}}
\put(11.65,1){\makebox(0,0){$v_{3}$}}
\put(13.36,1){\makebox(0,0){$v_{4}$}}
\put(9.3,0.5){\makebox(0,0){$G_{3}$}}
\end{picture}\caption{$G_{1}$, $G_{2}$ and $G_{3}$ are K\"{o}nig-Egerv\'{a}ry
graphs}%
\label{fig33}%
\end{figure}
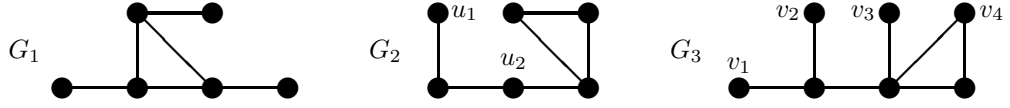

Consider the graphs from Figure \ref{fig33}: $G_{1}$ and $G_{2}$ have perfect
matchings,
\[
\ker(G_{1})=\mathrm{core}\left(  G_{1}\right)  =\emptyset\text{ and }%
\ker\left(  G_{2}\right)  =\emptyset\subset\mathrm{core}\left(  G_{2}\right)
=\left\{  u_{1},u_{2}\right\}  ,
\]
while $\ker\left(  G_{3}\right)  =\left\{  v_{1},v_{2}\right\}  \subset
\mathrm{core}\left(  G_{2}\right)  =\left\{  v_{1},v_{2},v_{3}\right\}  $\ and
$G_{3}-N\left[  \ker(G_{3})\right]  $ has a perfect matching.

\begin{theorem}
\label{cor9}Assume that $G$ is a K\"{o}nig-Egerv\'{a}ry graph and $S\in
\Omega\left(  G\right)  $. Then every maximum matching $M$ of $G$ defines a
perfect matching between:

\emph{(i) }$N\left(  S\right)  -N\left(  \mathrm{core}\left(  G\right)
\right)  =V\left(  G\right)  -S-N\left(  \mathrm{core}\left(  G\right)
\right)  $ and $S-\mathrm{core}\left(  G\right)  $;

\emph{(ii)} $N\left(  \mathrm{core}\left(  G\right)  \right)  -N\left(
\ker(G)\right)  $ and $\mathrm{core}\left(  G\right)  -\ker(G)$;

\emph{(iii)} $N\left(  S\right)  -N\left(  \ker(G)\right)  =V\left(  G\right)
-S-N\left(  \ker(G)\right)  $ and $S-\ker(G)$.
\end{theorem}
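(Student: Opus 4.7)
The plan is to exploit two basic matching properties of a K\"onig-Egerv\'ary graph: by Theorem \ref{th715}(iii), every maximum matching $M$ of $G$ pairs each vertex of $V(G)-S$ with a vertex of $S$; and by Theorem \ref{th2}(i), every maximum matching $M$ pairs each vertex of $N(\mathrm{core}(G))$ with a vertex of $\mathrm{core}(G)$. Since $S$ is a maximum independent set, $V(G)-S=N(S)$ (every vertex outside $S$ must have a neighbor in $S$, lest $S$ fail to be maximum), and we have the inclusions $\ker(G)\subseteq\mathrm{core}(G)\subseteq S$, so these two matching constraints interact cleanly. The other ingredient I will need is the critical-set identity $|A|-|N(A)|=d(G)$ for $A=\ker(G)$ and $A=\mathrm{core}(G)$, combined with $d(G)=\alpha(G)-\mu(G)$ from Theorem \ref{th2}(iii), to check that the two sides of each claimed perfect matching have the same cardinality.

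For part (i), I would take $v\in N(S)-N(\mathrm{core}(G))$. Its $M$-partner $w$ lies in $S$ by Theorem \ref{th715}(iii); and if we had $w\in\mathrm{core}(G)$, the edge $vw$ would witness $v\in N(\mathrm{core}(G))$, a contradiction. Hence $w\in S-\mathrm{core}(G)$, so $M$ restricts to an injection $N(S)-N(\mathrm{core}(G))\to S-\mathrm{core}(G)$. Both sides have cardinality $\mu(G)-\xi(G)+d(G)=\alpha(G)-\xi(G)$, where the first equality uses $|N(\mathrm{core}(G))|=\xi(G)-d(G)$ together with $|N(S)|=n(G)-\alpha(G)=\mu(G)$, and the second uses $d(G)=\alpha(G)-\mu(G)$. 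An equal-size injection is a bijection, so $M$ defines the claimed perfect matching.

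Parts (ii) and (iii) follow by precisely the same contrapositive reasoning. For (ii), take $v\in N(\mathrm{core}(G))-N(\ker(G))$: Theorem \ref{th2}(i) forces the $M$-partner $w$ into $\mathrm{core}(G)$, and $w\in\ker(G)$ would put $v$ in $N(\ker(G))$, so $w\in\mathrm{core}(G)-\ker(G)$; the size check using $|N(\ker(G))|=\varepsilon(G)-d(G)$ yields $\xi(G)-\varepsilon(G)$ on both sides. For (iii), take $v\in N(S)-N(\ker(G))$: Theorem \ref{th715}(iii) gives $w\in S$ and, again, $w\notin\ker(G)$ would put $v\in N(\ker(G))$; the two cardinalities agree as $\alpha(G)-\varepsilon(G)$. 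I expect no genuine obstacle here, and in this sense the statement is essentially bookkeeping: the contrapositive trick ``if the partner lands in $B$, then $v\in N(B)$'' plus the critical-set identity do all the work, and part (iii) is just the disjoint union of parts (i) and (ii), mirroring the filtration $\ker(G)\subseteq\mathrm{core}(G)\subseteq S$. The only place where one must pause is to notice that $\ker(G)$ is itself critical (by Theorem \ref{th4}(iii)) so that its cardinality identity is available — without this, the size computation in (ii) and (iii) would fail.
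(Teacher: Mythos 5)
Your proof is correct, and the underlying mechanism is the same one that powers the paper's argument, but you package it differently. The paper proves this theorem in one line by citing Lemma \ref{lem5}: for nested critical independent sets $B\subseteq A$ there is a perfect matching between $A-B$ and $N(A)-N(B)$, applied to the three pairs $\mathrm{core}(G)\subseteq S$, $\ker(G)\subseteq\mathrm{core}(G)$, $\ker(G)\subseteq S$. Inside Lemma \ref{lem5} one finds exactly your two ingredients — the partner of a vertex of $N(A)-N(B)$ cannot lie in $B$ (else the vertex would be in $N(B)$), and the count $|A|-|B|=|N(A)|-|N(B)|$ coming from both sets being critical — but phrased for maximum matchings of the induced subgraph $G[N[A]]$. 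You instead argue directly about an arbitrary maximum matching $M$ of the whole graph $G$, using Theorem \ref{th715}(iii) for $S$ and Theorem \ref{th2}(i) for $\mathrm{core}(G)$ to locate the partners, then the same cardinality bookkeeping. This buys you something: the theorem as stated is about every maximum matching of $G$, and the paper's route implicitly needs the additional fact that such a matching restricts to a maximum matching of $G[N[A]]$ (spelled out only for $\ker(G)$ in Corollary \ref{cor110}), whereas your version gets the ``every maximum matching of $G$'' claim for free. What the paper's route buys is generality: Lemma \ref{lem5} works for any nested pair of critical sets, not just the three in the filtration. You were also right to flag that the size computation for $\ker(G)$ needs $\ker(G)$ to be critical, which is Theorem \ref{th4}(iii); that is the one non-obvious prerequisite.
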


\begin{proof}
It is known that every maximum independent set in a K\"{o}nig-Egerv\'{a}ry
graph is critical (by Theorem \ref{th715}). Moreover, $\mathrm{core}\left(
G\right)  $ is critical, as being the intersection of a family of critical
sets (according to Theorem \ref{th4}\emph{(ii)}). Now, Lemma \ref{lem5}
applied to the following pairs of critical sets: $\mathrm{core}\left(
G\right)  \subseteq S$, $\ker(G)\subseteq\mathrm{core}\left(  G\right)  $, and
$\ker(G)\subseteq S$ completes the proof.
\end{proof}

Combining\ Theorem \ref{th2}\emph{(i),(ii)} and Theorem \ref{cor9}%
\emph{(ii),(iii)} we infer the following.

\begin{corollary}
\label{cor110}If $G$ is a K\"{o}nig-Egerv\'{a}ry graph, then every maximum
matching of the subgraph $G\left[  N\left[  \ker\left(  G\right)  \right]
\right]  $ is included in a maximum matching of $G$. Moreover, the trace of
every maximum matching in $G$ is a maximum matching in $N\left[  \ker\left(
G\right)  \right]  $.
\end{corollary}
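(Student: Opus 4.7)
The plan is to set $H = G[N[\ker(G)]]$ and exploit the partition of $V(G)$ induced by the chain $\ker(G) \subseteq \mathrm{core}(G)$. By Theorem \ref{Theorem1}(i) applied to the critical set $A = \ker(G)$, $H$ is a K\"onig-Egerv\'ary graph with $\alpha(H) = |\ker(G)|$ and $\mu(H) = |N(\ker(G))|$; because $\ker(G)$ is independent, every edge of $H$ joins $\ker(G)$ to $N(\ker(G))$, so $H$ is bipartite between these two sides.

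For the ``trace'' claim, I would take any maximum matching $M$ of $G$. Theorem \ref{th2}(i) says $M$ matches every vertex of $N(\mathrm{core}(G))$ into $\mathrm{core}(G)$, while Theorem \ref{cor9}(ii) says the restriction of $M$ gives a perfect matching between $N(\mathrm{core}(G)) - N(\ker(G))$ and $\mathrm{core}(G) - \ker(G)$. These two statements together force every vertex of the subset $N(\ker(G)) \subseteq N(\mathrm{core}(G))$ to be matched by $M$ into $\ker(G)$, since its partners in $\mathrm{core}(G)$ cannot lie in $\mathrm{core}(G) - \ker(G)$ (that set is already exhausted). The resulting $|N(\ker(G))|$ edges lie in $E(H)$ and form a matching of size $\mu(H)$, hence a maximum matching of $H$.

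For the extension claim, let $M_0$ be any maximum matching of $H$. I would assemble a maximum matching of $G$ from three pieces with pairwise-disjoint vertex supports: (a) $M_0$ itself, supported on $V(H) = \ker(G) \cup N(\ker(G))$; (b) a perfect matching $M_2$ between $N(\mathrm{core}(G)) - N(\ker(G))$ and $\mathrm{core}(G) - \ker(G)$, which exists by Lemma \ref{lem5} applied to the pair $\ker(G) \subseteq \mathrm{core}(G)$ of critical sets; and (c) a perfect matching $M_1$ of the K\"onig-Egerv\'ary graph $G - N[\mathrm{core}(G)]$, which exists by Theorem \ref{th2}(ii). Since $\mathrm{core}(G)$ is independent, both $\ker(G) \cap N(\mathrm{core}(G)) = \emptyset$ and $\mathrm{core}(G) \cap N(\ker(G)) = \emptyset$; combined with $N(\ker(G)) \subseteq N(\mathrm{core}(G))$, this shows that $V(H)$, $(\mathrm{core}(G)-\ker(G)) \cup (N(\mathrm{core}(G)) - N(\ker(G)))$, and $V(G) - N[\mathrm{core}(G)]$ partition $V(G)$. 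A size count then gives $|M_0| + |M_2| + |M_1| = |N(\mathrm{core}(G))| + \mu(G - N[\mathrm{core}(G)]) = \mu(G)$, so $M_0 \cup M_1 \cup M_2$ is a maximum matching of $G$ containing $M_0$.

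The only real subtlety, and the step I would verify most carefully, is the disjointness of the three vertex supports; it rests entirely on the independence of $\mathrm{core}(G)$ and on $\ker(G) \subseteq \mathrm{core}(G)$. Once that is in hand, both directions reduce to bookkeeping against the partition, with Theorem \ref{th2}(i),(ii) handling the ``outer'' layer and Theorem \ref{cor9}(ii) handling the intermediate layer between $\ker(G)$ and $\mathrm{core}(G)$.
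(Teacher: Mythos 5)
Your argument is correct and follows essentially the same route as the paper, whose proof simply combines Theorem \ref{th2}\emph{(i),(ii)} with Theorem \ref{cor9}\emph{(ii),(iii)}: you derive the trace claim from Theorem \ref{th2}\emph{(i)} plus Theorem \ref{cor9}\emph{(ii)}, and the extension claim from the same three-layer partition together with Lemma \ref{lem5}. The only step you leave implicit is the identity $\mu\left(  G\right)  =\left\vert N\left(  \mathrm{core}(G)\right)  \right\vert +\mu\left(  G-N\left[  \mathrm{core}(G)\right]  \right)  $, which is justified by Theorem \ref{Theorem1}\emph{(i),(iv)} applied to the critical independent set $\mathrm{core}(G)$.
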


\begin{proposition}
\label{prop18} For every graph $G$, its number of $\mu$-critical edges is less
or equal to $\mu\left(  G\right)  $.
\end{proposition}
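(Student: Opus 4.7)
The plan is to reduce the statement to the simple observation that every $\mu$-critical edge must lie in every maximum matching of $G$, after which the bound is immediate.

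First, I would fix any $\mu$-critical edge $e\in E(G)$, so that $\mu(G-e)<\mu(G)$ by definition. The key step is to argue that $e$ belongs to every maximum matching of $G$. Suppose, for contradiction, that there is a maximum matching $M$ of $G$ with $e\notin M$. Then $M$ is still a valid matching in the graph $G-e$, since removing $e$ does not affect the edges of $M$ nor their endpoints. Hence $\mu(G-e)\geq |M|=\mu(G)$, contradicting the $\mu$-criticality of $e$. Thus every $\mu$-critical edge lies in every maximum matching.

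Now fix any one maximum matching $M^{\ast}$ of $G$. The set of $\mu$-critical edges of $G$ is contained in $M^{\ast}$, by the previous step. Since $|M^{\ast}|=\mu(G)$, the number of $\mu$-critical edges is at most $\mu(G)$, which is exactly the claim.

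There is no real obstacle here; the proof is a single-line argument once one notices the characterization of $\mu$-critical edges as edges common to all maximum matchings. The only care needed is to phrase the contradiction properly: the matching $M$ (assumed not to contain $e$) survives intact in $G-e$, so the deletion cannot decrease $\mu$. This mirrors, on the matching side, the classical observation for $\alpha$-critical vertices (belonging to every maximum independent set) used elsewhere in the paper.
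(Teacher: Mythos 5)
Your proof is correct and is essentially the paper's own argument: the paper likewise fixes a maximum matching $M$ and notes that no edge outside $M$ can be $\mu$-critical, so the $\mu$-critical edges all lie in $M$ and number at most $\mu(G)$. You merely spell out the contradiction step that the paper leaves implicit.
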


\begin{proof}
Let $M$ be a maximum matching of $G$. Hence, no edge from $E\left(  G\right)
-M$ is $\mu$-critical. In other words, the only edges from $M$ may be $\mu$-critical.
\end{proof}

\begin{theorem}
\cite{LevMan2006}\label{cor999} Let $G$ be a K\"{o}nig-Egerv\'{a}ry graph. If
$e\notin(\mathrm{core}(G),N(\mathrm{core}(G))$, then $G-e$ is a
K\"{o}nig-Egerv\'{a}ry graph, as well. In particular, if $\mathrm{core}%
(G)=\emptyset$, then $G-e$ is a K\"{o}nig-Egerv\'{a}ry graph for every edge
$e$.
\end{theorem}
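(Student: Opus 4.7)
My plan is to exploit the standard bounds $\alpha(G-e) \geq \alpha(G)$, $\mu(G-e) \geq \mu(G)-1$, together with the universal inequality $\alpha(G-e)+\mu(G-e) \leq n(G-e) = n(G) = \alpha(G)+\mu(G)$. To show $G-e$ is K\"{o}nig-Egerv\'{a}ry it suffices to prove $\alpha(G-e)+\mu(G-e) \geq n(G)$. I would split into two cases. If some maximum matching of $G$ avoids $e$, then $\mu(G-e) = \mu(G)$, and combined with $\alpha(G-e) \geq \alpha(G)$ the required inequality is immediate. So the interesting case is when every maximum matching of $G$ contains $e=xy$; then $\mu(G-e) = \mu(G)-1$, and I must produce an independent set in $G-e$ of size $\alpha(G)+1$.

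By Theorem~\ref{th715}(iii), each maximum matching of $G$ matches $V(G)-S$ into $S$ for every $S \in \Omega(G)$, so $e$ crosses every maximum independent set. The hypothesis $e \notin (\mathrm{core}(G), N(\mathrm{core}(G)))$ forces $x,y \notin \mathrm{core}(G)$: indeed, if $x \in \mathrm{core}(G)$, then $y \in N(x) \subseteq N(\mathrm{core}(G))$, contradicting the hypothesis. Consequently there exist $S_1,S_2 \in \Omega(G)$ with $x \in S_2-S_1$ and $y \in S_1-S_2$. I then turn to the bipartite subgraph $B := G[S_1 \triangle S_2]$ with sides $S_1-S_2$ and $S_2-S_1$. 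Each maximum matching of $G$ restricts to a perfect matching of $B$, and $e$ lies in every such perfect matching of $B$ -- otherwise, swapping the restriction inside $B$ would produce a maximum matching of $G$ avoiding $e$, contradicting the case assumption. Hence $B-e$ has no perfect matching.

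The crux of the proof -- and the step I expect to be the main obstacle -- is converting this combinatorial obstruction inside the bipartite graph $B$ into the desired enlargement of a maximum independent set of $G$. Applying Hall's theorem to $B-e$ (WLOG on the $S_2-S_1$ side) yields a set $T \subseteq S_2-S_1$ with $|N_{B-e}(T)| < |T| \leq |N_B(T)|$; a short count then forces $|N_B(T)|=|T|$, $x \in T$, and $y$'s only $B$-neighbor in $T$ is $x$. I would then consider the exchange $S^* := (S_1 - N_B(T)) \cup T$: it is independent in $G$ (any would-be edge from $S_1-N_B(T)$ to $T$ either violates the independence of $S_2$ or places its $S_1$-endpoint inside $N_B(T)$), and $|N_B(T)|=|T|$ gives $|S^*|=\alpha(G)$. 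Finally, since $y$'s only neighbor in $S^*$ is $x$ (its $B$-neighbors in $T$ reduce to $\{x\}$ by the choice of $T$, and $y$ has no neighbors in the $S_1$-part of $S^*$), the set $S^* \cup \{y\}$ is independent in $G-e$, yielding $\alpha(G-e) \geq \alpha(G)+1$ and closing the argument.
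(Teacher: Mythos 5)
Your argument is correct. Note that the paper does not prove this statement at all -- it is quoted from \cite{LevMan2006} -- so there is no internal proof to compare against; what you have produced is a valid self-contained derivation. The skeleton is sound: in the case where some maximum matching avoids $e$, the K\"{o}nig-Egerv\'{a}ry property of $G-e$ follows immediately from $\alpha(G-e)\geq\alpha(G)$, $\mu(G-e)=\mu(G)$ and the universal bound $\alpha+\mu\leq n$; in the complementary case, Theorem \ref{th715}\emph{(iii)} indeed forces every $S\in\Omega(G)$ to contain exactly one endpoint of $e$, and together with $x,y\notin\mathrm{core}(G)$ this yields the two sets $S_1,S_2$ you need. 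The less obvious claims all check out: every maximum matching of $G$ pairs $S_1-S_2$ with $S_2-S_1$ (apply Theorem \ref{th715}\emph{(iii)} to $S_1$ and to $S_2$ simultaneously), so a perfect matching of $B-e$ could be swapped in to produce a maximum matching of $G$ avoiding $e$; the Hall-violator count $|N_{B-e}(T)|<|T|\leq|N_B(T)|\leq|N_{B-e}(T)|+1$ pins down $|N_B(T)|=|T|$, $x\in T$ and $N_B(y)\cap T=\{x\}$; and the exchange set $S^{*}=(S_1-N_B(T))\cup T$ is independent of size $\alpha(G)$ with $N_G(y)\cap S^{*}=\{x\}$, giving $\alpha(G-e)=\alpha(G)+1$ and hence $\alpha(G-e)+\mu(G-e)=n(G-e)$. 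A reader with access to the machinery of the present paper would more likely argue via Theorem \ref{th2}\emph{(i)},\emph{(ii)} -- the decomposition of every maximum matching into a matching of $N(\mathrm{core}(G))$ into $\mathrm{core}(G)$ plus a perfect matching of $G-N[\mathrm{core}(G)]$ -- which localizes the unavoidable edges inside $(\mathrm{core}(G),N(\mathrm{core}(G)))$ more directly; your symmetric-difference route costs more bookkeeping but needs only Theorem \ref{th715}\emph{(iii)} and Hall's theorem.
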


\begin{proposition}
\label{prop15}There are no $\alpha$-critical edges of a graph $G$ in
$(N(\mathrm{core}(G),N\left(  N(\mathrm{core}(G)\right)  )$.
\end{proposition}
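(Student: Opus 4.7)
My plan is to argue by contradiction. Suppose that an edge $e=xy$ lies in $(N(\mathrm{core}(G)),N(N(\mathrm{core}(G))))$ and is $\alpha$-critical; without loss of generality, take $x\in N(\mathrm{core}(G))$. Because $\alpha(G-e)>\alpha(G)$ and deletion of a single edge can raise the independence number by at most one, there is an independent set $S'$ of $G-e$ with $|S'|=\alpha(G)+1$. Since $|S'|>\alpha(G)$, the set $S'$ is not independent in $G$, so the unique edge present in $G$ but absent from $G-e$, namely $e$ itself, must be induced by $S'$; equivalently, $\{x,y\}\subseteq S'$.

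The crux of the argument is then the observation that both $S'\setminus\{x\}$ and $S'\setminus\{y\}$ are independent sets in $G$. Indeed, the only way an edge of $G$ can fail to be an edge of $G-e$ is to equal $e=xy$, and removing either endpoint of $e$ from $S'$ kills its contribution to the induced subgraph. Each of these two sets has cardinality $\alpha(G)$, so each belongs to $\Omega(G)$, and therefore each contains $\mathrm{core}(G)$.

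I would then pick a neighbor $c\in\mathrm{core}(G)$ of $x$, which exists because $x\in N(\mathrm{core}(G))$, and split into two short cases. If $c\neq y$, then $cx\in E(G-e)$, while $c\in S'\setminus\{x\}\subseteq S'$ and $x\in S'$, contradicting independence of $S'$ in $G-e$. If instead $c=y$, then $y\in\mathrm{core}(G)\subseteq S'\setminus\{y\}$, which is absurd since $y\notin S'\setminus\{y\}$. Either alternative yields a contradiction, completing the proof.

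I do not anticipate a substantive obstacle. Notice that the second membership condition, $y\in N(N(\mathrm{core}(G)))$, is in fact automatic once $x\in N(\mathrm{core}(G))$ (because $y$ is a neighbor of $x$ and $x$ itself lies in $N(\mathrm{core}(G))$), so the whole argument hinges only on the fact that $x$ has a neighbor in $\mathrm{core}(G)$, together with the single observation that removing either endpoint of a supposed $\alpha$-enlarging edge from the enlarged independent set yields an element of $\Omega(G)$.
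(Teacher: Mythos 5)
Your proof is correct and follows essentially the same route as the paper: both arguments reduce to the observation that the endpoint $x\in N(\mathrm{core}(G))$ of an $\alpha$-critical edge must lie in some maximum independent set of $G$, which, containing $\mathrm{core}(G)$, cannot also contain a vertex adjacent to a core vertex. You merely make explicit the standard fact that $S'\setminus\{x\}$ and $S'\setminus\{y\}$ are maximum independent sets of $G$ (which the paper invokes implicitly) and add a case split on $c=y$ that the paper's phrasing absorbs automatically.
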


\begin{proof}
If $e=xy,x\in N(\mathrm{core}(G),y\in N\left(  N(\mathrm{core}(G)\right)  $ is
$\alpha$-critical in $G$, then there exists a maximum independent set $S$ in
$G$ such that $x\in S$. Since $x\in N(\mathrm{core}(G)$, there is some vertex
$z\in\mathrm{core}(G)$ and $zx\in E\left(  G\right)  $. Hence, $z\notin S$,
which is impossible, because $z\in\mathrm{core}(G)$.
\end{proof}

\begin{proposition}
\cite{LevMan2013a}\label{prop14} For every edge $e\in\left(  \ker\left(
G\right)  ,N\left(  \ker\left(  G\right)  \right)  \right)  $, there exist two
matchings $M_{1}$ and $M_{2}$ from $N\left(  \ker\left(  G\right)  \right)  $
into $\ker\left(  G\right)  $ such that $e\notin M_{1}$ and $e\in M_{2}$.
\end{proposition}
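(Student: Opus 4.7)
The plan is to use Theorem \ref{Th11} as the main engine. Since $\ker(G)$ is itself a critical independent set, that theorem provides, for every vertex $v\in\ker(G)$, a matching from $N(\ker(G))$ into $\ker(G)-\{v\}$. Writing $e=xy$ with $x\in\ker(G)$ and $y\in N(\ker(G))$, I would apply this statement with $v=x$ to obtain a matching $M_{1}$ from $N(\ker(G))$ into $\ker(G)-\{x\}$. Because $x$ is not saturated by $M_{1}$, the edge $e=xy$ cannot lie in $M_{1}$. This settles the first half of the claim.

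For the matching $M_{2}$ containing $e$, I would modify $M_{1}$ by a single alternating swap. Since $M_{1}$ saturates every vertex of $N(\ker(G))$, in particular $y$ is $M_{1}$-saturated by some edge $zy$ with $z\in\ker(G)-\{x\}$. Define
\[
M_{2}=(M_{1}\setminus\{zy\})\cup\{xy\}.
\]
Removing $zy$ frees $y$ on the $N(\ker(G))$-side and frees $z$ on the $\ker(G)$-side, while adding $xy$ re-saturates $y$ and newly saturates $x$, which was free in $M_{1}$. No endpoint conflicts arise because the only $M_{1}$-edge at $y$ was $zy$ and $x$ was unsaturated in $M_{1}$. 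Thus $M_{2}$ is still a matching from $N(\ker(G))$ into $\ker(G)$, and by construction $e\in M_{2}$.

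The real work is concealed in Theorem \ref{Th11}; once that characterization of $\ker(G)$ is available, the argument reduces to a local one-edge swap. The only point requiring a brief verification is that $y$ is indeed saturated by $M_{1}$, which follows because $M_{1}$ covers all of $N(\ker(G))$. Beyond this, no further structural properties of K\"{o}nig--Egerv\'{a}ry graphs or of critical sets are invoked.
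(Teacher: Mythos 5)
Your argument is correct. The paper itself does not prove Proposition \ref{prop14}; it imports it from the cited reference, so there is no in-text proof to compare against. Your derivation from Theorem \ref{Th11} is sound and self-contained: taking $A=\ker(G)$ (which is indeed a critical independent set, by Theorem \ref{th4}\emph{(iii)}) and $v=x$ yields a matching $M_{1}$ from $N(\ker(G))$ into $\ker(G)-\{x\}$; since $\ker(G)$ is independent, $x\notin N(\ker(G))$, so $x$ is untouched by $M_{1}$ and hence $e\notin M_{1}$. The one-edge swap producing $M_{2}$ is also valid: $y$ is saturated by $M_{1}$ via some $zy$ with $z\in\ker(G)-\{x\}$, and replacing $zy$ by $xy$ creates no conflicts precisely because $x$ was $M_{1}$-unsaturated and $zy$ was the only $M_{1}$-edge at $y$; the resulting matching still saturates all of $N(\ker(G))$ into $\ker(G)$. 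The only thing worth flagging is that the statement silently presumes $(\ker(G),N(\ker(G)))$ is where $e$ lives, so $x\ne z$ is guaranteed by the codomain $\ker(G)-\{x\}$ of $M_{1}$ --- a point you correctly use. No gap.
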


\begin{theorem}
\label{prop1}If $G$ is a K\"{o}nig-Egerv\'{a}ry graph, then

\emph{(i)} there are no $\mu$-critical edges of $G$ in $(\ker\left(  G\right)
,N(\ker\left(  G\right)  )$;

\emph{(ii)} there are no $\mu$-critical edges of $G$ in $(\mathrm{core}%
(G)-\ker\left(  G\right)  ,N(\ker\left(  G\right)  )$;

\emph{(iii)} the only possible location for $\mu$-critical edges of $G$ in
$(\mathrm{core}(G),N(\mathrm{core}(G))$ is
\[
\left(  \mathrm{core}(G)-\ker\left(  G\right)  ,N\left(  \mathrm{core}%
(G)\right)  -N\left(  \ker\left(  G\right)  \right)  \right)  ;
\]

\emph{(iv)} the maximum possible number of $\mu$-critical edges of $G$ in
$\left(  \mathrm{core}(G),N(\mathrm{core}(G)\right)  $ is equal to $\xi\left(
G\right)  -\varepsilon\left(  G\right)  $;

\emph{(v)} $\varrho_{e}\left(  G\right)  \geq m\left(  G\right)  -\xi\left(
G\right)  +\varepsilon\left(  G\right)  $.
\end{theorem}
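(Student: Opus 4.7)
The unifying idea is the characterization that an edge $e$ is $\mu$-critical if and only if it lies in every maximum matching of $G$: to exclude $\mu$-criticality in a region I either build a maximum matching avoiding $e$, or show that every maximum matching pairs off an endpoint of $e$ elsewhere. With this in hand, parts (i)--(iv) will follow from the rigid matching structure inside $G[N[\ker(G)]]$, and part (v) will be an edge-count against the surviving critical edges.

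For (i), given $e \in (\ker(G), N(\ker(G)))$, I invoke Proposition \ref{prop14} to produce a matching $M_1$ from $N(\ker(G))$ into $\ker(G)$ avoiding $e$. Since $M_1$ is a maximum matching of $G[N[\ker(G)]]$, Corollary \ref{cor110} lifts it to a maximum matching of $G$ missing $e$, so $e$ is not $\mu$-critical. For (ii), with $e = xy$, $x \in \mathrm{core}(G) - \ker(G)$ and $y \in N(\ker(G))$, I use the ``trace'' direction of Corollary \ref{cor110}: every maximum matching of $G$ restricts to a maximum matching of $G[N[\ker(G)]]$, which by Theorem \ref{th3}(iii) matches $y$ into $\ker(G)$; since $x \notin \ker(G)$, the edge $e$ is excluded from every maximum matching. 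Part (iii) is then a case analysis: any edge in $(\mathrm{core}(G), N(\mathrm{core}(G)))$ surviving both (i) and (ii) must have its $\mathrm{core}(G)$-endpoint outside $\ker(G)$ and its other endpoint outside $N(\ker(G))$.

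For (iv), Theorem \ref{cor9}(ii) supplies, inside every maximum matching of $G$, a perfect matching between $\mathrm{core}(G) - \ker(G)$ and $N(\mathrm{core}(G)) - N(\ker(G))$ of cardinality $\xi(G) - \varepsilon(G)$. By the opening characterization, every $\mu$-critical edge belongs to every maximum matching; by (iii) the $\mu$-critical edges inside $(\mathrm{core}(G), N(\mathrm{core}(G)))$ all lie between these two sets. Hence collectively they form a submatching of this size-$(\xi(G) - \varepsilon(G))$ perfect matching, which yields the desired upper bound.

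For (v), I count the edges whose removal destroys the K\"{o}nig-Egerv\'{a}ry property. Theorem \ref{cor999} confines such edges to $(\mathrm{core}(G), N(\mathrm{core}(G)))$. I then show that a non-$\mu$-critical edge $e$ in this region leaves the property intact: from $\mu(G-e) = \mu(G)$ together with $\alpha(G-e) \geq \alpha(G)$ and the inequality $\alpha(G-e) + \mu(G-e) \leq n(G-e) = n(G) = \alpha(G) + \mu(G)$, I conclude $\alpha(G-e) = \alpha(G)$ and hence $\alpha(G-e) + \mu(G-e) = n(G-e)$, so $G - e$ remains K\"{o}nig-Egerv\'{a}ry. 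Only $\mu$-critical edges in $(\mathrm{core}(G), N(\mathrm{core}(G)))$ can thus harm the property, and by (iv) there are at most $\xi(G) - \varepsilon(G)$ of them, giving $\varrho_e(G) \geq m(G) - \xi(G) + \varepsilon(G)$. The main obstacle is (iv): translating the per-matching statement of Theorem \ref{cor9}(ii) into a global bound on $\mu$-critical edges rests on the characterization of $\mu$-criticality as membership in every maximum matching, which lets the $\mu$-critical edges simultaneously pack into a single maximum matching, and hence into the one perfect matching of size $\xi(G) - \varepsilon(G)$ that the theorem isolates.
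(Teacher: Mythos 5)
Your proof is correct and follows essentially the same route as the paper: Proposition \ref{prop14} plus Corollary \ref{cor110} for \emph{(i)}, the rigidity of maximum matchings on $N\left[ \ker(G)\right] $ for \emph{(ii)}, and Theorem \ref{cor9}\emph{(ii)} combined with the fact that a $\mu$-critical edge lies in every maximum matching for \emph{(iv)}. The one genuine divergence is in \emph{(v)}: the paper first rules out $\alpha$-critical edges in $(\mathrm{core}(G),N(\mathrm{core}(G)))$ via Proposition \ref{prop15} and then uses the equivalence ``for a non-$\alpha$-critical edge, $G-e$ is K\"{o}nig-Egerv\'{a}ry iff $e$ is not $\mu$-critical,'' whereas you show directly, via the chain $n(G-e)=n(G)=\alpha(G)+\mu(G)\leq\alpha(G-e)+\mu(G-e)\leq n(G-e)$, that deleting any non-$\mu$-critical edge of a K\"{o}nig-Egerv\'{a}ry graph preserves the property; this is slightly cleaner, as it renders Proposition \ref{prop15} unnecessary and shows in passing that such an edge is automatically not $\alpha$-critical. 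One small citation slip in \emph{(ii)}: the fact that \emph{every} maximum matching of $G\left[ N\left[ \ker(G)\right] \right] $ matches $N(\ker(G))$ into $\ker(G)$ comes from Theorem \ref{th715}\emph{(iii)} applied to this K\"{o}nig-Egerv\'{a}ry subgraph (whose maximum independent set is $\ker(G)$), not from Theorem \ref{th3}\emph{(iii)}, which only supplies one such matching; the conclusion is unaffected.
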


\begin{proof}
\emph{(i) }By Proposition \ref{prop14}, for every edge $e\in\left(
\ker\left(  G\right)  ,N(\ker\left(  G\right)  \right)  $, there is a maximum
matching in $N\left[  \ker\left(  G\right)  \right]  $ not including $e$. In
other words, the edge $e$ is not $\mu$-critical in $N\left[  \ker\left(
G\right)  \right]  $. Consequently, $e$ is not $\mu$-critical in $G$ as well,
by Corollary \ref{cor110}.

\emph{(ii) }Since both $\mathrm{core}(G)$ and $\ker\left(  G\right)  $ are
critical independent sets in $G$ and $\ker\left(  G\right)  \subseteq
\mathrm{core}(G)$, there exist a matching $M_{1}$ from $N\left(  \ker\left(
G\right)  \right)  $ into $\ker\left(  G\right)  $, a matching $M_{2}$ from
$N\left(  \mathrm{core}(G)\right)  $ into $\mathrm{core}(G)$, and a maximum
matching $M_{3}$ of $G$, such that $M_{1}\subseteq M_{2}\subseteq M_{3}$, in
accordance with Theorem\ \ref{Theorem1}\emph{(iv)}, Theorem \ref{th2}%
\emph{(i)} and Corollary \ref{cor110}. Hence, if $e\in(\mathrm{core}%
(G)-\ker\left(  G\right)  ,$ $N(\ker\left(  G\right)  )$, then $e\notin M_{3}%
$, and, consequently, the edge $e$ is not $\mu$-critical.

\emph{(iii)} It follows from Parts \emph{(i) }and\emph{ (ii).}

\emph{(iv) }By Theorem \ref{cor9}\emph{(ii)}, there is a perfect matching
contained in
\[
\left(  \mathrm{core}(G)-\ker\left(  G\right)  ,N\left(  \mathrm{core}%
(G)\right)  -N\left(  \ker\left(  G\right)  \right)  \right)  .
\]
By Proposition \ref{prop18} and Theorem \ref{cor9}\emph{(ii)}, we complete the proof.

\emph{(v) }Clearly, if the edge $e$ is not $\alpha$-critical, then $G-e$ is
K\"{o}nig-Egerv\'{a}ry if and only if $e$ is not $\mu$-critical. Thus, the
lower bound on $\varrho_{e}\left(  G\right)  $ follows by combining Theorem
\ref{cor999}, Proposition \ref{prop15} and Part \emph{(iv)}.
\end{proof}

\begin{corollary}
\label{cor88} If $G$ is a K\"{o}nig-Egerv\'{a}ry graph and $\mathrm{core}%
(G)=\ker\left(  G\right)  $, then $\varrho_{e}\left(  G\right)  =m\left(
G\right)  $.
\end{corollary}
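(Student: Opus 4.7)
The plan is to derive Corollary \ref{cor88} as an immediate consequence of Theorem \ref{prop1}\emph{(v)} combined with the trivial upper bound $\varrho_{e}(G)\leq m(G)$. Indeed, $\varrho_{e}(G)$ counts edges of $G$ whose deletion yields a K\"{o}nig-Egerv\'{a}ry graph, and there are only $m(G)$ edges to begin with, so the inequality $\varrho_{e}(G)\leq m(G)$ holds for every graph without further argument.

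First, I would observe that the hypothesis $\mathrm{core}(G)=\ker(G)$ forces $\xi(G)=\varepsilon(G)$, since by definition $\xi(G)=|\mathrm{core}(G)|$ and $\varepsilon(G)=|\ker(G)|$. Then I would apply Theorem \ref{prop1}\emph{(v)}, which gives
\[
\varrho_{e}(G)\geq m(G)-\xi(G)+\varepsilon(G)=m(G).
\]
Combining with $\varrho_{e}(G)\leq m(G)$ yields $\varrho_{e}(G)=m(G)$, as required.

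There is essentially no obstacle here: the only thing to notice is that under the hypothesis, the contribution $-\xi(G)+\varepsilon(G)$ in the bound of Theorem \ref{prop1}\emph{(v)} collapses to zero. Conceptually, the reason is that the possible $\mu$-critical edges incident to $\mathrm{core}(G)$ lie, by Theorem \ref{prop1}\emph{(iii)}, inside $\bigl(\mathrm{core}(G)-\ker(G),\,N(\mathrm{core}(G))-N(\ker(G))\bigr)$, and this set is empty precisely when $\mathrm{core}(G)=\ker(G)$. Together with Theorem \ref{cor999} (which ensures that edges outside $(\mathrm{core}(G),N(\mathrm{core}(G)))$ never destroy the K\"{o}nig-Egerv\'{a}ry property) and Proposition \ref{prop15} (which rules out $\alpha$-critical edges in the relevant location), this leaves no obstruction: deleting any edge of $G$ preserves the K\"{o}nig-Egerv\'{a}ry property, and so $\varrho_{e}(G)=m(G)$.
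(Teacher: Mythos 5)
Your proof is correct and is essentially identical to the paper's: both deduce $\xi(G)=\varepsilon(G)$ from the hypothesis, apply Theorem \ref{prop1}\emph{(v)} to get $\varrho_{e}(G)\geq m(G)$, and combine with the trivial upper bound $\varrho_{e}(G)\leq m(G)$. The additional conceptual remarks you add are consistent with the paper but not needed for the argument.
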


\begin{proof}
If $\mathrm{core}(G)=\ker\left(  G\right)  $, then $\xi\left(  G\right)
=\varepsilon\left(  G\right)  $, and, consequently, by Theorem \ref{prop1}%
\emph{(v)},
\[
m\left(  G\right)  \geq\varrho_{e}\left(  G\right)  \geq m\left(  G\right)
-\xi\left(  G\right)  +\varepsilon\left(  G\right)  =m\left(  G\right)  +0,
\]
which means that $\varrho_{e}\left(  G\right)  =m\left(  G\right)  $.
\end{proof}

The graph $K_{4}-e$ shows that the converse of Corollary \ref{cor88} is not true.

There are K\"{o}nig-Egerv\'{a}ry graphs with $\xi\left(  G\right)  \neq0$,
while $\varrho_{e}\left(  G\right)  =m\left(  G\right)  $; for example, the
graph $G_{1}$ from Figure \ref{fig333}. Nevertheless, $\mathrm{core}%
(G_{1})=\ker(G_{1})$.

\begin{figure}[h]
\setlength{\unitlength}{1cm}\begin{picture}(5,1.2)\thicklines
\multiput(3,0)(1,0){4}{\circle*{0.29}}
\multiput(3,1)(1,0){3}{\circle*{0.29}}
\put(3,0){\line(1,0){3}}
\put(3,1){\line(1,0){1}}
\put(4,0){\line(0,1){1}}
\put(5,0){\line(0,1){1}}
\put(4,1){\line(1,-1){1}}
\put(5.35,1){\makebox(0,0){$x$}}
\put(6,0.35){\makebox(0,0){$y$}}
\put(2.3,0.5){\makebox(0,0){$G_{1}$}}
\multiput(8,0)(1,0){4}{\circle*{0.29}}
\multiput(9,1)(1,0){2}{\circle*{0.29}}
\put(8,0){\line(1,1){1}}
\put(8,0){\line(1,0){3}}
\put(9,1){\line(1,0){1}}
\put(10,0){\line(0,1){1}}
\put(11.35,0){\makebox(0,0){$a$}}
\put(10.3,0.35){\makebox(0,0){$b$}}
\put(7.3,0.5){\makebox(0,0){$G_{2}$}}
\end{picture}\caption{K\"{o}nig-Egerv\'{a}ry graphs with $\mathrm{core}%
(G_{1})=\left\{  x,y\right\}  $ and $\mathrm{core}(G_{2})=\left\{  a\right\}
$.}%
\label{fig333}%
\end{figure}

\begin{corollary}
\label{cor2}If $G$ is a K\"{o}nig-Egerv\'{a}ry graph, then
\[
\varrho_{e}\left(  G\right)  =m\left(  G\right)  -\xi\left(  G\right)
+\varepsilon\left(  G\right)  ,
\]
if and only
\[
\left(  \mathrm{core}(G)-\ker\left(  G\right)  ,N\left(  \mathrm{core}%
(G)\right)  -N\left(  \ker\left(  G\right)  \right)  \right)
\]
contains a unique perfect matching.
\end{corollary}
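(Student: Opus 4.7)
The plan is to show that equality in Theorem \ref{prop1}(v) is equivalent to matching-uniqueness in the bipartite graph $H' = (B, C)$, where $B = \mathrm{core}(G) - \ker(G)$ and $C = N(\mathrm{core}(G)) - N(\ker(G))$. By Theorem \ref{cor9}(ii), $H'$ contains a perfect matching of size $|B| = |C| = \xi(G) - \varepsilon(G)$, and all of the analysis will happen inside this interface.

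First, I would verify that the edges $e \in E(G)$ with $G - e$ not K\"{o}nig-Egerv\'{a}ry are exactly the $\mu$-critical edges of $G$ lying in $H'$. Theorem \ref{cor999} confines such ``bad'' edges to $(\mathrm{core}(G), N(\mathrm{core}(G)))$, and Theorem \ref{prop1}(iii) restricts the $\mu$-critical ones among these further to $H'$. Moreover, in a K\"{o}nig-Egerv\'{a}ry graph any $\alpha$-critical edge $uv$ has both endpoints outside $\mathrm{core}(G)$: since $\alpha(G - uv) = \alpha(G) + 1$, there exists an independent set $S$ of $G - uv$ of this size with $u, v \in S$, and then $S - \{u\}$ is a maximum independent set of $G$ avoiding $u$, forcing $u \notin \mathrm{core}(G)$, and similarly $v \notin \mathrm{core}(G)$. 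Hence $\alpha$-critical edges lie outside $(\mathrm{core}(G), N(\mathrm{core}(G)))$ and, by Theorem \ref{cor999}, are not bad. Consequently, $\varrho_e(G) = m(G) - \xi(G) + \varepsilon(G)$ iff $G$ has exactly $\xi(G) - \varepsilon(G)$ many $\mu$-critical edges inside $H'$.

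Next I would show that an edge of $H'$ is $\mu$-critical in $G$ iff it belongs to every perfect matching of $H'$. By Theorem \ref{cor9}(ii), the edges of any maximum matching of $G$ that lie inside $H'$ form a perfect matching of $H'$; conversely, any perfect matching $M^*$ of $H'$ extends to a maximum matching of $G$ by appending a matching from $N(\ker(G))$ into $\ker(G)$ (Theorem \ref{Theorem1}(iv) applied with $A = \ker(G)$) together with a perfect matching of $G - N[\mathrm{core}(G)]$ (Theorem \ref{th2}(ii)). Both directions of the equivalence then follow by a restriction/extension argument: an edge absent from some perfect matching of $H'$ is absent from the maximum matching of $G$ extending it, and an edge present in every perfect matching of $H'$ is present in the restriction of every maximum matching of $G$. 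Combining this with the elementary bipartite-matching fact that in a balanced bipartite graph admitting a perfect matching the edges common to all perfect matchings number the full matching size iff the perfect matching is unique -- two distinct perfect matchings differ by a disjoint union of alternating even cycles, each of whose edges fails to lie in one of them -- one concludes that the count $\xi(G) - \varepsilon(G)$ is attained iff $H'$ has a unique perfect matching. The main technical hurdle is the passage in the second step between $\mu$-criticality in $G$ and in $H'$, but it reduces cleanly to the block decomposition of maximum matchings of $G$ provided by Theorem \ref{cor9} and Theorem \ref{th2}(ii).
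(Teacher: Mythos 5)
Your argument is correct and follows essentially the same route as the paper: both reduce the equality $\varrho_{e}\left(  G\right)  =m\left(  G\right)  -\xi\left(  G\right)  +\varepsilon\left(  G\right)  $ to counting the $\mu$-critical edges inside $\left(  \mathrm{core}(G)-\ker\left(  G\right)  ,N\left(  \mathrm{core}(G)\right)  -N\left(  \ker\left(  G\right)  \right)  \right)  $ via Theorem \ref{prop1} and Theorem \ref{cor9}\emph{(ii)}, and both settle uniqueness by the symmetric-difference (alternating even cycle) argument for two distinct perfect matchings. The only difference is one of completeness: you spell out the restriction/extension correspondence between maximum matchings of $G$ and perfect matchings of the interface, and you give a direct proof that $\alpha$-critical edges avoid $\left(  \mathrm{core}(G),N\left(  \mathrm{core}(G)\right)  \right)  $ entirely, steps the paper delegates to Corollary \ref{cor110}, Theorem \ref{cor999} and Proposition \ref{prop15}.
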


\begin{proof}
If $M$ is the unique perfect matching contained in
\[
\left(  \mathrm{core}(G)-\ker\left(  G\right)  ,N\left(  \mathrm{core}%
(G)\right)  -N\left(  \ker\left(  G\right)  \right)  \right)  ,
\]
then every its edge is $\mu$-critical in $G$.

If there are more than one perfect matching in
\[
\left(  \mathrm{core}(G)-\ker\left(  G\right)  ,N\left(  \mathrm{core}%
(G)\right)  -N\left(  \ker\left(  G\right)  \right)  \right)  ,
\]
say $M_{1},M_{2}$ an so on, then $M_{1}-$\ $M_{2}\neq\emptyset$, and,
consequently, the number of $\mu$-critical edges is less or equal to
$\xi\left(  G\right)  -\varepsilon\left(  G\right)  -1$.
\end{proof}

If $G$ is a K\"{o}nig-Egerv\'{a}ry graph with $\xi\left(  G\right)
-\varepsilon\left(  G\right)  =1$, then a perfect matching between
$\mathrm{core}(G)-\ker\left(  G\right)  $ and $N\left(  \mathrm{core}%
(G)\right)  -N\left(  \ker\left(  G\right)  \right)  $ is, clearly, unique.
Hence, Corollary \ref{cor2} directly implies the following.

\begin{corollary}
\label{cor114}If $G$ is a K\"{o}nig-Egerv\'{a}ry graph and $\varepsilon\left(
G\right)  =\xi\left(  G\right)  -1$, then
\[
\varrho_{e}\left(  G\right)  =m\left(  G\right)  -1.
\]

\end{corollary}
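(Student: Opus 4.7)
The plan is to deduce this directly from Corollary \ref{cor2}. That corollary already states that $\varrho_{e}(G) = m(G) - \xi(G) + \varepsilon(G)$ is equivalent to the existence of a \emph{unique} perfect matching inside the bipartite piece $(\mathrm{core}(G) - \ker(G), N(\mathrm{core}(G)) - N(\ker(G)))$. So the whole task reduces to verifying that the hypothesis $\varepsilon(G) = \xi(G) - 1$ forces such uniqueness.

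First, I would observe that under the hypothesis we have $|\mathrm{core}(G) - \ker(G)| = \xi(G) - \varepsilon(G) = 1$. Next, Theorem \ref{cor9}(ii) guarantees that every maximum matching of $G$ restricts to a perfect matching between $\mathrm{core}(G) - \ker(G)$ and $N(\mathrm{core}(G)) - N(\ker(G))$; in particular, both sides must have the same cardinality, namely $1$. Thus the bipartite set in question has one vertex on each side, and any perfect matching there consists of a single edge. Hence such a perfect matching exists and is trivially unique.

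Having checked both the existence and uniqueness conditions, I would invoke Corollary \ref{cor2} to conclude $\varrho_{e}(G) = m(G) - \xi(G) + \varepsilon(G) = m(G) - 1$. Since the deduction is essentially mechanical once the cardinality count is made, there is no real obstacle; the only thing to be careful about is citing Theorem \ref{cor9}(ii) to ensure that the two sides of the relevant bipartite piece have equal size, so that ``perfect matching of size one'' is automatic and uniqueness is immediate.
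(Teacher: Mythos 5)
Your proof is correct and follows essentially the same route as the paper: both reduce the claim to Corollary \ref{cor2} by noting that $\xi(G)-\varepsilon(G)=1$ forces $\left\vert \mathrm{core}(G)-\ker(G)\right\vert =1$, so the perfect matching in the relevant bipartite piece is a single edge and hence unique. Your explicit appeal to Theorem \ref{cor9}\emph{(ii)} for existence and for equality of the two sides is a slightly more careful version of what the paper leaves as ``clearly.''
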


In accordance with Theorem \ref{th9}\emph{(iv)}, Theorem \ref{prop1}\emph{(v)}
and Corollary \ref{cor114} we obtain the following.

\begin{theorem}
If $G$ is a K\"{o}nig-Egerv\'{a}ry graph, then $n\left(  G\right)
-\varrho_{v}\left(  G\right)  \geq m\left(  G\right)  -\varrho_{e}\left(
G\right)  $, while $n\left(  G\right)  -\varrho_{v}\left(  G\right)  =m\left(
G\right)  -\varrho_{e}\left(  G\right)  $ for $\xi\left(  G\right)
-\varepsilon\left(  G\right)  \leq1$.
\end{theorem}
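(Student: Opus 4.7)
The plan is to reduce everything to the single quantity $\xi(G)-\varepsilon(G)$, which controls both sides of the claimed (in)equality. First I would rewrite the left-hand side: by Theorem~\ref{th9}\emph{(iv)}, $\varrho_v(G) = n(G) - \xi(G) + \varepsilon(G)$, hence
\[
n(G) - \varrho_v(G) = \xi(G) - \varepsilon(G).
\]
Next I would rewrite the right-hand side: by Theorem~\ref{prop1}\emph{(v)}, $\varrho_e(G) \geq m(G) - \xi(G) + \varepsilon(G)$, which rearranges to
\[
m(G) - \varrho_e(G) \leq \xi(G) - \varepsilon(G).
\]
Combining these two identities/inequalities yields $n(G) - \varrho_v(G) \geq m(G) - \varrho_e(G)$, which is the first conclusion.

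For the equality statement, I would split into the two sub-cases permitted by the hypothesis $\xi(G) - \varepsilon(G) \leq 1$. If $\xi(G) = \varepsilon(G)$, then $\mathrm{core}(G) = \ker(G)$ (recall $\ker(G) \subseteq \mathrm{core}(G)$ by Theorem~\ref{th4}\emph{(i)}), so Corollary~\ref{cor88} gives $\varrho_e(G) = m(G)$, whence $m(G) - \varrho_e(G) = 0 = \xi(G) - \varepsilon(G) = n(G) - \varrho_v(G)$. If instead $\xi(G) - \varepsilon(G) = 1$, Corollary~\ref{cor114} gives $\varrho_e(G) = m(G) - 1$, hence $m(G) - \varrho_e(G) = 1 = \xi(G) - \varepsilon(G) = n(G) - \varrho_v(G)$. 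In both cases, equality holds.

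There is essentially no obstacle here: the theorem is a bookkeeping consequence of four previously established results (Theorem~\ref{th9}\emph{(iv)}, Theorem~\ref{prop1}\emph{(v)}, Corollary~\ref{cor88}, and Corollary~\ref{cor114}). The only thing to be careful about is the direction of each inequality and making sure the two equality sub-cases exhaust the hypothesis $\xi(G) - \varepsilon(G) \leq 1$, which they do since $\xi(G) \geq \varepsilon(G)$ always holds (as $\ker(G) \subseteq \mathrm{core}(G)$).
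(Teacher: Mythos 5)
Your proof is correct and matches the paper's own derivation, which likewise obtains the inequality from Theorem \ref{th9}\emph{(iv)} and Theorem \ref{prop1}\emph{(v)} and the equality case from Corollary \ref{cor114} (your explicit appeal to Corollary \ref{cor88} for the sub-case $\xi(G)=\varepsilon(G)$ is a welcome touch of completeness, since the paper's one-line attribution cites only Corollary \ref{cor114}). No issues.
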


If for every two incident edges of a cycle $C$ exactly one of them belongs to
a matching $M$, then $C$ is called an $M$-alternating cycle \cite{Krohdal1977}%
. It is clear that an $M$-alternating cycle should be of even length. A
matching $M$ in $G$ is called alternating cycle-free if $G$ has no
$M$-alternating cycle.

\begin{lemma}
\cite{GolumbicHL2001}\label{lem8} Let $M$ be a perfect matching in $G$. Then
$M$ is a unique perfect matching if and only if $M$ is alternating cycle-free
in $G$.
\end{lemma}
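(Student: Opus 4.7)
The plan is to prove both directions by analyzing the symmetric difference of perfect matchings and by producing explicit alternative matchings from alternating cycles.

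For the forward direction, I would argue by contrapositive. Suppose $C$ is an $M$-alternating cycle in $G$; since such a cycle has even length, its edges split into two sets $C\cap M$ and $C\setminus M$ of equal size, each of which is a perfect matching of $V(C)$. Then $M' = (M\setminus C)\cup(C\setminus M)$ still saturates every vertex of $G$ (vertices outside $V(C)$ keep their $M$-partners, while vertices on $C$ are covered by $C\setminus M$), so $M'$ is a perfect matching distinct from $M$, contradicting uniqueness.

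For the reverse direction, assume $M$ is alternating cycle-free and suppose, toward a contradiction, that $M'$ is another perfect matching with $M'\neq M$. Consider $H = (V(G), M\triangle M')$. Every vertex of $G$ is incident to at most one edge of $M$ and at most one edge of $M'$, so each vertex has degree at most $2$ in $H$; moreover, a vertex $v$ has degree $1$ in $H$ only if exactly one of its two matching partners (one from $M$, one from $M'$) coincides, which would force equal edges, so in fact every vertex of $H$ has degree $0$ or $2$. Hence $H$ is a disjoint union of even cycles alternating between $M$-edges and $M'$-edges. Since $M\neq M'$, the set $M\triangle M'$ is nonempty, so at least one such cycle $C$ exists, and $C$ is by construction $M$-alternating in $G$. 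This contradicts the hypothesis that $M$ is alternating cycle-free.

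The only mildly delicate step is the parity/degree argument showing that $M\triangle M'$ decomposes into even alternating cycles, which I would record cleanly as: each vertex is incident to exactly one edge of $M$ and exactly one edge of $M'$, and these two edges are either equal (and contribute nothing to the symmetric difference) or distinct (and contribute degree $2$, one edge from each matching). Combining the two directions yields the equivalence.
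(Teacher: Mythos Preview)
Your proof is correct; this is the standard symmetric-difference argument for this classical fact. Note that the paper does not actually prove this lemma---it is quoted from \cite{GolumbicHL2001} and used as a black box---so there is no in-paper proof to compare against.
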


\begin{theorem}
\label{cor11212}The number of $\mu$-critical edges of a K\"{o}nig-Egerv\'{a}ry
graph $G$ belonging to $(\mathrm{core}(G),N(\mathrm{core}(G))$, i.e.,
$m\left(  G\right)  -\varrho_{e}\left(  G\right)  $, may be either $\xi\left(
G\right)  -\varepsilon\left(  G\right)  $ or any integer in $\left\{
0,\cdots,\xi\left(  G\right)  -\varepsilon\left(  G\right)  -2\right\}  $,
whenever $\xi\left(  G\right)  -\varepsilon\left(  G\right)  \geq2$.
\end{theorem}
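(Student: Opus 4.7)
The plan is to reduce $m(G)-\varrho_e(G)$ to counting edges common to every perfect matching of a single bipartite subgraph, and then invoke Lemma~\ref{lem8}. Set $k=\xi(G)-\varepsilon(G)$ and let $H$ be the bipartite subgraph of $G$ induced on $(\mathrm{core}(G)-\ker(G))\cup(N(\mathrm{core}(G))-N(\ker(G)))$, whose two sides have $k$ vertices each and are joined by a perfect matching (Theorem~\ref{cor9}(ii)). By Theorem~\ref{prop1}(iii), the $\mu$-critical edges of $G$ lying in $(\mathrm{core}(G),N(\mathrm{core}(G)))$ are entirely contained in $E(H)$.

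The key preliminary step is to show that an edge $e\in E(H)$ is $\mu$-critical in $G$ if and only if $e$ belongs to every perfect matching of $H$. Using Theorem~\ref{cor9} and Corollary~\ref{cor110}, every maximum matching $M$ of $G$ splits into three parts on disjoint vertex sets: a PM of $H$; a maximum matching of $G[N[\ker(G)]]$; and a perfect matching between $V(G)-S-N(\mathrm{core}(G))$ and $S-\mathrm{core}(G)$ for some $S\in\Omega(G)$. Consequently the $H$-part of $M$ can be swapped for any other PM of $H$ without changing the total size, which gives the equivalence in both directions.

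Given this characterization, $m(G)-\varrho_e(G)$ equals the cardinality of the intersection of all perfect matchings of $H$. If $H$ has a unique PM, this intersection has size $k$, which agrees with Corollary~\ref{cor114} and Corollary~\ref{cor2}. Otherwise, take distinct PMs $M_1\neq M_2$ of $H$; by Lemma~\ref{lem8} their symmetric difference is a nonempty disjoint union of alternating cycles, each of even length at least $4$, so each cycle contributes at least two edges of $M_1$ absent from $M_2$. Therefore $|M_1\cap M_2|\leq k-2$, and a fortiori the intersection of all PMs of $H$ has at most $k-2$ edges. This dichotomy simultaneously places $m(G)-\varrho_e(G)$ in $\{0,1,\ldots,k-2\}\cup\{k\}$ and excludes the value $k-1$.

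For the realizability of each target $j\in\{0,1,\ldots,k-2\}\cup\{k\}$, I would exhibit explicit K\"{o}nig-Egerv\'{a}ry witnesses. The value $j=k$ is realized by any $G$ whose $H$ is itself a perfect matching (for instance a disjoint union of copies of the graph $G_2$ in Figure~\ref{fig33}, where one checks directly that the slice $(\mathrm{core}(G_2)-\ker(G_2))\cup(N(\mathrm{core}(G_2))-N(\ker(G_2)))$ carries a unique PM). For $0\leq j\leq k-2$, I would take $H$ to be the disjoint union of $j$ independent edges and a cycle $C_{2(k-j)}$ (which is available because $k-j\geq 2$); this $H$ has exactly $j$ edges common to every one of its PMs. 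The main obstacle will be embedding such a prescribed $H$ as the genuine slice $(\mathrm{core}(G)-\ker(G))\cup(N(\mathrm{core}(G))-N(\ker(G)))$ of a K\"{o}nig-Egerv\'{a}ry graph without inadvertently enlarging $\ker(G)$; I would handle this by attaching small balanced auxiliary bipartite components with empty kernel whose core is disjoint from $V(H)$, then confirm the K\"{o}nig-Egerv\'{a}ry property of the total graph via Theorem~\ref{th715}(ii) and verify that the attachments do not disturb the bipartite slice $H$.
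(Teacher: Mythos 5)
The first half of your argument is sound and is essentially the paper's: you reduce $m(G)-\varrho_e(G)$ to the number of edges lying in every perfect matching of the slice between $\mathrm{core}(G)-\ker(G)$ and $N(\mathrm{core}(G))-N(\ker(G))$, and you use Lemma \ref{lem8} to observe that two distinct perfect matchings differ along alternating cycles of length at least $4$, which forces the count to be either $k=\xi(G)-\varepsilon(G)$ or at most $k-2$. Your explicit statement of the equivalence ``$e$ is $\mu$-critical in $G$ iff $e$ lies in every perfect matching of the slice,'' via the three-part decomposition of maximum matchings, is a cleaner packaging of what the paper does implicitly through Corollary \ref{cor2} and Theorem \ref{prop1}.

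The realizability half, however, has a genuine gap, and it sits exactly where you flagged ``the main obstacle.'' Your plan is to prescribe the slice $H$ (a union of $j$ forced edges and one cycle $C_{2(k-j)}$) and then attach disjoint balanced bipartite components to obtain a K\"{o}nig-Egerv\'{a}ry graph. This cannot work as stated: in a disjoint union, $\mathrm{core}(G)$ is the union of the cores of the components, and the core of an even cycle (and of any component with a perfect matching in which no edge is forced) is empty. Hence if the component of $G$ containing $V(H)$ is just $H$ itself, then $\mathrm{core}(G)\cap V(H)=\emptyset$, the intended side of $H$ is not in $\mathrm{core}(G)$ at all, and the slice $\left(\mathrm{core}(G)-\ker(G),\,N(\mathrm{core}(G))-N(\ker(G))\right)$ is empty rather than $E(H)$. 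What is needed is a mechanism that forces one side of $H$ into every maximum independent set while keeping $\ker$ small; the paper achieves this with the family $G_p^q$, placing a clique $K_p$ on the $N(\mathrm{core})$-side (so that $\mathrm{core}(G_p^q)=\{b_1,\dots,b_p\}$ and $\ker(G_p^q)=\emptyset$) and wiring $q$ of the $b_i$ into alternating structures so that exactly $p-q=\lambda$ slice edges are $\mu$-critical. Your construction needs an analogous gadget (or you may simply take disjoint unions of the paper's $G_p^p$ together with forced-edge gadgets such as $C_{2k+1}+e$); without it the case $0\leq j\leq k-2$ is not established. A smaller issue of the same kind: your witness for $j=k$ via disjoint copies of $G_2$ from Figure \ref{fig33} only reaches even values of $k$, so odd $k$ also needs an extra gadget.
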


\begin{proof}
By Corollary \ref{cor2} and Proposition \ref{prop1}\emph{(iii)}, the number of
$\mu$-critical edges of $G$ equals $\xi\left(  G\right)  -\varepsilon\left(
G\right)  $ if and only if there is a unique perfect matching in
\[
\left(  \mathrm{core}(G)-\ker\left(  G\right)  ,N\left(  \mathrm{core}%
(G)\right)  -N\left(  \ker\left(  G\right)  \right)  \right)  .
\]

For instance, consider a family of graphs $H_{k}=C_{2k+1}+e,k\geq1$, where $e$
is an edge connecting a vertex, say $v$, from $C_{2k+1}$ with a leaf, say $w$.
Clearly, $\mathrm{core}(H_{k})=\left\{  w\right\}  $, $N\left(  \mathrm{core}%
(H_{k})\right)  =\left\{  v\right\}  $, $\ker\left(  H_{k}\right)  =N\left(
\ker\left(  H_{k}\right)  \right)  =\emptyset$, $\left\{  vw\right\}  $ is a
unique perfect matching between $\mathrm{core}(H_{k})-\ker\left(
H_{k}\right)  $ and $N\left(  \mathrm{core}(H_{k})\right)  -N\left(
\ker\left(  H_{k}\right)  \right)  $, $\xi\left(  H_{k}\right)  -\varepsilon
\left(  H_{k}\right)  =1$, and $\left\{  vw\right\}  $ is the only $\mu
$-critical edge of $H_{k}$ belonging to $(\mathrm{core}(H_{k}),N(\mathrm{core}%
(H_{k}))$.

If $\left(  \mathrm{core}(G)-\ker\left(  G\right)  ,N\left(  \mathrm{core}%
(G)\right)  -N\left(  \ker\left(  G\right)  \right)  \right)  $ has more than
one\ perfect matching, then by Lemma \ref{lem8}, there are alternating cycles
starting with the size $4$ at least. It means that there are no graphs with
the number of $\mu$-critical edges equal to $\xi\left(  G\right)
-\varepsilon\left(  G\right)  -1$.

To complete the proof, let us show that for every integer $\lambda\in\left\{
0,\cdots,\xi\left(  G\right)  -\varepsilon\left(  G\right)  -2\right\}  $,
where $\xi\left(  G\right)  -\varepsilon\left(  G\right)  \geq2$, there exists
a K\"{o}nig-Egerv\'{a}ry graph $G$, such that the number of its $\mu$-critical
edges belonging to $(\mathrm{core}(G),N(\mathrm{core}(G))$ equals $\lambda
$.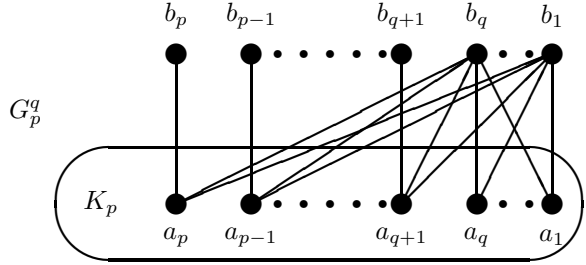
\begin{figure}[h]
\setlength{\unitlength}{1.0cm} \begin{picture}(5,4)\thicklines
\multiput(5,1)(1,0){2}{\circle*{0.29}}
\multiput(5,3)(1,0){2}{\circle*{0.29}}
\multiput(8,1)(1,0){3}{\circle*{0.29}}
\multiput(8,3)(1,0){3}{\circle*{0.29}}
\multiput(6,1)(0.35,0){6}{\circle*{0.1}}
\multiput(6,3)(0.35,0){6}{\circle*{0.1}}
\put(5,1){\line(0,1){2}}
\put(6,1){\line(0,1){2}}
\put(8,1){\line(0,1){2}}
\put(9,1){\line(0,1){2}}
\put(10,1){\line(0,1){2}}
\put(5,1){\line(2,1){4}}
\put(6,1){\line(2,1){4}}
\put(5,1){\line(5,2){5}}
\put(6,1){\line(3,2){3}}
\put(8,1){\line(1,2){1}}
\put(8,1){\line(1,1){2}}
\put(9,1){\line(1,2){1}}
\put(9,3){\line(1,-2){1}}
\put(6.9,1){\oval(7,1.5)}
\put(10,0.55){\makebox(0,0){$a_{1}$}}
\put(10,3.5){\makebox(0,0){$b_{1}$}}
\multiput(9,1)(0.35,0){3}{\circle*{0.1}}
\multiput(9,3)(0.35,0){3}{\circle*{0.1}}
\put(9,0.55){\makebox(0,0){$a_{q}$}}
\put(9,3.5){\makebox(0,0){$b_{q}$}}
\put(8,0.55){\makebox(0,0){$a_{q+1}$}}
\put(8,3.5){\makebox(0,0){$b_{q+1}$}}
\put(6,0.55){\makebox(0,0){$a_{p-1}$}}
\put(6,3.5){\makebox(0,0){$b_{p-1}$}}
\put(5,0.55){\makebox(0,0){$a_{p}$}}
\put(5,3.5){\makebox(0,0){$b_{p}$}}
\put(4,1){\makebox(0,0){$K_{p}$}}
\put(3,2.2){\makebox(0,0){$G_{p}^{q}$}}
\end{picture}\caption{A family of graphs $G_{p}^{q}$ illustrating the second
conclusion of Corollary \ref{cor11212}{.}}%
\label{fig11}%
\end{figure}

Consider the family of K\"{o}nig-Egerv\'{a}ry graphs $G_{p}^{q},p\geq
q\geq2,p-q=\lambda$, such that $G_{p}^{q}\left[  \left\{  a_{1},a_{2}%
,...,a_{p}\right\}  \right]  =K_{p}$, and $\left\{  b_{1},b_{2},...,b_{p}%
\right\}  $ is an independent set in $G_{p}^{q}$ (see Figure \ref{fig11}).
Clearly, $\mathrm{core}(G_{p}^{q})=\left\{  b_{1},b_{2},...,b_{p}\right\}  $
and $\ker\left(  G_{p}^{q}\right)  =\emptyset$. Since $N\left[  \mathrm{core}%
(G_{p}^{q})\right]  =V\left(  G_{p}^{q}\right)  $ and no edge of $G_{p}%
^{q}\left[  \left\{  a_{1},a_{2},...,a_{p}\right\}  \right]  =K_{p}$ is $\mu
$-critical in $G_{p}^{q}$, we get that the number of $\mu$-critical edges of
$G_{p}^{q}$ located in
\[
\left(  \mathrm{core}(G_{p}^{q})-\ker\left(  G_{p}^{q}\right)  ,N\left(
\mathrm{core}(G_{p}^{q})\right)  -N\left(  \ker\left(  G_{p}^{q}\right)
\right)  \right)  =\left(  \mathrm{core}(G_{p}^{q}),N\left(  \mathrm{core}%
(G_{p}^{q})\right)  \right)
\]
equals $p-q$, which means that $\lambda=p-q=\xi\left(  G_{p}^{q}\right)
-\varepsilon\left(  G_{p}^{q}\right)  -q$.
\end{proof}

\begin{figure}[h]
\setlength{\unitlength}{1cm}\begin{picture}(5,1.2)\thicklines
\multiput(5,0)(1,0){4}{\circle*{0.29}}
\multiput(6,1)(1,0){3}{\circle*{0.29}}
\put(5,0){\line(1,0){3}}
\put(6,0){\line(0,1){1}}
\put(7,0){\line(0,1){1}}
\put(7,0){\line(1,1){1}}
\put(7,1){\line(1,-1){1}}
\put(8,0){\line(0,1){1}}
\put(4.65,0){\makebox(0,0){$a$}}
\put(5.65,1){\makebox(0,0){$b$}}
\put(7.35,1){\makebox(0,0){$c$}}
\put(8.35,1){\makebox(0,0){$d$}}
\put(4,0.5){\makebox(0,0){$G$}}
\end{picture}\caption{$G$ is a K\"{o}nig-Egerv\'{a}ry graph with $\ker\left(
G\right)  =\left\{  a,b\right\}  $ and $\mathrm{core}(G)=\left\{
a,b,c,d\right\}  $}%
\label{fig45}%
\end{figure}

It is worth mentioning that the graphs $G_{p}^{q}$ presented in Theorem
\ref{cor11212} show that $m\left(  G\right)  -\xi\left(  G\right)
+\varepsilon\left(  G\right)  $ is a tight bound for $\varrho_{e}\left(
G\right)  $. On the other hand, there exist examples with $m\left(  G\right)
-\varrho_{e}\left(  G\right)  \in\left\{  \xi\left(  G\right)  -\varepsilon
\left(  G\right)  -2,\cdots,0\right\}  $, when $\varepsilon\left(  G\right)
\neq0$ (see Figure \ref{fig45}, where $m\left(  G\right)  =\varrho_{e}\left(
G\right)  =8>m\left(  G\right)  -\xi\left(  G\right)  +\varepsilon\left(
G\right)  =6$).

In addition, notice that the graph $G_{3}$ from Figure \ref{fig33} satisfies
$m\left(  G_{3}\right)  =\varrho_{e}\left(  G_{3}\right)  +1=7=m\left(
G_{3}\right)  -\xi\left(  G_{3}\right)  +\varepsilon\left(  G_{3}\right)  $,
while $\ker\left(  G_{3}\right)  =\left\{  v_{1},v_{2}\right\}  $ and
$\mathrm{core}(G_{3})=\left\{  v_{1},v_{2},v_{3}\right\}  $.

\begin{corollary}
\label{cor116}If $G$ is a K\"{o}nig-Egerv\'{a}ry graph, then $m\left(
G\right)  -\xi\left(  G\right)  +\varepsilon\left(  G\right)  \geq\eta\left(
G\right)  $.
\end{corollary}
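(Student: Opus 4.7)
The plan is to reduce the claim to Proposition \ref{prop3}, which already provides the upper bound $\eta(G) \leq \alpha(G) - \xi(G)$ for every König-Egerváry graph. After this reduction, it suffices to prove the single inequality
\[
\alpha(G) \leq m(G) + \varepsilon(G),
\]
since substituting it into Proposition \ref{prop3} would yield $\eta(G) \leq \alpha(G) - \xi(G) \leq m(G) + \varepsilon(G) - \xi(G)$, which is exactly the target inequality.

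To establish $\alpha(G) \leq m(G) + \varepsilon(G)$, I would invoke Theorem \ref{th2}\emph{(iii)}, which asserts that $\alpha(G) - \mu(G) = d(G)$ for any König-Egerváry graph $G$. The key observation is that $\ker(G)$ is itself a critical independent set, by Theorem \ref{th4}\emph{(iii)}; hence it attains the critical difference:
\[
d(G) = |\ker(G)| - |N(\ker(G))| = \varepsilon(G) - |N(\ker(G))| \leq \varepsilon(G).
\]
Combined with the trivial bound $\mu(G) \leq m(G)$, this gives
\[
\alpha(G) \;=\; \mu(G) + d(G) \;\leq\; \mu(G) + \varepsilon(G) \;\leq\; m(G) + \varepsilon(G),
\]
as required.

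I do not expect a significant obstacle in carrying this out: the three ingredients (Proposition \ref{prop3}, Theorem \ref{th2}\emph{(iii)}, and the fact from Theorem \ref{th4}\emph{(iii)} that $\ker(G)$ is itself critical) are all already available in the paper, and the proof amounts to assembling them in the right order. The only subtlety worth flagging is that one must invoke $\ker(G)$ rather than $\mathrm{core}(G)$ as the set witnessing $d(G)$, because $\mathrm{core}(G)$ need not be critical in general; however, for a König-Egerváry graph this does not matter, as Theorem \ref{th715}\emph{(iv)} would also let $\mathrm{core}(G)$ play that role, and either choice yields the same inequality $d(G) \leq \varepsilon(G)$ via the minimality of $\ker(G)$.
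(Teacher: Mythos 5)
Your proposal is correct, and the reduction to Proposition \ref{prop3} via the auxiliary inequality $\alpha(G)\leq m(G)+\varepsilon(G)$ is exactly the skeleton of the paper's proof; where you differ is in how that auxiliary inequality is established. The paper proves $m(G)+\varepsilon(G)\geq\alpha(G)$ for an \emph{arbitrary} graph by a case analysis on connectivity: for a connected component with at least one edge it uses $m(H)\geq n(H)-1\geq\alpha(H)$, and for isolated vertices it uses $\varepsilon(K_{1})=\alpha(K_{1})=1$, then sums over components. You instead use the critical-difference identity $\alpha(G)-\mu(G)=d(G)$ from Theorem \ref{th2}\emph{(iii)} together with the fact that $\ker(G)$ is critical (Theorem \ref{th4}\emph{(iii)}), so that $d(G)=\varepsilon(G)-\left\vert N(\ker(G))\right\vert\leq\varepsilon(G)$, and then $\alpha(G)\leq\mu(G)+\varepsilon(G)\leq m(G)+\varepsilon(G)$. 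Your route is cleaner (no case analysis, and it gives the stronger intermediate bound $\alpha(G)\leq\mu(G)+\varepsilon(G)$ rather than just $\alpha(G)\leq m(G)+\varepsilon(G)$), at the cost of invoking the K\"{o}nig-Egerv\'{a}ry hypothesis for a step where the paper's argument needs no hypothesis at all; if you wanted full generality you could replace Theorem \ref{th2}\emph{(iii)} by Lorentzen's inequality $d(G)\geq\alpha(G)-\mu(G)$ (Corollary \ref{cor1}), which holds for every graph and yields the same chain. Your closing remark about preferring $\ker(G)$ over $\mathrm{core}(G)$ as the witness for $d(G)$ is sound, though in a K\"{o}nig-Egerv\'{a}ry graph $\mathrm{core}(G)$ is also critical by Theorem \ref{th715}\emph{(iv)} and Theorem \ref{th4}\emph{(ii)}, so either choice works, as you note.
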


\begin{proof}
First, let us prove that $m\left(  G\right)  +\varepsilon\left(  G\right)
\geq\alpha\left(  G\right)  $.

\textit{Case 1}. $G$ is connected and $m\left(  G\right)  \neq0$.
\[
m\left(  G\right)  +\varepsilon\left(  G\right)  \geq n\left(  G\right)
-1\geq\alpha\left(  G\right)  ,
\]
because if $\alpha\left(  G\right)  =n\left(  G\right)  $, then $m\left(
G\right)  =0$.

\textit{Case 2}. $G$ is disconnected. If a connected components of $G$, say
$H$, has non-empty set of edges, then, by Case 1, $m\left(  H\right)
+\varepsilon\left(  H\right)  \geq\alpha\left(  H\right)  $. Otherwise,
$H=K_{1}$. Hence, $\varepsilon\left(  H\right)  =\alpha\left(  H\right)  =1$.
Thus, $m\left(  H\right)  +\varepsilon\left(  H\right)  \geq\alpha\left(
H\right)  $ as well. Therefore, the graph $G$ itself satisfies the inequality
$m\left(  G\right)  +\varepsilon\left(  G\right)  \geq\alpha\left(  G\right)
$.

Second, by Proposition \ref{prop3}, we obtain%
\[
m\left(  G\right)  -\xi\left(  G\right)  +\varepsilon\left(  G\right)
\geq\alpha\left(  G\right)  -\xi\left(  G\right)  \geq\eta\left(  G\right)  ,
\]
which completes the proof.
\end{proof}

Theorem \ref{prop1}\emph{(v)} and Corollary \ref{cor116} imply the following.

\begin{corollary}
If $G$ is a K\"{o}nig-Egerv\'{a}ry graph, then $\varrho_{e}\left(  G\right)
\geq\eta\left(  G\right)  $.
\end{corollary}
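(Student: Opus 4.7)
The plan is to prove the inequality $\varrho_{e}(G) \geq \eta(G)$ by simply chaining together the two preceding results. Specifically, Theorem \ref{prop1}\emph{(v)} supplies the lower bound $\varrho_{e}(G) \geq m(G) - \xi(G) + \varepsilon(G)$, and Corollary \ref{cor116} supplies $m(G) - \xi(G) + \varepsilon(G) \geq \eta(G)$. Transitivity immediately yields the desired inequality, so the proof should consist of one display of inequalities with citations to these two statements.

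In more detail, I would write: by Theorem \ref{prop1}\emph{(v)}, $\varrho_{e}(G) \geq m(G) - \xi(G) + \varepsilon(G)$, since $G$ is assumed to be K\"{o}nig-Egerv\'{a}ry. Then, also because $G$ is K\"{o}nig-Egerv\'{a}ry, Corollary \ref{cor116} gives $m(G) - \xi(G) + \varepsilon(G) \geq \eta(G)$. Combining these two inequalities, we obtain
\[
\varrho_{e}(G) \;\geq\; m(G) - \xi(G) + \varepsilon(G) \;\geq\; \eta(G),
\]
which is exactly the required bound.

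There is essentially no obstacle in this step since all the heavy lifting was done in Theorem \ref{prop1}\emph{(v)} (which accounts for the $\alpha$-critical edges outside $(\mathrm{core}(G),N(\mathrm{core}(G)))$ via Theorem \ref{cor999} and Proposition \ref{prop15}, and controls the $\mu$-critical edges via Parts \emph{(i)--(iv)}) and in Corollary \ref{cor116} (which uses Proposition \ref{prop3} together with the elementary inequality $m(G) + \varepsilon(G) \geq \alpha(G)$). Thus the proof is one line, and no further case analysis, matching argument, or structural decomposition is needed.
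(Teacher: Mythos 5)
Your proposal matches the paper exactly: the paper states that Theorem \ref{prop1}\emph{(v)} and Corollary \ref{cor116} imply this corollary, which is precisely the two-inequality chain $\varrho_{e}(G) \geq m(G)-\xi(G)+\varepsilon(G) \geq \eta(G)$ you give. The argument is correct and complete.
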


\section{Conclusions}

Our main findings, Theorem \ref{th9}\emph{(iv) }and Theorem \ref{prop1}%
\emph{(v)}, motivate the following.

\begin{problem}
Express $\varrho_{e}\left(  G\right)  $ using various graph invariants of
K\"{o}nig-Egerv\'{a}ry graphs.
\end{problem}

\begin{problem}
Characterize non-K\"{o}nig-Egerv\'{a}ry \textit{graphs satisfying;}

\begin{itemize}
\item $\varrho_{e}\left(  G\right)  \geq m\left(  G\right)  -\xi\left(
G\right)  +\varepsilon\left(  G\right)  $;

\item $\varrho_{v}\left(  G\right)  =n\left(  G\right)  -\xi\left(  G\right)
+\varepsilon\left(  G\right)  $;

\item $\varrho_{v}\left(  G\right)  =\alpha\left(  G\right)  +\mu\left(
G\right)  -\xi\left(  G\right)  +\varepsilon\left(  G\right)  $.
\end{itemize}
\end{problem}

\begin{figure}[h]
\setlength{\unitlength}{1cm}\begin{picture}(5,1.5)\thicklines
\multiput(3,0.5)(1,0){3}{\circle*{0.29}}
\multiput(3,1.5)(1,0){3}{\circle*{0.29}}
\put(3,0.5){\line(0,1){1}}
\put(3,1.5){\line(2,-1){2}}
\put(3,0.5){\line(1,0){2}}
\put(3,0.5){\line(1,1){1}}
\put(4,0.5){\line(1,1){1}}
\put(4,0.5){\line(0,1){1}}
\put(5,0.5){\line(0,1){1}}
\qbezier(3,0.5)(4,-0.3)(5,0.5)
\put(2.65,1.5){\makebox(0,0){$x$}}
\put(4.35,1.5){\makebox(0,0){$y$}}
\put(5.35,1.5){\makebox(0,0){$z$}}
\put(2,1){\makebox(0,0){$G_{1}$}}
\multiput(8,0.5)(1,0){3}{\circle*{0.29}}
\multiput(8,1.5)(1,0){4}{\circle*{0.29}}
\put(10,0.5){\line(1,1){1}}
\put(8,0.5){\line(0,1){1}}
\put(8,1.5){\line(2,-1){2}}
\put(8,0.5){\line(1,0){2}}
\put(8,0.5){\line(1,1){1}}
\put(9,0.5){\line(1,1){1}}
\put(9,0.5){\line(0,1){1}}
\put(10,0.5){\line(0,1){1}}
\qbezier(8,0.5)(9,-0.3)(10,0.5)
\put(7.65,1.5){\makebox(0,0){$t$}}
\put(9.35,1.5){\makebox(0,0){$u$}}
\put(10.35,1.5){\makebox(0,0){$v$}}
\put(11.35,1.5){\makebox(0,0){$w$}}
\put(7,1){\makebox(0,0){$G_{2}$}}
\end{picture}\caption{$\ker(G_{1})=\emptyset\subset\mathrm{core}%
(G_{1})=\left\{  x,y,z\right\}  $, while $\ker(G_{2})=\mathrm{core}%
(G_{2})=\left\{  t,u,v,w\right\}  $.}%
\label{fig88}%
\end{figure}

Using Theorem \ref{th9}\emph{(iv)} and Corollary \ref{cor88}, we may conclude
with the following.

\begin{theorem}
\label{Theorem100}If $G$ is a K\"{o}nig-Egerv\'{a}ry graph, then:

\emph{(i)} $\varrho_{v}\left(  G\right)  =n\left(  G\right)  $ if and only if
$\mathrm{core}\left(  G\right)  =\ker(G)$;

(\emph{ii}) if $\mathrm{core}\left(  G\right)  =\ker(G)$, then $\varrho
_{e}\left(  G\right)  =m\left(  G\right)  $.
\end{theorem}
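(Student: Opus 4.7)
The plan is to reduce both parts directly to results already proved in the paper. For part \emph{(i)}, I would start from Theorem \ref{th9}\emph{(iv)}, which gives the exact formula
\[
\varrho_v(G) = n(G) - \xi(G) + \varepsilon(G)
\]
for every K\"{o}nig-Egerv\'{a}ry graph $G$. Hence $\varrho_v(G) = n(G)$ is equivalent to $\xi(G) = \varepsilon(G)$, i.e.\ to $|\mathrm{core}(G)| = |\mathrm{\ker}(G)|$. The next step is to upgrade this numerical equality to a set-theoretic one by invoking the containment $\mathrm{\ker}(G) \subseteq \mathrm{core}(G)$ stated in Theorem \ref{th4}\emph{(i)}; since both sets are finite, equal cardinalities together with an inclusion force equality of the sets themselves. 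This gives $\mathrm{core}(G) = \mathrm{\ker}(G)$, and the converse direction runs in reverse along the same chain of equivalences.

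For part \emph{(ii)}, the statement is literally the content of Corollary \ref{cor88}, so I would simply cite it: under the hypothesis $\mathrm{core}(G) = \mathrm{\ker}(G)$ we have $\xi(G) = \varepsilon(G)$, whence $m(G) - \xi(G) + \varepsilon(G) = m(G)$, and combined with the lower bound $\varrho_e(G) \geq m(G) - \xi(G) + \varepsilon(G)$ from Theorem \ref{prop1}\emph{(v)} (together with the trivial upper bound $\varrho_e(G) \leq m(G)$) we obtain $\varrho_e(G) = m(G)$.

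There is essentially no obstacle here, since the heavy lifting has been done in Theorem \ref{th9}\emph{(iv)}, Theorem \ref{prop1}\emph{(v)} and Corollary \ref{cor88}; the only conceptual point to flag is the passage from the cardinality equality $\xi(G)=\varepsilon(G)$ to the set equality $\mathrm{core}(G) = \mathrm{\ker}(G)$, which uses the previously established inclusion $\mathrm{\ker}(G) \subseteq \mathrm{core}(G)$. It is also worth noting that the converse of part \emph{(ii)} is not claimed, and indeed the graph $K_4 - e$ (mentioned after Corollary \ref{cor88}) shows that $\varrho_e(G) = m(G)$ can hold without $\mathrm{core}(G) = \mathrm{\ker}(G)$, so no attempt should be made to strengthen \emph{(ii)} to an equivalence.
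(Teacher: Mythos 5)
Your proposal is correct and matches the paper's own (implicit) argument: the paper derives part \emph{(i)} from Theorem \ref{th9}\emph{(iv)} and part \emph{(ii)} from Corollary \ref{cor88}, exactly as you do. Your only addition is to make explicit the step from the cardinality equality $\xi(G)=\varepsilon(G)$ to the set equality $\mathrm{core}(G)=\ker(G)$ via the inclusion $\ker(G)\subseteq\mathrm{core}(G)$ of Theorem \ref{th4}\emph{(i)}, which the paper leaves tacit.
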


The converse of Theorem \ref{Theorem100}(\emph{ii}) is not true. For example,
consider the K\"{o}nig-Egerv\'{a}ry graph $G_{1}$ from Figure \ref{fig88}:
$\varrho_{e}\left(  G_{1}\right)  =m\left(  G_{1}\right)  $ and $\mathrm{core}%
\left(  G_{1}\right)  \neq\ker(G_{1})$. Nevertheless, there are
K\"{o}nig-Egerv\'{a}ry graphs like $G_{2}$ from Figure \ref{fig88}, which
enjoy both equalities $\varrho_{e}\left(  G_{2}\right)  =m\left(
G_{2}\right)  $ and $\mathrm{core}\left(  G_{2}\right)  =\ker(G_{2})$.

On the other hand, consider the unicyclic K\"{o}nig-Egerv\'{a}ry graphs from
Figure \ref{fig2}: both $G_{1}-v$ and $G_{1}-uv$ are not
K\"{o}nig-Egerv\'{a}ry graphs, while $\varrho_{v}\left(  G_{2}\right)
=n\left(  G_{2}\right)  $ and $\varrho_{e}\left(  G_{2}\right)  =m\left(
G_{2}\right)  $.

It motivates the following.

\begin{problem}
Characterize K\"{o}nig-Egerv\'{a}ry \textit{graphs, where }$\varrho_{e}\left(
G\right)  =m\left(  G\right)  $\textit{ implies }$\mathrm{core}\left(
G\right)  =\ker(G)$.
\end{problem}

\begin{figure}[h]
\setlength{\unitlength}{1cm}\begin{picture}(5,1.2)\thicklines
\multiput(3,0)(1,0){4}{\circle*{0.29}}
\multiput(4,1)(1,0){3}{\circle*{0.29}}
\put(3,0){\line(1,0){3}}
\put(3,0){\line(1,1){1}}
\put(4,0){\line(0,1){1}}
\put(6,0){\line(0,1){1}}
\put(5,1){\line(1,-1){1}}
\put(4.65,1){\makebox(0,0){$x$}}
\put(5.1,0.3){\makebox(0,0){$v$}}
\put(6.35,1){\makebox(0,0){$y$}}
\put(4.3,0.3){\makebox(0,0){$u$}}
\put(2.3,0.5){\makebox(0,0){$G_{1}$}}
\multiput(8,0)(1,0){3}{\circle*{0.29}}
\multiput(8,1)(2,0){2}{\circle*{0.29}}
\put(8,0){\line(0,1){1}}
\put(8,1){\line(1,-1){1}}
\put(8,0){\line(1,0){2}}
\put(9,0){\line(1,1){1}}
\put(10.35,1){\makebox(0,0){$a$}}
\put(10.35,0){\makebox(0,0){$b$}}
\put(7.3,0.5){\makebox(0,0){$G_{2}$}}
\end{picture}\caption{ $\ker(G_{1})=\left\{  x,y\right\}  \subset
\mathrm{core}(G_{1})=\left\{  x,y,v\right\}  $, while $\ker(G_{2}%
)=\mathrm{core}(G_{2})=\left\{  a,b\right\}  $.}%
\label{fig2}%
\end{figure}

\textbf{Data Availability Statement} On behalf of all authors, the
corresponding author states that the manuscript has no associated
data.\medskip

\textbf{Declarations\medskip}

\textbf{Conflict of Interest} On behalf of all authors, the corresponding
author states that there is no conflict of interest.

\end{document}